\title{Toposes with enough points as categories of étale spaces}
\author{Sam van~Gool, Jérémie Marquès and Umberto Tarantino}
\date{\today}
\begin{document}
\maketitle

\begin{abstract}
    We extend Makkai duality between coherent toposes and ultracategories to a duality between toposes with enough points and ultraconvergence spaces. Our proof generalizes and simplifies Makkai's original proof. Our main result can also be seen as an extension to ionads of Barr's equivalence between topological spaces and relational modules for the ultrafilter monad. In view of the correspondence between toposes and geometric theories, we obtain a strong conceptual completeness theorem, in the sense of Makkai, for geometric theories with enough Set-models. The same result has recently been obtained independently by Saadia (\href{https://arxiv.org/abs/2506.23935}{arXiv:2506.23935}) and by Hamad (\href{https://arxiv.org/abs/2507.07922}{arXiv:2507.07922}). Both of their proofs rely on groupoid representations of toposes, which our proof here does not assume.  
\end{abstract}

\tableofcontents

\section{Introduction}\label{introduction}
A fundamental idea in Stone duality theory~\cite{Stone1937} is that a topological space may be
studied via its open subsets, which have the algebraic structure of a \emph{frame}:
a partially ordered set which admits finite infima and arbitrary suprema satisfying a distributive law.
Similarly, sheaves of sets on a topological space have the
algebraic-categorical structure of a (Grothendieck) \emph{topos}: a category admitting
finite limits and arbitrary colimits satisfying some exactness properties. This paper is a
contribution to the study of extensions of Stone duality from the
\emph{propositional} setting of frames to the \emph{first-order} setting of
toposes.

An abstract justification for the study of topology via frames lies in the fact
that the contravariant functor $\mc O \colon \cat{Top}^{\op} \to \cat{Frm}$
mapping a topological space to its frame of opens is part of an adjunction~\cite{PapertPapert1957,Isbell1972}: its
left adjoint sends a frame $F$ to the set $X$ of frame homomorphisms from $F$
to the two-element frame $\two$, suggestively called \emph{points} of $F$. Any
element $\phi$ of $F$ gives an \emph{evaluation} function $\sem{\phi} \colon X
\to \two$, sending a point $x$ to $x(\phi)$, and these maps determine a frame
homomorphism $\sem{-} \colon F\to \two^X$ whose image $\Oc (X)$ defines a topology
on $X$. The surjection $F \onto \mc O(X)$ is then the unit of the adjunction:
the frame $F$ is called \emph{spatial} if this unit is injective, and thus an
isomorphism.

A similar story can be told for toposes, where the topos $\Set$ plays the role
of the frame $\two$. A \emph{point} of a topos $\mc E$ is a functor $\mc E \to \Set$ 
which preserves finite limits and small colimits. 
For a class $X$ of points of $\mc E$, any object $\phi$ of $\mc E$ gives an 
{evaluation} map $\sem\phi \colon X \to \Set$, sending a point $x$ to
$x(\phi)$, and these maps define a functor $\sem{-} \colon \mc E \to \Set^X$.
In analogy with topological spaces, a \emph{bounded ionad} \cite{garner-ionads} is a pair $(X, \Ec)$ where $X$ is a separating class of points of a topos $\Ec$, which means that the functor $\sem{-} \colon \Ec \to \Set^X$ is conservative, i.e., reflects isomorphisms. %
When a class of points $X$ such that $\sem{-}$ is conservative exists, the topos $\Ec$ is said to \emph{have enough points}. This is the topos-theoretic analogue of spatiality for frames. From a logical point of view, toposes with enough points correspond to geometric theories enjoying a completeness theorem with respect to their class of $\cSet$-models.

In this paper, we will equip a separating class of points of a topos with enough additional structure so that the topos can be reconstructed from it. In the propositional setting, Barr~\cite{barr} showed that a topology on a set $X$ can be reconstructed from a convergence relation between $X$ and the ultrafilters on $X$. 
In the first-order setting, Makkai~\cite{makkai} showed that \emph{ultraproducts} may be used to obtain such a reconstruction result for pretoposes, and this was later extended to coherent toposes by Lurie \cite{lurie}.
For an $I$-indexed family of sets $(S_i)_{i \in I}$
and an ultrafilter $\mu$ on $I$, we write $\prod_{i \to \mu} S_i$ for the
ultraproduct of the family with respect to $\mu$; we call its elements
\emph{ultrafamilies indexed by $\mu$}, or \emph{$\mu$-families}, and we denote them as $(s_i)_{i \to \mu}$. 
The ultraproduct construction yields a functor $\prod_{i\to \mu} \colon \Set^I
\to \Set$: the model-theoretical {\L}o{\'s}'s theorem, transported to this
context, says that the ultraproduct functor is \emph{coherent}: that is, it preserves finite limits, regular epimorphisms, and finite unions of subobjects. Thus, if $(x_i)_{i \in I}$ is an $I$-indexed family of coherent functors
from a coherent category $\mc C$ to $\Set$, then the composite functor 
\[\prod_{i \to \mu} x_i \colon \begin{tikzcd}[sep = large]
	{\mc C} & {\Set^I} & \Set
	\arrow["{\braket{x_i}_{i\in I}}", from=1-1, to=1-2]
	\arrow["{\prod_{i\to\mu}}", from=1-2, to=1-3]
\end{tikzcd}\]
is coherent. 

\looseness=-1
To make the link with toposes, recall that we may think of the points of a topos $\mc E$ as the models of a logical theory classified by $\mc E$. If this theory is \emph{coherent}, then {\L}o{\'s}'s theorem gives that the ultraproduct of any family of points of $\mc E$ is again a point of $\mc E$. Therefore, the class of points of a coherent topos can be equipped with the structure of `taking ultraproducts', making it an \emph{ultracategory}.
An underlying idea here
is that ultraproducts are analogous to limits in topology: as shown in~\cite{lurie}, ultracategories with a
small set of objects and a discrete underlying category coincide with compact Hausdorff spaces, or, in light of Manes'
theorem~\cite{manes-comphaus}, algebras for the ultrafilter monad $\beta$ on $\Set$. The main results of~\cite{makkai, lurie} in this context are reconstruction results for pretoposes and coherent toposes, representing them as categories of functors to $\Set$ that preserve ultraproducts in an appropriate sense.

To generalize from coherent toposes to arbitrary toposes or, in logical terms, from coherent to \emph{geometric} theories, the main complication is that the ultraproduct of a
set of points is not necessarily a point itself. We can see this already at the propositional level, simply because an ultrafamily of points in a topological space may fail to have a unique limit.
To overcome this, given a class $X$ of points of a topos $\Ec$, we will consider the structure defined by formal ``maps into an ultraproduct'', even when the ultraproduct is not a point of $\Ec$. More precisely, for $x \in
X$ and $(x_i)_{i \to \mu}$ a $\mu$-family of points in $X$, we will consider natural transformations from the point
$x$ to the functor $\prod_{i \to \mu} x_i \colon \Ec \to \Set$, and denote the set
of these by $\Hom_{\ult}(x, (x_i)_{i \to \mu})$. We call this the
\emph{canonical ultraconvergence structure} on the class $X$, and we will refer to elements of
$\Hom_{\ult}(x, (x_i)_{i \to \mu})$ as \emph{ultra-arrows}.

In the propositional case, if $X$ is the set of points of a frame $F$, this
canonical ultraconvergence structure is just an encoding of the topology on $X$ described above.  Indeed, the assignment $\Hom_{\ult}$ then takes values in $\two$, and
may thus be seen as a relation between points in $X$ and ultrafamilies in $X$, where a point $x$ is related to a $\mu$-family of points $(x_i)_{i
\to \mu}$ if, and only if, every open neighborhood of $x$ also contains a $\mu$-large subset of the family $(x_i)_{i
\to \mu}$.

Coming back to the setting of a general topos $\mc E$ and a class $X$ of points
of $\mc E$ endowed with the canonical ultraconvergence structure, the evaluation map
$\sem\phi \colon X \to \Set$ associated to an object $\phi$
of $\mc E$ now has a natural action on ultra-arrows: given an
ultra-arrow $r$ from a point $x$ to a $\mu$-family $(x_i)_{i \to \mu}$, 
its component at $\phi$ gives
a function $r_{\phi} \colon x(\phi) \to \prod_{i \to \mu} x_i(\phi)$, that is,
an ultra-arrow for the ultraconvergence structure on the class $\Set$.
We call a map $X \to \Set$ equipped with such an action on ultra-arrows \emph{continuous}. 
This assignment defines a functor $\eval{-}$ from $\Ec$ to the category $\mathbf{C}(X,\Set)$ of continuous maps $X \to \Set$, 
which plays the same role as the surjection $F \onto \Oc(X)$ in the propositional case.
The main result of this paper, \Cref{thm:main-theorem} below, is that $\eval{-}$ is always an equivalence for toposes with enough points.

\begin{restatable}{theorem}{mainresult}\label{thm:main-theorem}
    Let $\mc{E}$ be a topos and let $X$ be a separating set of points of $\mc{E}$, equipped with the canonical ultraconvergence structure. Then $\eval{-}$ is an equivalence between $\mc{E}$ and the category of continuous maps from $X$ to $\Set$. 
\end{restatable}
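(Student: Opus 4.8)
The plan is to prove that $\eval{-}$ is fully faithful and essentially surjective. Faithfulness is essentially free from the hypothesis. Write $U \colon \mathbf{C}(X,\Set) \to \Set^X$ for the forgetful functor, so that $\sem{-} = U \circ \eval{-}$; since $U$ is faithful, it suffices to see that $\sem{-}$ is. Now $\sem{-}$ is computed pointwise by the points, which preserve finite limits, so $\sem{-}$ is left exact; and it is conservative by the separation hypothesis. Given parallel maps $f,g\colon\phi\to\psi$ with $\sem{f}=\sem{g}$, the functor $\sem{-}$ carries their equalizer $e$ to the equalizer of $\sem{f}=\sem{g}$, which is an isomorphism; conservativity forces $e$ to be an isomorphism, whence $f=g$. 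Faithfulness thus already exhibits the role of conservativity, but for full faithfulness together with essential surjectivity I would use a single comonadic descent argument along the canonical surjection associated to $X$.

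Since each point preserves small colimits, $\sem{-}$ preserves all small colimits and finite limits; by local presentability it has a right adjoint $R$, so $\sem{-}$ is the inverse image of a geometric morphism $p\colon\Set^X\to\mc{E}$, and conservativity says that $p$ is a surjection. By the comonadicity of geometric surjections, $\mc{E}$ is then equivalent to the category of coalgebras for the comonad $\mathbf{G}=\sem{-}\circ R$ on $\Set^X$, with $\sem{-}$ identified with the forgetful functor. The right adjoint is computed as $R\big((S_x)_x\big)=\prod_{x\in X}x_*(S_x)$, where $x_*\colon\Set\to\mc{E}$ is the right adjoint of the point $x$; hence $\mathbf{G}\big((S_x)_x\big)_y = y\big(\prod_{x\in X}x_*(S_x)\big)$, writing $y(-)$ for the inverse-image functor of the point $y$. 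The decisive observation is that $y(-)$, being left exact but not continuous, does not preserve this large product on the nose, and that the discrepancy is measured exactly by ultraproducts: I expect a coalgebra structure map $M_y\to y\big(\prod_x x_*(M_x)\big)$ to unwind, via {\L}o{\'s}'s theorem, into the assignment to each ultra-arrow $r\colon y\to (x_i)_{i\to\mu}$ of a map $M_y\to\prod_{i\to\mu}M_{x_i}$. Matching the counit and coassociativity laws with the functoriality axioms defining $\mathbf{C}(X,\Set)$ would then identify $\mathbf{G}$-coalgebras with continuous maps, and tracing the comparison functor through this identification would show that it is $\eval{-}$, giving the theorem in one stroke.

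I expect this identification to be the main obstacle, and it is genuinely the heart of the matter rather than a formality. One must prove that an inverse-image functor $y(-)$ interacts with products of the shape $\prod_x x_*(S_x)$ \emph{precisely} through ultra-arrows out of $y$—nothing more and nothing less—so that the natural comparison reorganizes exactly into the canonical ultraconvergence data. This is the categorified analogue of Barr's reconstruction of a topology from its ultrafilter-convergence relation, and it is the step at which the passage from the propositional, $\two$-valued setting to the first-order, $\Set$-valued setting becomes substantive: in the $\two$-valued case the product collapses to a condition on neighbourhoods, whereas here one must track the full $\Set$-valued action and verify its coherence. A secondary, more bookkeeping, task is to confirm that the comparison functor agrees with $\eval{-}$ up to coherent isomorphism. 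Granting these, $\eval{-}$ is an equivalence; the statement that every continuous subobject of $\eval{\chi}$ arises from a genuine subobject of $\chi$—which would suffice for fullness on its own, via the functional-relation description of morphisms—then appears merely as the special case of the equivalence applied to subobjects of $\eval{\chi}$.
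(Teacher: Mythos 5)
Your opening moves are correct: the faithfulness argument (equalizers plus conservativity) works, and the comonadic reduction is legitimate. Indeed $\sem{-}$ preserves finite limits and small colimits, so it is the inverse image of the canonical geometric morphism $p \colon \Set^X \to \Ec$ (as noted in Section~\ref{sec:cat-prelim}); conservativity makes $p$ a surjection, and surjections have comonadic inverse images, so $\Ec$ is equivalent to the category of $\mathbf{G}$-coalgebras on $\Set^X$ for $\mathbf{G} = \sem{-} \circ R$, compatibly with the forgetful functors, and your formula $R\bigl((S_x)_{x}\bigr) = \prod_{x \in X} x_*(S_x)$ is right. This is a genuinely different decomposition from the paper's, which follows Makkai's scheme: prove that $\eval{-}$ is full on subobjects (Proposition~\ref{prop:step-1}) and covering (Proposition~\ref{prop:step-2}), then conclude by a standard argument for conservative infinitary-pretopos morphisms.

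The gap sits exactly at the step you defer: the identification of $\mathbf{G}$-coalgebras with continuous maps $X \to \Set$ is stated as an expectation (``I expect \ldots would then identify \ldots'') and never proved, and it is not something {\L}o{\'s}'s theorem can deliver. {\L}o{\'s}'s theorem only says that the ultraproduct functors $\prod_{i\to\mu} \colon \Set^I \to \Set$ are pretopos morphisms --- that is what makes the canonical ultraconvergence structure well defined in the first place; it says nothing about the value $y\bigl(\prod_{x} x_*(M_x)\bigr)$, which is computed inside $\Ec$ and is not a priori expressible through ultra-arrows. Already in the topological case this object is the stalk $\colim_{U \ni y} \prod_{x \in U} M_x$, a reduced product over the neighborhood filter of $y$: pushing a germ into the ultraproducts $\prod_{i\to\mu} M_{x_i}$ along convergent ultrafamilies gives the easy direction (coalgebra structure induces continuity structure), but the converse --- reconstructing an element of this colimit from the ultraproduct data, and matching coassociativity against composition of ultra-arrows, where $\mathbf{G}^2$ involves $y\bigl(\prod_x x_*(x(\prod_{x'} x'_*(M_{x'})))\bigr)$ and compounds the problem --- is an existence statement of compactness type that is essentially equivalent to the essential surjectivity you are trying to prove. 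This is precisely where the paper does its real work: Lemmas~\ref{lem:every-open-is-eval} and~\ref{lem:local-injective-pullback} manufacture ultra-arrows out of a point by combining the cofiltered category of elements $\int \restr{x}{\mc{S}}$ (Lemma~\ref{lem:cofiltered-category-of-elements}), an initial functor from a cofiltered poset (Lemma~\ref{lem:initial-functor-cofiltered}), and an ultrafilter chosen to contain all principal down-sets. Your proposal has no counterpart to this construction, so the ``decisive observation'' is at present a restatement of the theorem rather than a step toward it.
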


\looseness=-1
Theorem~\ref{thm:main-theorem} builds on and generalizes work of \cite{makkai, lurie}, and it was also recently obtained independently in~\cite{saadia2025} and in~\cite{hamad2025}; we will review related work in more detail below. The aim of this paper is to give an elementary and self-contained proof of the
theorem. 
The key novelty of our approach lies in its use of \emph{étale maps}
of ultraconvergence spaces (Definition~\ref{def:etale-map}), extending the notion of étale map from the
topological context.
As we will establish in Theorem~\ref{thm:etale-spaces-as-continuous-maps}, 
by extending the usual correspondence
between discrete opfibrations and copresheaves, étale maps over an ultraconvergence
space $X$ are equivalent to the continuous maps from $X$ to $\Set$.
Post-composing the evaluation functor $\sem{-}$ with this equivalence, every
object $\phi$ of $\Ec$ gives rise to an étale space $\sem{\phi} \to X$, whose
fiber at a point $x \in X$ is the set $x(\phi)$:
\[ \begin{tikzcd}
	& \Ec \\
	{\mathbf{C}(X,\Set)} && {\Et(X).}
	\arrow["{\eval{-}}"', from=1-2, to=2-1]
	\arrow["{\eval{-}}", from=1-2, to=2-3]
	\arrow["\simeq", from=2-1, to=2-3]
\end{tikzcd} \]
We will prove
that the functor $\eval{-}\colon \Ec\to\Et(X)$ is an
equivalence, which then also gives the main theorem in light of Theorem
\ref{thm:etale-spaces-as-continuous-maps}.

\paragraph{Related work.}
An old idea in topology is that a space can be encoded as a convergence relation between points and generalized sequences \cite[Ch.~2, Thm.~9]{Kelley1955}. Building on that idea, Barr~\cite{barr} showed that a topology on a set $S$ can equivalently be described as the structure of a
\emph{relational $\beta$-module} on $S$, that is, a suitable relation between
points of $S$ and ultrafilters on $S$. We will see in Remark \ref{rem:top-sp-from-ultraconv-structure}
how our ultraconvergence relation, in the topological case, restricts to Barr's
description.

The main idea of our \emph{ultraconvergence spaces} is to replace Barr's
two-valued relation, between points of a space and ultrafilters on the space,
with a $\Set$-valued `relation', between points of a space and
\emph{ultrafamilies} in the space. The same intuition is present in Saadia's
\emph{virtual ultracategories}~\cite{saadia2025} and Hamad's \emph{generalized
ultracategories}~\cite{hamad2025}, two recent, independent works on the same
topic. 
Much earlier, Burroni \cite{burroni} already attempted to categorify Barr's
ultraconvergence relation, which led to a definition of \emph{$T$-categories}
for a monad $T$. For $T$ the \emph{ultrafilter monad} $\beta$ on $\Set$, he
dubbed $T$-categories \emph{hypertopologies}, and recovered Barr's relational
$\beta$-modules as the `posetal' case. Other, related approaches are
\cite{clementino03, clementino04}, which were later put in the framework of Cruttwell and Shulman's
\emph{generalized multicategories} \cite{cruttwell2010}. 
The connection with \cite{cruttwell2010}, and hence implicitly with Burroni's
$T$-categories, is also made in~\cite{saadia2025,hamad2025}.

A different but related direction in categorifying topological spaces is given
by Garner's \emph{ionads} \cite{garner-ionads}, later revisited by Di Liberti
\cite{diliberti22,diliberti24}, who showed an equivalence between the
2-category of toposes with enough points and that of sober (bounded) ionads. In
this paper, we typically consider a topos equipped with a choice of separating
set of points, which is an equivalent way of presenting a bounded ionad (cf.\
\cite[Rem.~2.5]{garner-ionads}). We leave the precise connection to ionads
implicit.

Our main theorem was proved around the same time in both \cite{saadia2025} and
\cite{hamad2025}, but with rather different techniques. The proofs in \cite{saadia2025,hamad2025} make essential use of Butz and Moerdijk's
 representation theorem for toposes with enough points as
categories of equivariant sheaves over topological groupoids constructed from
their points (\cite{butz-moerdijk}, see also \cite{joyal-tierney, awodey-forsell,
wrigley-topologicalgroupoids}). Our work here, on the other hand, does not rely on  
such a representation of the topos $\Ec$. 
A corollary of our main theorem is a different representation theorem for
toposes with enough points, independent of Butz and Moerdijk's: any topos with
enough points $\Ec$ is equivalent to the category $\Et(\pt(\Ec))$ of étale
spaces over $\pt(\Ec)$ endowed with the canonical ultraconvergence structure
(Corollary~\ref{cor:reconstruction}).

\paragraph{Outline.}\label{par:outline} We first recall some category-theoretic
preliminaries in Section~\ref{sec:cat-prelim}. In Section~\ref{sec:ultraconvergence-spaces}, 
we introduce the 2-category of \emph{ultraconvergence
spaces} (Definitions~\ref{dfn:uc-space}~and~\ref{def:continuous-maps}), 
which categorifies the description of a
topological space as a relational $\beta$-algebra.
In Section~\ref{sec:etale-spaces} 
we introduce the main conceptual novelty of this paper, namely a
notion of \emph{étale map} of ultraconvergence spaces (Definition~\ref{def:etale-map}). 
We prove in Section~\ref{subsec:etale-general} a number of properties of these maps, 
including the fact that they correspond to local homeomorphisms in the topological case
(Corollary \ref{cor:étale-is-local-homeo}). 
We then show in Section~\ref{subsec:grothendieck} that for each
ultraconvergence space $X$, the category of continuous maps
$X\to\Set$ is equivalent to the category $\Et(X)$ of étale spaces over $X$
(Theorem~\ref{thm:etale-spaces-as-continuous-maps}), extending the usual
correspondence between copresheaves and discrete opfibrations. We show in Section~\ref{subsec:etale-pretopos} that $\Et(X)$ is an infinitary pretopos.
Section~\ref{sec:main} contains the core of the paper. 
We first describe the canonical ultraconvergence structure
on a class of points $X$ of a topos $\Ec$,
and we then prove that the functor $\eval{-}
\colon \Ec \to \Et(X)$ is an equivalence of categories if $X$ is a separating set.
This proof is divided into two steps: $\eval{-}$ is full on subobjects (Proposition~\ref{prop:step-1}) and covering (Proposition~\ref{prop:step-2}).
As we will see with a simple argument, this is sufficient to ensure that $\eval{-} \colon \Ec \to \Et(X)$ is an equivalence of categories. Finally, in Section \ref{sec:conclusions}, we derive the representation theorem for toposes with enough points (Corollary~\ref{cor:reconstruction}) and make some concluding remarks.

\paragraph{Acknowledgements.} SvG and UT acknowledge financial support from the Agence Nationale de la Recherche (ANR), project ANR-23-CE48-0012-01. The authors thank Gabriel Saadia, Joshua Wrigley, and Errol Yuksel for useful discussions on the topic of this paper; see also Remark~\ref{rem:completing-cospans-ack}. The authors also thank the anonymous reviewers of an earlier draft of this paper for their valuable feedback.

\section{Category-theoretic preliminaries}\label{sec:cat-prelim}
Throughout the paper, we work in a classical metatheory with the axiom of choice, and we fix a single set-theoretic universe. We use the following standard terminology: 
an element of the universe is called a \emph{set}, or sometimes a \emph{small set} for emphasis, while 
a \emph{proper class} or \emph{large set} is a collection of sets which is itself not a set; 
a \emph{class} is a small or large set. 
We allow for a \emph{function} or \emph{relation} to have any class as its domain or codomain. 
We denote by $\Set$ the category of small sets and functions, corresponding to the fixed universe. Any category we consider is tacitly assumed to be \emph{locally small}, meaning that the collection of arrows between two objects is a small set.
A category is called \emph{small} if its classes of objects and morphisms are small.
We denote by $\CAT$ the 2-category of categories, functors, and natural transformations.

We recall a number of topos-theoretic facts and notations that we will need; we follow the notation of~\cite{elephant}, unless explicitly mentioned otherwise. 
By \emph{topos} we will always mean \emph{Grothendieck topos}, i.e.\ a category
equivalent to that of sheaves on a small site $(\mc C, J)$; recall
that the category $\mc C$ can always be assumed to be finitely complete. By
Giraud's theorem, a topos can be characterized intrinsically as a category $\mc
E$ such that:
\begin{enumerate}
\item $\mc E$ admits finite limits;
\item $\mc E$ admits small coproducts, which are disjoint and stable under
      pullback;
\item every equivalence relation $\rho \rightarrowtail \phi \times \phi$ in $\mc E$
    is \emph{effective}, i.e.\ a kernel pair;
\item effective epimorphisms in $\mc E$ are stable under pullback;
\item there exists a \emph{generating set} $S$ in $\mc E$, meaning that
      parallel morphisms in $\mc E$ can be separated by morphisms sourced at objects
      in $S$.
\end{enumerate}

\begin{definition}
    Given two toposes $\mc E $ and $ \mc F$, a \emph{geometric morphism} $\mc E \to
    \mc F$ is an adjoint pair
    $f_* \colon \begin{tikzcd}[cramped]
        {\mc E} & {\mc F}
        \arrow[""{name=0, anchor=center, inner sep=0}, shift right=1.3, from=1-1, to=1-2]
        \arrow[""{name=1, anchor=center, inner sep=0}, shift right=1.7, from=1-2, to=1-1]
        \arrow["\dashv"{anchor=center, rotate=-90}, draw=none, from=1, to=0]
    \end{tikzcd} \cocolon {f^*}$
    whose left
    adjoint $f^* \colon \mc F \to \mc E$ preserves finite limits; we refer to $f^*$ 
    as the \emph{inverse image}, and to its right adjoint $f_*$ as the \emph{direct
    image}.    We denote by $\mathbf{Topos}$ the 2-category of toposes, geometric morphisms, and natural transformations between their inverse images.
\end{definition}

\begin{definition}
    A \emph{point} of a topos $\mc E$ is a geometric morphism $\Set \to \mc
    E$, which we will systematically identify with its inverse image $\mc E \to
    \Set$. We denote by $\pt (\Ec)$ the category of points of $\mc E$. 
\end{definition}

Any class $X$ of points of $\Ec$ induces a canonical geometric morphism $\Set^X \to \Ec$, whose inverse image is the evaluation functor $\eval{-} \colon \Ec \to \Set^X$.

\begin{definition}
    A class $X$ of points of $\Ec$ is called \emph{separating} if the functor $\eval{-}\colon\Ec \to \Set^X$ is conservative. If such a class $X$ exists, then we say that $\Ec$ has \emph{enough points}: in that case, $\Ec$ then admits in particular a separating small \emph{set} of points.
\end{definition}

\begin{definition}
    An \emph{infinitary pretopos} (called $\infty$-pretopos in~\cite{elephant}) is a category satisfying (1)-(4) of Giraud's axioms above. A \emph{pretopos} is a category satisfying axioms (1), (3), (4), and axiom (2) restricted to only \emph{finite} coproducts. We recall that, in a pretopos, every epimorphism is
effective. 
\end{definition}

\begin{definition}
    A  \emph{pretopos morphism} (resp. \emph{infinitary-pretopos morphism}) is a
functor between pretoposes (resp. infinitary pretoposes) which preserves finite
limits, finite coproducts (resp. small coproducts), and
epimorphisms. The inverse image of a geometric morphism of toposes is in particular
an infinitary-pretopos morphism. %
\end{definition}

By the definition in terms of sheaves on a site, a topos admits a small dense full subcategory which is
closed under finite limits. We will repeatedly use this fact, together with the following three lemmas, in the proof of our main theorem.
\begin{lemma}
\label{lem:extension-from-a-dense-subcategory}
Let $F , G \colon \mc C \to \mc D$ be a pair of functors and let
$i\colon \mc S \hookrightarrow \mc C$ be the inclusion of a dense full
subcategory. If $F$ preserves all small colimits existing in $\mc C$, then every natural transformation
$F \circ i \implies G \circ i$ admits a unique extension to a natural
transformation $F \implies G$.
\end{lemma}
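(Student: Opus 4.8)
The plan is to exploit the defining property of density. The inclusion $i \colon \mc S \hookrightarrow \mc C$ being dense means precisely that every object $C$ of $\mc C$ is exhibited as the colimit of the canonical diagram
\[ D_C \colon (\mc S \downarrow C) \to \mc C, \qquad (S,\ s \colon i(S) \to C) \mapsto i(S), \]
with colimit cocone given by the structure maps $s$ themselves. Since $\mc S$ is small (as in every application in this paper) and $\mc C$ is locally small, the comma category $\mc S \downarrow C$ is small, so this is a \emph{small} colimit existing in $\mc C$. By hypothesis $F$ preserves it, so the maps $F(s) \colon F(i(S)) \to F(C)$ exhibit $F(C)$ as the colimit of $F \circ D_C$.

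First I would construct the candidate extension $\tilde\alpha$ objectwise. For fixed $C$, consider the family of composites $G(s) \circ \alpha_S \colon F(i(S)) \to G(C)$ indexed by the objects $(S,s)$ of $\mc S \downarrow C$. The key verification is that this family is a cocone under $F \circ D_C$: for a morphism $h \colon S \to S'$ in $\mc S$ with $s' \circ i(h) = s$, naturality of $\alpha$ gives $\alpha_{S'} \circ F(i(h)) = G(i(h)) \circ \alpha_S$, whence $G(s') \circ \alpha_{S'} \circ F(i(h)) = G(s' \circ i(h)) \circ \alpha_S = G(s) \circ \alpha_S$. Because $F(C)$ is the colimit of $F \circ D_C$, there is a unique map $\tilde\alpha_C \colon F(C) \to G(C)$ with $\tilde\alpha_C \circ F(s) = G(s) \circ \alpha_S$ for every $(S,s)$.

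Then I would check the three required properties. For naturality of $\tilde\alpha$ in $C$: given $f \colon C \to C'$, both $\tilde\alpha_{C'} \circ F(f)$ and $G(f) \circ \tilde\alpha_C$ are maps $F(C) \to G(C')$, and it suffices to compare them after precomposition with each colimit leg $F(s)$, since these legs are jointly epimorphic. Using the defining equation of $\tilde\alpha$ at $C'$ (for the object $(S, f \circ s)$) and at $C$ respectively, both composites reduce to $G(f \circ s) \circ \alpha_S$, so they agree. That $\tilde\alpha$ restricts to $\alpha$ along $i$: the category $\mc S \downarrow i(S_0)$ contains the object $(S_0, \mathrm{id})$, whose leg is the identity, so the defining equation yields $\tilde\alpha_{i(S_0)} = \alpha_{S_0}$. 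For uniqueness, any natural $\beta \colon F \Rightarrow G$ whose restriction along $i$ is $\alpha$ satisfies, by its own naturality at $s \colon i(S) \to C$, the equations $\beta_C \circ F(s) = G(s) \circ \beta_{i(S)} = G(s) \circ \alpha_S$; these are exactly the cocone equations characterizing $\tilde\alpha_C$, so $\beta_C = \tilde\alpha_C$ by the universal property.

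I expect no serious obstacle: the statement is a standard instance of the principle that a colimit-preserving functor is determined by its restriction to a dense subcategory, and the only genuine content is the bookkeeping showing that the composites $G(s) \circ \alpha_S$ assemble into a cocone, which is immediate from naturality of $\alpha$. The single point requiring care is that the canonical colimit diagram $D_C$ be small, so that the hypothesis on $F$ applies; this is guaranteed by smallness of $\mc S$ together with local smallness of $\mc C$.
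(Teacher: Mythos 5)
Your proof is correct. The paper gives no argument of its own here—its proof is a one-line citation of Kelly's Theorem 5.1 on dense functors—and what you have written is precisely the standard unpacking of that result: density exhibits each object $C$ as the colimit of the canonical diagram over $\mc{S} \downarrow C$, preservation of that colimit makes the legs $F(s)$ jointly epimorphic and universal, and the cocone $G(s) \circ \alpha_S$ (a cocone by naturality of $\alpha$) then induces the unique natural extension. The one point where your argument needs more than the literal statement is the smallness of $\mc{S}$, without which $\mc{S} \downarrow C$ is a large diagram and the hypothesis that $F$ preserves \emph{small} colimits does not apply; you flag this correctly, and it matches the paper's intended reading, since the lemma is only ever invoked for the small dense finite-limit-closed subcategory of a topos fixed immediately before its statement.
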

\begin{proof}
Follows, e.g., by \cite[Thm~5.1]{kelly2005}. 
\end{proof}

Recall that a category $\mc{I}$ is \emph{cofiltered} if every finite diagram in
$\mc{I}$ admits a cone.
\begin{lemma}
\label{lem:cofiltered-category-of-elements}
Let $\mc C$ be a finitely complete category and let $F \colon \mc C
\to \Set$ be a functor. The category of elements $\int F$ is cofiltered if and
only if $F$ preserves finite limits.
\end{lemma}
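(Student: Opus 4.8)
The plan is to prove both implications by reducing each side of the equivalence to its standard generators: a category is cofiltered exactly when it is nonempty, every pair of objects admits a cone (a common ``lower bound''), and every parallel pair of morphisms $a,b \colon A \to B$ admits a morphism $h \colon A' \to A$ with $ah = bh$; and a functor into $\Set$ preserves finite limits exactly when it preserves the terminal object, binary products, and equalizers. The proof then amounts to matching these three cofiltered conditions with these three preservation conditions, object by object, through the defining feature of $\int F$: a morphism $(C,x) \to (C',x')$ is a map $f \colon C \to C'$ in $\mc C$ with $F(f)(x) = x'$.

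For the direction assuming $F$ preserves finite limits, each universal property transports directly. Preservation of the terminal object makes $F(1)$ a singleton, which both witnesses nonemptiness and will later pin down cones over it. Given objects $(C,x)$ and $(C',x')$, preservation of the product $C \times C'$ produces the unique element of $F(C\times C')$ mapping to $x$ and $x'$ under the two projections, and the projections then form a cone in $\int F$. Given a parallel pair in $\int F$ over $(C,x) \rightrightarrows (C',x')$, preservation of the equalizer $e \colon E \to C$ yields an element of $F(E)$ over $x$, and $e$ itself supplies the required coequalizing morphism. I expect this direction to be routine.

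The converse is the substantive direction. Nonemptiness and the cone condition give the terminal object (any two elements of $F(1)$ are forced equal because the only map to $1$ is the unique one) and the surjectivity half of product preservation. The main obstacle lies in the injectivity statements — that $F(C\times C') \to F(C)\times F(C')$ is injective and that $F(e)$ is injective onto the equalizer — since here a single cone does not suffice. I would take a cone $(D,u)$ over the two competing elements, push the resulting parallel pairs down along each projection, and apply the coequalizing condition once per projection, composing the two witnesses into a single $h \colon (E'',u'') \to (D,u)$; joint monicity of the product projections then forces the two maps to agree after $h$, and evaluating at $u''$ collapses the two elements. The equalizer case is analogous, using that $e$ is a monomorphism to cancel it after applying the fork condition, together with the factorization property of $e$ for the surjectivity-onto-the-equalizer half. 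The care needed throughout is bookkeeping the compatibilities $F(h)(u'') = u$ so that evaluation transports elements correctly along composites.
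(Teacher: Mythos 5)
Your proposal is correct, but it takes a genuinely different route from the paper. The paper dispatches this lemma with a one-line citation to Mac Lane--Moerdijk (Thm.~VII.6.3 and Cor.~VII.6.4), i.e., it goes through the theory of \emph{flat} functors: a functor is flat iff its category of elements is cofiltered, and over a finitely complete domain flatness coincides with left exactness. You instead give a direct, self-contained verification, matching the three generators of cofilteredness (nonemptiness, cones over pairs, coequalizing morphisms for parallel pairs) against the three generators of finite-limit preservation (terminal object, binary products, equalizers). Your steps all check out: the forward direction transports each universal property as you say; in the converse, the double application of the coequalizing condition (once per projection), composed into a single $h$ and combined with joint monicity of the projections, correctly yields injectivity of $F(C\times C')\to F(C)\times F(C')$, and the equalizer case works as described, with monicity of $e$ handling injectivity of $F(e)$ and the factorization through $e$ handling surjectivity onto the equalizer set. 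What your approach buys is elementarity and independence from the flatness machinery (no filtered colimits of representables needed); what the paper's citation buys is brevity and the placement of the lemma within the standard theory, where the cofilteredness-of-elements criterion is also the definition of flatness for functors whose domain lacks finite limits --- a generality the paper does not need but the reference provides.
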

\begin{proof}
Follows immediately from, e.g., \cite[Thm.~VII.6.3 and Cor.~VII.6.4]{maclane-moerdijk}.
\end{proof}
In particular, a poset $\mc{P}$ is cofiltered if it is non-empty and for any
$p_1, p_2 \in \mc{P}$ there exists $p \in \mc{P}$ such that $p \leq p_1$ and $p
\leq p_2$. For $\mc{P}$ a cofiltered poset and $\mc{I}$ a cofiltered
category, a functor $d \colon \mc{P} \to \mc{I}$ is \emph{initial} if (i)
for every $i \in \mc I$ there exist $p \in \mc P$ and an arrow $d(p) \to i$ in $\mc I$,
and (ii) for any pair of arrows $f_1 \colon d(p_1) \to i$,
$f_2 \colon d(p_2) \to i$ in $\mc{I}$ there exists $p \in \mc{P}$ such
that $p \leq p_1$, $p \leq p_2$, and the following square commutes:
\[\begin{tikzcd}
d(p) \ar[r, "d(p \leq p_1)"] \ar[d, "d(p \leq p_2)"'] & d(p_1) \ar[d,
"f_1"] \\
d(p_2) \ar[r, "f_2"'] & i.
\end{tikzcd}\]

\begin{lemma}[\protect{\cite[Prop.~I.8.1.6]{SGA4}, \cite[Thm.~1]{AndNem1982}}]\label{lem:initial-functor-cofiltered}
For any small cofiltered category $\mc{I}$, there exist a cofiltered poset $\mc{P}$ and an initial functor $d \colon \mc{P} \to \mc I$.
\end{lemma}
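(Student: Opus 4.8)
The plan is to build $\mc P$ out of finite cones in $\mc I$ and to exploit the initiality criterion spelled out just before the statement: the functor $d\colon\mc P\to\mc I$ is initial exactly when, for every object $i\in\mc I$, there is some $p$ with a morphism $d(p)\to i$ (condition (i)), and any two such morphisms $f_1\colon d(p_1)\to i$, $f_2\colon d(p_2)\to i$ can be equalized after precomposing with the structure maps of a common lower bound $p\le p_1,p_2$ (condition (ii)). Categorically, these two conditions say precisely that each comma category $(d\downarrow i)$ is nonempty and connected, so it is this nonemptiness and connectedness that I would verify at the end.

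First I would describe the intended elements of $\mc P$: a typical element is a cone in $\mc I$ over a finite subdiagram, i.e.\ the data of a finite subcategory $K\subseteq\mc I$ together with an apex $c\in\mc I$ and a compatible family of morphisms $c\to x$, one for each object $x$ of $K$. The functor $d$ sends such an element to its apex $c$, and the order is refinement: a cone is below another when it is over a larger finite subdiagram (absorbing the coarser cone's apex and legs into that subdiagram) and its apex sits over the smaller cone's apex compatibly with all the chosen morphisms. That $\mc P$ is cofiltered then follows directly from cofilteredness of $\mc I$: the empty diagram has a cone, so $\mc P$ is nonempty, and given two cones one forms a cone over the union of the two finite subdiagrams whose apex maps to both apexes, using that every finite diagram in $\mc I$ admits a cone (which is the definition of cofilteredness). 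Initiality is equally direct: condition (i) is witnessed by the one-object cone on $i$ together with any morphism $c\to i$, and condition (ii) is witnessed by refining $p_1$ and $p_2$ to a common cone whose finite subdiagram contains the two morphisms $f_1,f_2$, so that the commuting-cone condition forces the required equalization.

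The hard part is purely bookkeeping: the naive ``category of cones'' just sketched is only a cofiltered category, not a poset, because cones with isomorphic but non-equal apexes, or with the same underlying data presented differently, violate antisymmetry. Passing to a poset reflection is not allowed, since $d$ does not descend to isomorphism classes (the apex is not an isomorphism invariant). The real content of the lemma, and the place where I expect to spend essentially all of the effort, is therefore to rigidify this category into an honest partial order while keeping both the functor $d$ and the cofilteredness and initiality properties intact. Concretely I would fix once and for all a well-ordering of the (small) sets of objects and morphisms of $\mc I$ and use it to select canonical normal forms for the finite cones, so that the refinement relation becomes genuinely antisymmetric; this is exactly the indexing device carried out in~\cite{AndNem1982}, and dualizing it recovers the more familiar statement that every filtered category admits a cofinal functor from a directed poset. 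Once antisymmetry is secured, the verifications of the previous paragraph go through unchanged.
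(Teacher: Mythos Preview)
The paper does not supply a proof of this lemma; it is simply quoted from \cite{AndNem1982}. Your sketch is a correct outline of the standard argument from that reference (dually, the well-known fact that every small filtered category admits a cofinal functor from a directed poset), so there is nothing in the paper to compare against.
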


\section{Ultraconvergence spaces}\label{sec:ultraconvergence-spaces}
In this section, we introduce \emph{ultraconvergence spaces}, which simultaneously generalize topological spaces and categories.

\subsection{Ultrafilters and ultrafamilies}

\paragraph{Ultrafilters.}We begin by recalling some preliminaries on ultrafilters.

\begin{definition}
An \emph{ultrafilter} on a set $I$ is a collection
$\mu$ of subsets of $I$ whose characteristic function $\cP(I) \to \two$ is a
Boolean algebra homomorphism. 
For any set $I$, we write $\beta I$ for the set of ultrafilters on $I$. For any
$i \in I$, the collection $[i] \coloneqq \{ A \subseteq I \ | \ i \in A\}$ is
an ultrafilter, called the \emph{principal ultrafilter} at $i$. The assignment
$i \mapsto [i]$ is an injective function from $I$ to $\beta I$, and we may
identify $I$ with its image in $\beta I$. Any function $f \colon I \to J$
extends to a function $f \colon \beta I \to \beta J$, which sends an
ultrafilter $\mu \in \beta I$ to $f(\mu) := \{ B \subseteq J \ | \ f^{-1}(B) \in \mu \}$. 
\end{definition}

A \emph{proper filter} on a set $I$ is a collection  of subsets of $I$
which is upward closed, closed under finite intersections, and does not contain the empty set.
The \emph{ultrafilter principle} is a weak form of the axiom of choice which states that any proper filter on $I$ is contained in some ultrafilter.
In particular, if $I$ is infinite, then there exist
infinitely many non-principal ultrafilters on $I$. 

Let $\mu$ be an ultrafilter on a set $I$. We call a subset $A$ of $I$ \emph{$\mu$-large} if
$A \in \mu$, and, more generally, when $P(i)$ is any property of elements $i
\in I$, we say that $P(i)$ \emph{holds $\mu$-eventually} if $\{i \in I \ | \
P(i)\} \in \mu$.
For any $I_0 \in \mu$, $\mu$ \emph{restricts} to an ultrafilter $\restr \mu
{I_0}$ on $I_0$, consisting of the $\mu$-large subsets of $I_0$; we have
$\iota(\restr \mu {I_0}) = \mu$, where $\iota \colon I_0 \hookrightarrow I$ is the
inclusion. We denote by $1$ the unique ultrafilter on the singleton set $\{*\}$. 

We now recall the important operation of \emph{dependent sum} of ultrafilters (see, e.g., \cite[\S 15]{Blass1970} and \cite[Def.~6]{Garner2020}). 

\begin{definition}\label{def:depsum}
Let $\mu$ be an ultrafilter on a set $I$ and, for each $i \in I$, let $\nu_i$ be an
ultrafilter on a set $J_i$. The \emph{dependent sum} $\sum_{i \to \mu} \nu_i$ is the
ultrafilter on the disjoint union $\sum_{i \in I} J_i$ consisting of those
subsets $U \subseteq \sum_{i \in I} J_i$ for which $U \cap J_i$ is
$\nu_i$-large $\mu$-eventually. 
\end{definition} 
To see that the collection $\sum_{i \to \mu} \nu_i$ defined in Definition~\ref{def:depsum} is indeed an ultrafilter, note that, if $m \colon
\mc P(I) \to \two$ and $n_i \colon \mc P(J_i) \to \two$ are the characteristic
functions of $\mu$ and $\nu_i$, respectively, then the characteristic function
of $\sum_{i \to \mu} \nu_i$ is the composite
\[ \begin{tikzcd}
{\mc P\left(\sum_{i \in I} J_i\right) \cong \prod_{i \in I} \mc P(J_i)} &[20pt] {\two^I \cong \mc P(I) } & {\two .}
\arrow["{\prod_{i \in I} n_i}", from=1-1, to=1-2]
\arrow["m", from=1-2, to=1-3]
\end{tikzcd} \]

We sometimes write $(i\to\mu) \otimes \nu_i$ in place of $\sum_{i\to\mu} \nu_i$, and in particular $\mu\otimes \nu$ if each of the ultrafilters $\nu_i$ is equal to the same ultrafilter $\nu$ on a set $J$. Note that this ultrafilter, called the \emph{tensor product} of $\mu$ and $\nu$, has index set $\sum_{i \in I} J \cong I \times J$, and it satisfies $\pi_I(\mu \otimes \nu) = \mu$ and $\pi_J(\mu \otimes \nu) = \nu$ where $\pi_I$ and $\pi_J$ are the two projections. 

\begin{remark}
    The association $I \mapsto \beta I$, extended to functions as described above, determines a functor $\beta \colon \Set \to \Set$ carrying the structure of a \emph{monad}: at a set $I$, its unit $\mathfrak u \colon \id \Rightarrow \beta$ is defined by the inclusion $I \hookrightarrow \beta I$ of principal ultrafilters, while its multiplication $\mathfrak m \colon \beta^2 \Rightarrow \beta$ is defined by mapping an ultrafilter $\Xi \in \beta^2 I$ to the family $\set{ I_0 \subseteq I | \set{ \mu \in \beta I | I_0 \in \mu} \in \Xi }$. The above direct sum can then be recovered in terms of the monad structure: for $J \coloneqq \sum_{i\in I} J_i$, the assignment $g(i) \coloneqq \iota_i(\nu_i)$ defines a map $g \colon I \to \beta J$ which satisfies $\sum_{i \to \mu} \nu_i = \mathfrak m_J( g(\mu))$.
\end{remark}

\paragraph{Ultrafamilies and ultraproducts.}\label{par:ultrafamilies}We now recall the construction of {ultraproducts} of sets. Fix an ultrafilter $\mu$ on a set $I$, and, for each $i \in I$, a class $X_i$. 

\begin{definition}
A \emph{$\mu$-family} $(x_i)_{i \to \mu}$ in $(X_i)_{i \in I}$ is the
$\mu$-equivalence class of a family $(x_i)_{i \in I_0}$, indexed over some
$\mu$-large set $I_0$, with $x_i \in X_i$ for each $i \in I_0$, where two such
families are \emph{$\mu$-equivalent} if they are equal $\mu$-eventually. In
particular, when each $X_i$ is equal to a fixed class $X$, we call this a
$\mu$-family in $X$. By an \emph{ultrafamily} we mean a $\mu$-family for some
ultrafilter $\mu$.

\looseness=-1
If each class $X_i$ is a set, then the collection of $\mu$-families in
$(X_i)_{i \in I}$ is also a set, called the \emph{ultraproduct} of the family
$(X_i)_{i \in I}$ along $\mu$, which we denote by $\prod_{i \to \mu} X_i$. 
In the case where
each $X_i$ is equal to a fixed set $X$, we write $X^\mu$ for $\prod_{i \to \mu}
X$, and call this the \emph{ultrapower} of $X$ by $\mu$. 

\end{definition}

Let $\kappa$ be an ultrafilter on a set $K$ and let $h \colon K \to I$ be a function such that $\mu = h(\kappa)$. There is a canonical function $\epsilon_h \colon \prod_{i\to\mu} X_i \to \prod_{k\to\kappa}X_{h(k)}$ which sends $(x_i)_{i \to \mu}$ to $(x_{h(k)})_{k \to \kappa}$. 
Note that, if $h$ is injective, then $\epsilon_h$ is a bijection. 
In particular, for any $i_0 \in I$, we have 
$\prod_{i\to[i_0]} X_i \cong \prod_{*\to 1} X_{i_0}$,
which identifies an $[i_0]$-family $(x_i)_{i\to[i_0]}$ with the $1$-family $(x_{i_0})_{* \to 1}$.

Ultrafamilies indexed by a dependent sum $\sum_{i \to \mu} \nu_i$, where $\nu_i$ is an ultrafilter on a set $J_i$ for each $i \in I$, can be described as follows.
For each $(i,j) \in \sum_{i \in I} J_i$, let $X_{i,j}$ be a class. 
Note that an ultrafamily $(x_{i,j})_{(i,j) \to \sum_{i \to I} \nu_i}$ can be canonically identified with an ultrafamily of ultrafamilies, $((x_{i,j})_{j \to \nu_i})_{i \to \mu}$. If all the classes $X_{i,j}$ are small, this identification yields a bijection of ultraproducts 
$$\prod_{(i,j) \to \sum_{i \to \mu} \nu_i} X_{i,j} \ \cong \ \prod_{i\to\mu}\left( \prod_{j \to \nu_i} X_{i,j} \right).$$ 
In light of this identification, as a convenient notation, we denote both of these ultraproducts by $\prod_{i \to \mu; j \to \nu_i} X_{i,j}$, and we write $(x_{i,j})_{i \to \mu; j \to \nu_i}$ for an ultrafamily in it.

\paragraph{Categories of ultrafamilies.} The \emph{category of
ultrafilters} $\UF$~\cite[\S 2]{Blass1970} has as objects pairs $(I, \mu)$ where $I$
is a set and $\mu \in \beta I$, and $\UF((I, \mu), (J,\nu))$ is defined as the
subset of $J^\mu$ consisting of those $\mu$-classes of functions $f \colon I
\to J$ for which $f(\mu) = \nu$. 

We generalize this as follows. For a class
$X$, we denote by $\bbbeta X$ the category whose objects are ultrafamilies in
$X$, and, for ultrafamilies $(x_i)_{i \to \mu}$ and $(y_j)_{j \to \nu}$, the
morphisms $(x_i)_{i \to \mu} \to (y_j)_{j \to \nu}$  are $\UF$-morphisms $f
\colon (J, \nu) \to (I, \mu)$ such that $(x_{f(j)})_{j \to \nu} = (y_j)_{j \to
\nu}$. In particular, $\bbbeta 1$ coincides with $\UF^\op$.

\begin{remark}
The notation $\bbbeta X$ follows~\cite[Def.~1.18]{saadia2025}. The category
$(\bbbeta X)^\op$ is equivalent to the category $\UF_{\mc{C}}$ introduced
in~\cite[Def.~21]{Garner2020}, where $\mc{C}$ is the category of small
presheaves on the discrete category $X$.
The assignment $X \mapsto \bbbeta X$ extends to a pseudomonad on $\CAT$ which was introduced by Rosolini~\cite{rosolini-ultracompletion} and called the \emph{ultracompletion} pseudomonad.
\end{remark}

We highlight here a property of $\UF$ that we will use in the following: namely, the fact that it satisfies the \emph{right Ore condition}.

\begin{lemma}[\protect{\cite[Prop.~39]{Garner2020}}]\label{lem:completing-cospans}
    Any cospan $(J, \nu) \to (I,\mu) \leftarrow (K,\kappa)$ in $\UF$ can be completed to a commuting square. 
\end{lemma}

\begin{remark}\label{rem:functoriality-tensor-product}
As noted in \cite[Def.~6]{Garner2020}, the tensor product $\otimes$ gives a
monoidal structure on $\cat{UF}$. Dependent sums are also functorial, in a sense that we do not need to make precise for the sake of this paper (see, e.g., \cite[Def.~1.22]{saadia2025}). However,
we will need the following two constructions, which may be seen as arising from the functoriality of the dependent sum:
\begin{enumerate}
    \item every $\UF$-arrow $h \colon (K , \kappa) \to (I, \mu)$ induces, for any $\mu$-family $( (J_i, \nu_i) )_{i \to \mu}$ of objects of $\UF$, a 
        $\UF$-arrow $h\otimes \id$ from $(k\to\kappa) \otimes \nu_{h(k)}$ to $(i\to\mu) \otimes \nu_i$; its underlying function $\sum_{k \in K} J_{h(k)} \to \sum_{i\in I} J_i $ is given by $(k, j) \mapsto (h(k), j)$;
    \item every $\mu$-family of $\UF$-arrows $(h_i \colon (K_i, \kappa_i) \to (J_i,\nu_i))_{i\to\mu}$ induces a $\UF$-arrow $\id \otimes (h_i)_{i \to \mu}$ from $(i\to\mu) \otimes \kappa_i$ to $(i\to\mu) \otimes \nu_i$; its underlying function $\sum_{i\in I} K_i \to \sum_{i\in I} J_i$ is given by $(i,j) \mapsto (i, h_i(j))$.
\end{enumerate}
\end{remark}

\subsection{Ultraconvergence spaces}

\paragraph{Definition of ultraconvergence spaces.}
We now define the generalization of topological spaces, viewed as relational
$\beta$-modules as explained in Section \ref{introduction}, to the categorical level, replacing the $\two$-valued
convergence relation by a $\Set$-valued relation. We propose to call this
concept an \emph{ultraconvergence space} to emphasize the topological
intuition. We will first give a concrete definition; see Remark~\ref{rem:on-uc-def} for a more abstract viewpoint. The same concept was axiomatized independently
in~\cite{saadia2025}, where it is called \emph{virtual
ultracategory}, and a similar but slightly weaker concept, called \emph{generalized ultracategory}, appears in~\cite{hamad2025} (in the terminology of Definition~\ref{dfn:uc-space} below, the left-naturality axiom is missing).
\begin{definition}\label{dfn:uc-space}
An \emph{ultraconvergence space} consists of the following data:
\begin{itemize}
\item a class $X$, whose elements are called \emph{points}; 
\item for every point $x \in X$ and every $\mu$-family of points $(y_i)_{i \to \mu}$, a small set denoted by $\Hom_{\ult}(x, (y_i)_{i \to \mu})$, whose elements 
are called \emph{ultra-arrows} and denoted $r \colon x \ult \lim_{i \to \mu} y_i$;
\item for every $x \in X$, an \emph{identity} ultra-arrow $\id_x \colon x \ult \lim_{\ast \to 1} x$;
\item for every $\UF$-arrow $h \colon (J, \nu) \to (I, \mu)$ and every ultra-arrow $r \colon x \ult \lim_{i \to \mu} y_i$, a \emph{reindexing} $r[h] \colon x \ult \lim_{j \to \nu} y_{h(j)}$;
\item for every ultra-arrow $r \colon x \ult \lim_{i \to \mu} y_i$ and every $\mu$-family of ultra-arrows $s_i \colon y_i \ult \lim_{j \to \nu_i} z_{i,j}$, a \emph{composition} $(s_i)_{i \to \mu} \cdot r \colon x \ult \lim_{i \to \mu; j \to \nu_i} z_{i,j}$,
\end{itemize}
\vbox{subject to the following axioms:
\begin{enumerate}
    \item \label{it:homfunctor} (functoriality) $r[\id_{(I,\mu)}] = r$ and $r[h \circ h'] = r[h][h']$;
    \item \label{it:natural1} (left-naturality) $(s_{h(k)})_{k\to\kappa} \cdot r[h] = ( (s_i)_{i\to\mu} \cdot r )[h \otimes \id]$;
    \item \label{it:natural2} (right-naturality) $\left(s_i[h_i]\right)_{i\to\mu} \cdot r = ( (s_i)_{i\to\mu}\cdot r) \left[ \id \otimes (h_i)_{i \to \mu} \right]$;
    \item \label{it:identityR} (right-identity) $(r)_{*\to 1} \cdot \id_x = r$;
    \item \label{it:identityL} (left-identity) $(\id_{y_i})_{i\to\mu} \cdot r = r$;
    \item \label{it:associativity} (associativity) $(t_{i,j})_{i\to \mu; j \to \nu_i} \cdot ( (s_i)_{i\to\mu} \cdot r) = ( (t_{i,j})_{j\to\nu_i} \cdot s_i )_{i\to\mu} \cdot r$;
\end{enumerate}
for any $r$, $(s_i)_{i \to \mu}$, $h$ as above, any ultra-family of ultra-arrows $(t_{i,j})_{i \to \mu; j \to \nu_i}$, any $\UF$-arrow $h' \colon (K,\kappa) \to (J,\nu)$, and any $\mu$-family of $\UF$-arrows $(h_i \colon (K_i, \kappa_i) \to (J_i, \nu_i))_{i \to \mu}$.}

\looseness=-1 
We will refer to the data $\braket{\Hom_{\ult}, \id, \cdot}$ satisfying
the above axioms as an \emph{ultraconvergence structure} on the class $X$. 
An ultraconvergence space is \emph{small} if its class of points is a small set.
\end{definition}

\begin{remark}\label{rem:on-uc-def}
    To justify the first three axioms in Definition~\ref{dfn:uc-space}, and explain the chosen terminology, note first that axiom~(\ref{it:homfunctor}) precisely says that $\Hom_{\ult} \colon X \times \bbbeta X \to \cSet$ is a functor, where we regard $X$ as a discrete category and, for any $x \in X$ and $\bbbeta X$-arrow $h \colon (y_i)_{i \to \mu} \to (y_{h(j)})_{j \to \nu}$, we let $\Hom_{\ult}(h) \colon \Hom_{\ult}(x, (y_i)_{i \to \mu}) \to \Hom_{\ult}(x,(y_{h(j)})_{j \to \nu})$ be the function that maps $r$ to its reindexing $r[h]$. Moreover, the composition operation is a function  
\[\prod_{i\to\mu} \Hom_{\ult} ( y_i , (z_{i,j})_{j\to \nu_i} )  \times \Hom_{\ult} ( x , (y_i)_{i\to\mu} ) \to \Hom_{\ult} ( x , (z_{i,j})_{i\to \mu; j \to \nu_i})\]
and axioms~(\ref{it:natural1})~and~(\ref{it:natural2}) express suitable naturality properties of this operation. 
\end{remark}

\begin{remark}
    In other words, \Cref{rem:on-uc-def} expresses that ultraconvergence arrows determine a \emph{profunctor} $\Hom_{\ult} \colon  \bbbeta X \pro X$, playing the role of relations between categories. In formal analogy with topological spaces, proved in \cite{barr} to be  algebras for an extension of the ultrafilter monad $\beta$ to relations, the definition of an ultraconvergence space can be repackaged into that of an algebra for an extension of the ultracompletion pseudomonad $\bbbeta$ to profunctors: this is proved by the third author, together with Quentin Aristote, in the recent work \cite{aristote2026profunctorialalgebras}.
\end{remark}

\paragraph{Examples of ultraconvergence spaces.} 
Before giving the natural notion of structure-preserving map between ultraconvergence spaces (Definition~\ref{def:continuous-maps} below), we give a number of examples, showing in particular how ultraconvergence spaces generalize both categories and topological spaces at the object level. We will see in Remarks~\ref{rem:sp-and-alex}~and~\ref{rem:top-into-ultsp} below that these constructions are indeed embeddings also at the categorical level.

\begin{constr}\label{con:alexandroff}
    Let $\mc{C}$ be a category. The \emph{Alexandroff space} $\Alex{(\mc{C})}$ of $\mc{C}$ is the ultraconvergence space whose points are the objects of $\mc{C}$, with $\Hom_{\ult}(x, (y_i)_{i \to \mu}) \coloneqq \prod_{i \to \mu} {\mc{C}}(x, y_i)$. The identity and composition of $\Alex{(\mc{C})}$ are derived from those in $\mc{C}$, and the reindexing of an ultra-arrow $r = (r_i)_{i \to \mu} \in \prod_{i \to \mu} {\mc{C}}(x, y_i)$ along $h \colon (J, \nu) \to (I, \mu)$ is defined to be $(r_{h(j)})_{j \to \nu}$.
\end{constr}
By analogy with the notion of \emph{Alexandroff topological space}, we call an \emph{Alexandroff ultraconvergence space} any ultraconvergence space of the form $\Alex{(\mc{C})}$, for $\mc{C}$ a category.
It is possible to recover the category $\mc{C}$ from the associated Alexandroff ultraconvergence structure, in the same way that a preorder can be recovered from its associated Alexandroff topology. In fact, any ultraconvergence space has a naturally associated category, as follows.
\begin{constr}\label{con:spec-cat}
    Let $X$ be an ultraconvergence space. The \emph{specialization category} of $X$ is the category $\Sp{(X)}$ whose objects are the points of $X$, and whose morphisms $x \to y$ are the ultra-arrows $x \ult \lim_{*\to 1} y$. Note that axioms (\ref{it:identityR}), (\ref{it:identityL}), and (\ref{it:associativity}) in Definition~\ref{dfn:uc-space} imply that $\Sp{(X)}$ is a category. Also note that any category $\mc{C}$ is isomorphic to $\Sp{(\Alex{(\mc{C})})}$. As a %
    convention, when no confusion can arise, we use the same notation for an ultraconvergence space and its specialization category.
\end{constr}
\begin{constr}\label{constr:top-sp-as-ultraconv-sp}
    Let $X$ be a topological space.  For any point $x\in X$ and any ultrafamily $(y_i)_{i\to\mu}$ in $X$, let us write $x \preccurlyeq \lim_{i \to \mu} y_i$ if, and only if, for every open neighborhood $U$ of $x$, we have $y_i \in U$ $\mu$-eventually. Set 
$\UltArr ( x, (y_i)_{i\to\mu}) \coloneqq \{*\}$
    if $x \preccurlyeq \lim_{i \to \mu} y_i$ and 
$\UltArr ( x, (y_i)_{i\to\mu}) \coloneqq \emptyset$
otherwise. This is a two-valued ultraconvergence structure on $X$. 
\end{constr}

\begin{remark}\label{rem:top-sp-from-ultraconv-structure}
    Let $X$ be a topological space, seen as a two-valued ultraconvergence space by Construction~\ref{constr:top-sp-as-ultraconv-sp}. We show how to recover the topology on $X$ from its ultraconvergence structure. The functor $\Hom_{\ult} \colon X \times \bbbeta X \to \two$ restricts to a relation ${\preccurlyeq} \subseteq X \times \beta X$, where we write $x \preccurlyeq \mu$ for $x \preccurlyeq \lim_{i \to \mu} i$; explicitly, this means that  every open neighborhood of $x$ is $\mu$-large. One may then show that a subset $U \subseteq X$ is open if, and only if, if $x \in U$ and $(x_i)_{i \to \mu}$ is a $\mu$-family such that $x \preccurlyeq \lim_{i \to \mu} x_i$, then $x_i \in U$ holds $\mu$-eventually. In particular, we recover the \emph{specialization order} on $X$ as the (thin) category $\Sp{(X)}$: $x \preccurlyeq y$ if, and only if, every open neighborhood of $x$ contains $y$.    
A small ultraconvergence space $X$ corresponds to a topological space in this way if and only if there is always at most one ultra-arrow of a given type, and if moreover for every $\UF$-arrow $h\colon (J,\nu)\to (I,\mu)$, if $x \preccurlyeq \lim_{j\to\nu} y_{h(j)}$, then $x \preccurlyeq \lim_{i\to\mu} y_i$. This precisely recovers the Galois connection between relational $\beta$-module structure and topologies on $X$ of \cite{barr}. 
\end{remark}

We now give two crucial examples of ultraconvergence spaces that are in general \emph{not} obtained by the above constructions.

\begin{example}\label{ex:ultraconvergence-space-set}
We define an ultraconvergence structure on the class of sets. 
For a set $A$ and a $\mu$-family $(B_i)_{i\to \mu}$ of sets, 
we define $\Hom_{\ult}(A, (B_i)_{i \to \mu})$
to be the set of functions from $A$ into
    the ultraproduct $\prod_{i\to \mu} B_i$. The identity ultra-arrow $\id_A$ sends $a \in A$ to $(a)_{\ast \to 1} \in \prod_{\ast \to 1} A$.
For an ultra-arrow $r \colon A
\to\prod_{i\to \mu} B_i$ and a $\UF$-arrow $h \colon (K,\kappa) \to(I,\mu)$,
the reindexing $r[h]$ is given by post-composition with the induced function $\epsilon_h$:
\[ \begin{tikzcd}
A & {\prod_{i\to\mu} B_i} & {\prod_{k\to\kappa} B_{h(k)}}
\arrow["r", from=1-1, to=1-2]
\arrow["\epsilon_h", from=1-2, to=1-3]
\end{tikzcd} \]
while, for a $\mu$-family of ultra-arrows $s_i \colon B_i \to\prod_{j\to
\nu_i} C_{i,j}$, the composite $(s_i)_{i\to \mu}\cdot r$ is given by
\[ \begin{tikzcd}
A & {\prod_{i\to\mu} B_i} &[20pt] {\prod_{i\to\mu} \left(
    \prod_{j\to\nu_i} C_{i,j} \right).}
\arrow["r", from=1-1, to=1-2]
\arrow["{{\prod_{i\to\mu} s_i}}", from=1-2, to=1-3]
\end{tikzcd} \]
    We denote by $\Set$ the ultraconvergence space so defined. The specialization category of this ultraconvergence space is the category of sets and functions, also denoted $\Set$. 
\end{example}

We now show how, more generally, any geometric theory $\mathbb{T}$ (see, e.g., \cite[Sec.~D1.1]{elephant}) gives rise to an ultraconvergence structure on its class of models. 
\begin{example}\label{ex:ultraconvergence-space-modT}
    \looseness=-1 
Let $\mathbb{T}$ be a geometric theory in a signature $\Sigma$. 
    We define an ultraconvergence structure on the class $\Mod(\mathbb{T})$ of ($\Set$-)models of $\mathbb{T}$.
For any model $M$ of $\mathbb{T}$ and any $\mu$-family
$(N_i)_{i\to\mu}$ of models of $\mathbb{T}$, 
define $\Hom_{\ult}(M, (N_i)_{i \to \mu})$ to be the set of 
homomorphisms from the $\Sigma$-structure $M$ to the ultraproduct
    $\Sigma$-structure $\prod_{i\to\mu}N_i$. Note that $\prod_{i \to \mu} N_i$ need not be a model of $\mathbb{T}$ in general, but it is still a $\Sigma$-structure. The identity, composition, and reindexing structure on $\Mod(\mathbb{T})$ are defined in the same way as in Example~\ref{ex:ultraconvergence-space-set}, noting that these operations indeed all yield homomorphisms of $\Sigma$-structures.
The specialization category of this ultraconvergence space is 
the category of $\mathbb T$-models and homomorphisms, also denoted $\Mod(\mathbb{T})$.

\end{example}
Let us make a few remarks about the above two examples, also relating them to the literature on ultracategories. First note that Example~\ref{ex:ultraconvergence-space-set} is in fact a special case of Example~\ref{ex:ultraconvergence-space-modT}, namely, the ultraconvergence space $\Set$ is $\Mod(\mathbb{O})$, where $\mathbb{O}$ is the \emph{theory of objects}, i.e.\ the empty theory over the signature with a single sort and no symbols except equality \cite[Sec.~D3.2]{elephant}; we will return to this in Example~\ref{ex:alexandroff-presheaf-type}.

Also note that the ultraconvergence structure on $\Set$ given in Example~\ref{ex:ultraconvergence-space-set} is \emph{not} the same as the one of the Alexandroff space $\Alex{(\Set)}$ defined in Construction~\ref{con:alexandroff}.

\begin{example}
Ultra-arrows from $A$ to $(B_i)_{i \to \mu}$ in $\Alex{(\Set)}$ are $\mu$-families of functions $(A \to B_i)_{i \to \mu}$, and the set of such $\mu$-families is in general larger than the set of functions $A \to \prod_{i \to \mu} B_i$. To see this, take $\mu$ a non-principal ultrafilter on an infinite set $I$, and $B_i = \two$ for every $i \in I$. Then, since $\prod_{i \to \mu} B_i = \two$, the $\Set$-ultra-arrows from any set $A$ to $(B_i)_{i \to \mu}$ are in bijection with $\mathcal{P}(A)$, but the $\Alex{(\Set)}$-ultra-arrows from $A$ to $(B_i)_{i \to \mu}$ are in bijection with $\mathcal{P}(A)^\mu$. Thus, $\Alex{(\Set)}$ is not isomorphic to the space $\Set$ of Example~\ref{ex:ultraconvergence-space-set}.
\end{example}

Next recall that, for any geometric theory $\mathbb{T}$, the class of models $\Mod(\mathbb{T})$ coincides with the class of points of the classifying topos $\mathcal{E}_{\mathbb{T}}$ of $\mathbb{T}$. The ultraconvergence structure on $\Mod(\mathbb{T})$ is then what we call the \emph{canonical ultraconvergence structure} on $\pt (\mathcal{E}_{\mathbb{T}})$, see the beginning of Section~\ref{sec:main} below.

In the special case of Example~\ref{ex:ultraconvergence-space-modT} where the theory $\mathbb{T}$ is coherent, {\L}o{\' s}'s theorem implies that any ultraproduct of models of $\mathbb{T}$ is again a model of $\mathbb{T}$. In that case, ultra-arrows in $\Mod(\mathbb{T})$ are just certain morphisms in the category
$\Mod(\mathbb{T})$. For readers familiar with the theory of \emph{ultracategories} \cite{makkai, lurie}, we note that the ultraconvergence space $\Mod(\mathbb{T})$, for $\mathbb{T}$ coherent, is a special case of the ultraconvergence space that may be associated with any \emph{strong ultracategory} in the sense of \cite[Rem.~3.5]{saadia2025}: as pointed out in \cite[Exa.~4.2(vii)]{saadia2025}, given a strong ultracategory $\mathcal{M}$,  we can equip it with an ultraconvergence structure by setting $\Hom_{\ult}(M, (N_i)_{i \to \mu}) \coloneqq \mathcal{M}\left(M, \int_{I} N_i \, d\mu\right)$. For $\mathbb{T}$ a coherent theory, $\mathcal{M} = \Mod(\mathbb{T})$ is a strong ultracategory, and this is then the ultraconvergence structure of Example~\ref{ex:ultraconvergence-space-modT}.

\begin{remark}
Note that \Cref{dfn:uc-space} presents evident similarities with Clementino and Tholen's notion of ultracategory \cite[\S 11]{clementino03}. The crucial difference lies in our \emph{reindexings}: indeed, an ultracategory $X$ in the sense of \cite{clementino03} can be seen as a (small) ultraconvergence space in which the reindexing map $\Hom_{\ult}(h) \colon \Hom_{\ult}(x, (y_i)_{i \to \mu}) \to \Hom_{\ult}(x,(y_{h(j)})_{j \to \nu})$, for any $\bbbeta X$-arrow $h \colon (y_i)_{i \to \mu} \to (y_{h(j)})_{j \to \nu}$, is a bijection. This condition, however, is not satisfied by our main example of ultraconvergence spaces (\Cref{ex:ultraconvergence-space-modT}): a priori we can guarantee $\Hom_{\ult}(h)$ to be a bijection only if $h$ is injective (and hence an isomorphism in $\UF$).
\end{remark}

\subsection{The 2-category of ultraconvergence spaces}

We now introduce the natural notion of morphism between ultraconvergence spaces as maps preserving ultraconvergence structures. 
\begin{definition}\label{def:continuous-maps}
Let $X$ and $Y$ be ultraconvergence spaces. A \emph{continuous map} $f \colon X \to Y$ is a function from the points of $X$ to the points of $Y$, together with, for every $x\in X$ and every ultrafamily $(x_i)_{i\to\mu}$ in
$X$, a function
   $ \Hom_{\ult}^X \left(x , (x_i)_{i\to \mu}\right) \to
\Hom_\ult^Y \left( f(x) , ( f(x_i) )_{i\to \mu} \right)$
which preserves identities, reindexings, and compositions:
\begin{enumerate}
\item $f ( \id_x ) = \id_{f(x)}$;
\item $f(r[h]) = f(r)[h]$;
\item $f ( (s_i)_{i\to\mu} \cdot r) = (f(s_i))_{i\to\mu} \cdot f(r)$.
\end{enumerate}
We will also refer to the function on ultra-arrows as a \emph{continuity structure} on the function from points of $X$ to points of $Y$. 

Let $f, f' \colon X \to Y$ be continuous maps. A \emph{morphism} $\alpha
\colon f \implies f'$ is the datum of an ultra-arrow $\alpha_x \colon f(x) \ult
\lim_{* \to 1} f'(x)$ for every $x \in X$, such that $ f'(r) \cdot \alpha_x =
(\alpha_{x_i})_{i\to \mu} \cdot f(r)$ for each ultra-arrow $r \colon x \ult
\lim_{i\to \mu} x_i$ in $X$.

Ultraconvergence spaces, continuous maps and their morphisms determine a
(strict) 2-category, which we denote by $\UltSp$.
\end{definition}

Building on Constructions~\ref{con:alexandroff},~\ref{con:spec-cat}, and~\ref{constr:top-sp-as-ultraconv-sp}, we show how the 2-category of ultraconvergence spaces is a joint generalization of both topological spaces and categories.
\begin{remark}\label{rem:sp-and-alex}
    Every continuous map $f \colon X \to Y$ of ultraconvergence spaces determines a functor $\Sp (f) \colon \Sp(X) \to \Sp(Y)$, and the components of a morphism of continuous maps $\alpha \colon f \implies f'$ determine a natural transformation $\Sp (\alpha) \colon \Sp (f) \implies \Sp (f')$. Therefore, the specialization category construction yields a 2-functor $\Sp(-) \colon \UltSp \to \CAT$. 

      Conversely, every functor $F \colon \mc C \to \mc D$ determines a continuous map $\Alex (F) \colon \Alex ({\mc C}) \to \Alex(\mc D)$ by mapping an ultra-arrow $(f_i \colon c \to c_i)_{i\to\mu}$ in $\mc C$ to the ultra-arrow $(F(f_i) \colon F(c) \to F(c_i))_{i\to\mu}$ in $\mc D$, and the components of a natural transformation $\alpha \colon F \implies F'$ determine a morphism of continuous maps $\Alex (\alpha) \colon \Alex(F) \implies \Alex(F')$. Therefore, the Alexandroff space construction yields a 2-functor $\Alex{(-)} \colon \CAT \to \UltSp$. 

      Moreover, note that $\Alex{(-)}$ is left 2-adjoint to $\Sp{(-)}$. Since, for any category $\mc{C}$, the unit $\mc C \cong \Sp{(\Alex (\mc C))}$ is an isomorphism of categories, the left adjoint $\Alex{(-)} \colon \CAT \to \UltSp$ is \emph{2-fully-faithful}, meaning that it is locally an equivalence.
\end{remark}

\begin{remark}\label{rem:top-into-ultsp}
      Let $\mathbf{Top}$ be the (locally posetal) 2-category of topological spaces, where for two continuous maps $f,g \colon X \to Y$ we write $f \leq g$ if $f(x) \preccurlyeq g(x)$ holds in the specialization order of $Y$ for each $x \in X$. Then $\mathbf{Top}$ embeds 2-fully-faithfully in $\mathbf{UltSp}$ via Construction \ref{constr:top-sp-as-ultraconv-sp}. 
    For general ultraconvergence spaces, continuity is \emph{additional structure} on the function from points to points. However, in the special case of a function $f \colon X \to Y$ where $Y$ is the ultraconvergence space associated to a topological space, there is at most one such continuity structure. The $2$-fully-faithfulness of the embedding $\mathbf{Top}$ into $\mathbf{UltSp}$ means in particular that, for a function $f \colon X \to Y$ between topological spaces, this continuity structure exists precisely when $f$ is continuous in the usual sense.
\end{remark}

In the remainder of this section, we will describe pullbacks, open and closed subspaces, and closures in ultraconvergence spaces. These notions will be used in the proofs in the subsequent sections.
\begin{constr}\label{constr:pullbacks}
We describe how \emph{pullbacks} are computed in
$\UltSp$. Let $f \colon Y \to X, g \colon Z \to X$ be continuous maps of
ultraconvergence spaces; their pullback
\[ \begin{tikzcd}
|[label={[label distance=-2mm]-45:\lrcorner}]|{Z \times_X Y } & Y \\
Z & X
\arrow["{\widetilde g}", from=1-1, to=1-2]
\arrow["{\widetilde f}"', from=1-1, to=2-1]
\arrow["f", from=1-2, to=2-2]
\arrow["g"', from=2-1, to=2-2]
\end{tikzcd} \]
is given as follows. The points of $Z \times_X Y$ are pairs of points $z \in Z$ and $y \in Y$
such that $g(z) = f(y)$, and an ultra-arrow $(z,y) \ult \lim_{i\to\mu}
(z_i,y_i)$ in $Z \times_X Y$ is a pair of an ultra-arrow $r \colon z \ult
\lim_{i\to\mu} z_i$ in $Z$ and an ultra-arrow $s \colon y \ult \lim_{i\to\mu}
y_i$ in $Y$ such that $g(r) = f(s)$. The continuous maps $\widetilde f$ and $\widetilde g$
are then the obvious projections, both on points and on ultra-arrows.
\end{constr}

\paragraph{Subspaces.}Recall by Construction \ref{constr:top-sp-as-ultraconv-sp}
that a topological space $X$ can be seen as an ultraconvergence space in a
canonical way, and its topology can be recovered from the ultraconvergence
structure: a subset $U \subseteq X$ is open if, and only if, for $x
\preccurlyeq \lim_{i\to\mu} x_i$ with $x \in U$, then $x_i \in U$ holds
$\mu$-eventually. For a general ultraconvergence space $X$, there may be more than
one ultra-arrow $x \ult \lim_{i\to\mu} x_i$. We extend the notation from the topological case by writing $x \preccurlyeq \lim_{i\to\mu} x_i$ if there exists an ultra-arrow $x \ult \lim_{i\to\mu} x_i$ in $X$. In formal analogy with the topological context, we can then give the following definitions.

\begin{definition}
Let $\braket{X, \Hom_\ult, \id,\cdot}$ be an ultraconvergence space. 
A \emph{subspace} of $X$ is a subclass $S$ of $X$ together with the
restrictions of $\Hom_\ult$, $\id$, and $\cdot$ to $S \times \bbbeta S$.
A subspace $U$ of $X$ is \emph{open} if, for any $x \in U$ and any ultrafamily $(x_i)_{i \to \mu}$ in $X$, if $x \preccurlyeq \lim_{i \to \mu} x_i$, then $x_i \in U$ holds $\mu$-eventually. A subspace $C$ of $X$ is \emph{closed} if its complement is open;
explicitly, $C$ is closed if, whenever $x \preccurlyeq \lim_{i\to\mu} x_i$ and $x_i \in C$ holds $\mu$-eventually, we have $x \in C$.

If $X$ is small, then the poset of open subspaces of $X$, ordered by inclusion, is a frame, which we denote by
$\Open(X)$.
For any $x \in X$, we denote by $\mc N_x$ the poset of open subspaces of $X$
containing $x$, which we refer to as the \emph{open neighborhoods} of $x$.
\end{definition}
\begin{constr}\label{con:closure}
    \looseness=-1 
Let $S \subseteq X$ be a subclass of an ultraconvergence space $X$. Define 
\[ \cl{S} \coloneqq \set{ x \in X \ | \ \text{there exists an ultrafamily } (s_i)_{i \to \mu} \text{ in } S \text{ such that } x \preccurlyeq \lim_{i \to \mu} s_i } \ . \]
Then $\cl{S}$ is the \emph{closure} of $S$, i.e., the smallest closed subspace of $X$ that contains $S$. 
Clearly, any closed subspace of $X$ that contains $S$ must also contain $\cl{S}$, and $S \subseteq \cl{S}$ as $s \preccurlyeq \lim_{\ast \to 1} s$.
To see that $\cl{S}$ is indeed closed, let $x \in X$ and suppose that $r \colon x \ult \lim_{k \to \kappa} x_k$ is an ultra-arrow such that  $x_k \in \cl{S}$ holds $\kappa$-eventually. Denote by $K$ the $\kappa$-large set of indices $k$ such that $x_k \in \cl{S}$. For each $k \in K$, pick an ultrafamily $(s_{i,k})_{i \to \nu_k}$ in $S$ and an ultra-arrow $r_k \colon x_k \ult \lim_{i \to \nu_k} s_{i,k}$. The composite ultra-arrow $(r_k)_{k \to \kappa} \cdot r \colon x \ult \lim_{k \to \kappa; i \to \nu_k} s_{i,k}$ shows that $x \in \cl{S}$. 
\end{constr}
\begin{remark}
Consider the Sierpiński space $\two$ seen as an ultraconvergence space,
meaning that $b \preccurlyeq \lim_{i\to\mu} b_i$ if, and only if, $b \leq b_i$
holds $\mu$-eventually. For a subspace $U$ of $X$, in analogy with the
topological case, the condition of being open can then be equivalently
expressed as the existence of a (unique) continuity structure on its
characteristic functor $\chi_U \colon X \to \two$.
\end{remark}

\section{Étale spaces}\label{sec:etale-spaces}
In this section, we introduce the crucial notion of an \emph{étale map} of ultraconvergence spaces, and we study the category $\Et(B)$ of \emph{étale spaces} over an ultraconvergence space $B$. After showing a number of general properties of these étale maps in Subsection~\ref{subsec:etale-general}, we show in Subsection~\ref{subsec:grothendieck} that the category $\Et(B)$ is equivalent to the category of continuous maps from $B$ to the ultraconvergence space $\Set$ of Example~\ref{ex:ultraconvergence-space-set}. We use this in Subsection~\ref{subsec:etale-pretopos} to establish that the category $\Et(B)$ is an infinitary pretopos.

\subsection{General properties of étale spaces}
\label{subsec:etale-general}

\begin{definition}\label{def:etale-map}
A continuous map of ultraconvergence spaces $\pi \colon E\to B$ is
\emph{étale} if:
\begin{enumerate}
\item for each $b \in B$, the fiber $\pi^{-1}(b)$ is small;
\item for each $e \in E$ and each ultra-arrow $r \colon \pi(e)
      \ult \lim_{i\to\mu} b_i$ in $B$, there is a unique \emph{lift} $\bar r \colon e
      \ult \lim_{i\to\mu} e_i$ in $E$ such that $\pi(\bar r) = r$.
\end{enumerate}
An \emph{étale space over $B$} is an étale map $\pi \colon E \to B$.
Given two étale spaces $\pi \colon E \to B$ and $\pi' \colon E' \to B$, a
\emph{morphism} $\alpha \colon \pi \to \pi'$ is a continuous map $\alpha \colon
E \to E'$ such that $\pi = \pi' \circ \alpha$. We denote by $\Et(B)$ the
category of étale spaces over $B$.
\end{definition}
\begin{notation}
Let $\pi \colon E \to B$ be étale and $e \in E$. For $r \colon \pi(e)\ult\lim_{i\to\mu} b_i$ an ultra-arrow in $B$, we denote by $r(e) = (r_i(e))_{i\to\mu}$ the target ultra-family of the unique lift $\bar{r}$ of $r$ with domain $e$. 
\end{notation}
\begin{remark}\label{rem:local-homeo-is-etale}
A {local homeomorphism} $\pi \colon E \to B$ between topological spaces,
seen as ultraconvergence spaces in the canonical way, is an étale map in the
sense of Definition \ref{def:etale-map}. 
    As we will see in Remark~\ref{rem:etale-maps-of-top-spaces}, the converse is also true: for any topological space $B$, the étale spaces over the ultraconvergence space associated to $B$ (Construction~\ref{constr:top-sp-as-ultraconv-sp}) are the étale spaces over $B$ in the usual sense.
This may be compared with the characterization of local homeomorphisms 
between topological spaces in terms of ultrafilter convergence 
of~\cite[Thm.~II]{homeo-via-ultra-convergence}. 
Note that, different from that work's definition of \emph{discrete fibration} in terms of ultrafilter convergence (\cite[Sec.~2]{homeo-via-ultra-convergence}), in our definition of étale map, 
ultrafamilies of points are lifted, but the indexing ultrafilter of the lifted family stays the same.
\end{remark}

The following three lemmas, Lemma~\ref{lem:etale-open}, \ref{lem:bijective-étale-is-isomorphism}, \ref{lem:etale-maps-and-pullbacks}, will be used to prove our main result. They generalize analogous properties of local
homeomorphisms between topological spaces to étale maps of ultraconvergence spaces.

\begin{lemma}\label{lem:etale-open}
Let $\pi \colon E \to B$ be an étale map between ultraconvergence spaces. Then the direct image under $\pi$ of any open subspace of $E$ is open.
\end{lemma}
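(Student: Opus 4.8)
The plan is to unwind the definitions of \emph{open subspace} and of the direct image, and to use the unique lifting property of the étale map $\pi$ to transfer openness from $E$ down to $B$. Write $U$ for a given open subspace of $E$ and set $V \coloneqq \pi(U)$. To show that $V$ is open in $B$, I fix a point $b \in V$ together with an ultra-arrow $r \colon b \ult \lim_{i\to\mu} b_i$ witnessing $b \preccurlyeq \lim_{i \to \mu} b_i$, and I must verify that $b_i \in V$ holds $\mu$-eventually.

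Since $b \in V = \pi(U)$, there is some $u \in U$ with $\pi(u) = b$. As $\pi(u) = b$ is the source of $r$, condition (2) of Definition~\ref{def:etale-map} provides a unique lift $\bar r \colon u \ult \lim_{i\to\mu} u_i$ in $E$ with $\pi(\bar r) = r$. Applying the continuity of $\pi$ to $\bar r$ yields the ultra-arrow $\pi(\bar r) \colon \pi(u) \ult \lim_{i\to\mu} \pi(u_i)$; the equality $\pi(\bar r) = r$ therefore says that the two ultrafamilies $(\pi(u_i))_{i\to\mu}$ and $(b_i)_{i\to\mu}$ agree, i.e. $\pi(u_i) = b_i$ holds $\mu$-eventually.

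Now $\bar r$ witnesses $u \preccurlyeq \lim_{i\to\mu} u_i$ in $E$, and $u \in U$. Since $U$ is open, the defining condition gives that $u_i \in U$ holds $\mu$-eventually. Intersecting the two $\mu$-large sets of indices, it holds $\mu$-eventually that both $u_i \in U$ and $\pi(u_i) = b_i$; for every such index we then have $b_i = \pi(u_i) \in \pi(U) = V$. Hence $b_i \in V$ holds $\mu$-eventually, which is precisely the openness condition for $V$, and the proof is complete.

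I do not expect any essential obstacle here: beyond unwinding the definitions, the only ingredient is the unique lifting of ultra-arrows along $\pi$, which is exactly what the étale hypothesis supplies, together with the closure of $\mu$-large sets under finite intersection. In particular, the smallness-of-fibers condition (1) of Definition~\ref{def:etale-map} plays no role in this statement.
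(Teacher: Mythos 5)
Your proof is correct and follows essentially the same argument as the paper's: pick a preimage of $b$ in the open subspace, lift the ultra-arrow uniquely along $\pi$, use openness upstairs, and push back down. The only difference is that you spell out the intersection of $\mu$-large index sets, which the paper leaves implicit.
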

\begin{proof}
Let $V$ be an open subspace of $E$.
    Let $b \in \pi[V]$ be arbitrary, and let $r \colon b \ult \lim_{i \to \mu} b_i$ be an ultra-arrow. Pick $e \in V$ such that $\pi(e) = b$.
Since $\pi$ is étale, there exists a unique ultra-arrow $\bar r \colon e \ult \lim_{i \to \mu}
e_i$ in $E$ such that $\pi(\bar r) = r$. Since $V$ is open, we
    have $e_i \in V$ $\mu$-eventually. Since $(\pi(e_i))_{i \to \mu} = (b_i)_{i \to \mu}$, we conclude that $b_i \in \pi[V]$ $\mu$-eventually. 
\end{proof}
\begin{lemma}\label{lem:bijective-étale-is-isomorphism}
    \looseness=-1 
Let $\pi \colon E \to B$ be an étale map between ultraconvergence spaces.
If $\pi$ is bijective, then it is an isomorphism in $\UltSp$.
\begin{proof}
Let $\sigma \colon B \to E$ be the inverse function of $\pi$. First,
we endow $\sigma$ with a continuity structure. Let $r \colon b \ult
\lim_{i\to\mu} b_i$ be an ultra-arrow in $B$. As $b = \pi(\sigma(b))$ and $\pi$ is
étale, there exists a unique ultra-arrow $\bar r \colon \sigma(b) \ult
\lim_{i\to\mu} e_i$ in $E$ such that $\pi(\bar r) = r$. In particular, this means
that $(\pi(e_i))_{i\to\mu} = (b_i)_{i\to\mu}$, and hence that $(e_i)_{i\to\mu} =
(\sigma(b_i))_{i\to\mu}$; thus, we can define $\sigma(r) \coloneqq \bar r$. With this definition, $\sigma$ is evidently a
continuous map, and by the uniqueness of lifts with respect to $\pi$ it follows
that $\pi(\sigma(r)) = r$ and $\sigma ( \pi ( s )) = s$ for all ultra-arrows $r$ in $B$ and
$s$ in $E$.
\end{proof}
\end{lemma}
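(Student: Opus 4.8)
The plan is to prove Lemma~\ref{lem:bijective-étale-is-isomorphism}: if an étale map $\pi \colon E \to B$ is bijective on points, then it is an isomorphism in $\UltSp$. Since $\pi$ is a continuous map that is bijective on underlying points, it suffices to produce a continuous inverse $\sigma \colon B \to E$ and verify that $\sigma$ and $\pi$ are mutually inverse as continuous maps, i.e.\ inverse both on points and on ultra-arrows. The obvious candidate for $\sigma$ on points is the set-theoretic inverse of the bijection $\pi$; the real content is to equip this function with a continuity structure using the unique lifting property of étale maps, and then to check it is genuinely inverse to $\pi$ also at the level of ultra-arrows.

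First I would define the continuity structure on $\sigma$. Given an ultra-arrow $r \colon b \ult \lim_{i\to\mu} b_i$ in $B$, observe that $b = \pi(\sigma(b))$, so by the étale lifting property (Definition~\ref{def:etale-map}(2)) there is a unique lift $\bar r \colon \sigma(b) \ult \lim_{i\to\mu} e_i$ in $E$ with $\pi(\bar r) = r$. Since $\pi$ is a \emph{continuous} map, $(\pi(e_i))_{i\to\mu} = (b_i)_{i\to\mu}$, and because $\pi$ is a bijection on points this forces $(e_i)_{i\to\mu} = (\sigma(b_i))_{i\to\mu}$; so $\bar r$ is indeed an ultra-arrow of the correct type $\sigma(b) \ult \lim_{i\to\mu} \sigma(b_i)$, and I set $\sigma(r) \coloneqq \bar r$.

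Next I would verify that $\sigma$ is continuous, i.e.\ that it preserves identities, reindexings, and compositions (the three conditions of Definition~\ref{def:continuous-maps}). The cleanest route is to exploit \emph{uniqueness} of lifts rather than computing each structure map by hand: since $\pi$ is continuous and lifts are unique, $\sigma$ must send each structural operation to the unique ultra-arrow lying over the image under $\pi$ of the corresponding operation, and this pins down preservation of $\id$, $r[h]$, and $(s_i)_{i\to\mu}\cdot r$ essentially formally. Concretely, for instance, $\pi(\sigma(r)[h]) = \pi(\sigma(r))[h] = r[h]$ since $\pi$ is continuous, so $\sigma(r)[h]$ is a lift of $r[h]$ starting at $\sigma(b)$, whence by uniqueness it equals $\sigma(r[h])$; the identity and composition cases are analogous.

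Finally, I would check that $\sigma$ and $\pi$ are mutually inverse. On points this is immediate from the bijectivity of $\pi$. On ultra-arrows, $\pi(\sigma(r)) = r$ holds by the defining property $\pi(\bar r) = r$ of the lift. Conversely, for an ultra-arrow $s$ in $E$ with $\pi(s) = r$, the ultra-arrow $\sigma(r)$ is \emph{the} unique lift of $r$ with source $\sigma(\pi(\text{dom}(s))) = \text{dom}(s)$, so $\sigma(\pi(s)) = \sigma(r) = s$ again by uniqueness of lifts. The main (and only mild) obstacle is checking that $\bar r$ genuinely has codomain $(\sigma(b_i))_{i\to\mu}$ rather than some other ultrafamily; this is exactly where bijectivity of $\pi$ on points is used, to cancel $\pi$ from $(\pi(e_i))_{i\to\mu} = (b_i)_{i\to\mu}$. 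Everything else reduces to repeated appeals to uniqueness of étale lifts, so there is no serious combinatorial work.
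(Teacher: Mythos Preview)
Your proposal is correct and follows essentially the same approach as the paper's proof: define $\sigma$ on points as the set-theoretic inverse, use unique étale lifts to define $\sigma$ on ultra-arrows, and appeal to uniqueness of lifts both for continuity of $\sigma$ and for the mutual-inverse identities. If anything, you are more careful than the paper, which simply asserts that ``$\sigma$ is evidently a continuous map'' where you sketch the uniqueness-of-lifts argument for preservation of identities, reindexings, and compositions.
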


\begin{remark}
	By analogy with the topological setting, one might be tempted to phrase Lemma~\ref{lem:etale-open} as saying that `étale maps are open maps', but we refrain from using this terminology since `open map' already has a different, stronger, meaning in topos theory~\cite[Sec.~C.3.1]{elephant}. For instance, let $\mc A$ be the ``one-arrow category'' $\{ \cdot \to \cdot\}$ and let $\mc B$ be the ``two parallel arrows category'' $\{ \cdot \rightrightarrows \cdot\}$. Let $i\colon \mc A\to \mc B$ be any of the two canonical inclusions. Seen as a continuous map $\Alex (\mc A) \to \Alex (\mc B)$, $i$ sends open subspaces to open subspaces; however, the associated geometric morphism $[\mc A,\cSet]\to[\mc B,\cSet]$ is not open in the topos-theoretic sense.%
\end{remark}

\begin{lemma}\label{lem:etale-maps-and-pullbacks}
Étale maps are stable under pullback along any continuous map.
\begin{proof}
Let $\pi \colon E \to B$ be an étale map and let $f \colon Y
\to B$ be a continuous map. Consider the pullback
\[ \begin{tikzcd}
|[label={[label distance=-2mm]-45:\lrcorner}]|{Y\times_B E} & E \\
{Y} & B
\arrow["{\widetilde f}", from=1-1, to=1-2]
\arrow["{\widetilde \pi}"', from=1-1, to=2-1]
\arrow["\pi", from=1-2, to=2-2]
\arrow["f"', from=2-1, to=2-2]
\end{tikzcd} \]
in $\UltSp$, described as in Construction \ref{constr:pullbacks}.
Towards showing that $\widetilde\pi$ is étale, note first that the fiber of $\widetilde
\pi$ at $y\in Y$ is small as it is in bijection with the set $\pi^{-1}(f(y))$.
Let now $p = (y,e) \in Y \times_B E$ and consider an ultra-arrow $r \colon y \ult
\lim_{i\to\mu}y_i$ in $Y$. By continuity of $f$ we have an ultra-arrow $f(r)
\colon f(y) \ult \lim_{i\to\mu} f(y_i)$ in $B$, where by definition $f(y) =
\pi(e)$. Therefore, as $\pi$ is étale, there exists a unique ultra-arrow $\bar
r \colon e \ult \lim_{i\to\mu} e_i$ in $E$ such that $\pi(\bar r) = f(r)$. The
pair $(r,\bar r )$ is therefore the unique ultra-arrow $(y,e) \ult
\lim_{i\to\mu} (y_i, e_i)$ lifting $r$ to $Y \times_B E$.
\end{proof}
\end{lemma}

Recall that a local homeomorphism of topological spaces is \emph{locally injective}, i.e.\
any point in its domain has an open neighborhood on which the restriction is injective. 
This is no longer generally true for étale maps of ultraconvergence spaces. 
In Proposition~\ref{prop:parallel-lifts-and-local-injectivity}, we give two  
equivalent properties for étale maps which generalize local injectivity. 
Crucially, to prove their equivalence we will have to consider \emph{small}
ultraconvergence spaces: this ensures that the set of neighborhoods of a point is small, so that we can consider ultrafilters on it. Note that, if $\pi \colon E \to B$ is étale and $B$ is small,
then $E$ is necessarily small, as its set of points is a small disjoint union of small sets.
\begin{proposition}\label{prop:parallel-lifts-and-local-injectivity}
Let $\pi \colon E \to B$ be an étale map between small ultraconvergence
spaces and let $e \in E$. The following are equivalent:
\begin{enumerate}
    \item \label{it:loc-inj} there exists an open subset $V \subseteq E$ such that $e \in V$ and $\restr{\pi}{V}$ is injective;
    \item \label{it:par-lift} for any parallel pair of ultra-arrows $r,r' \colon \pi(e) \uult \lim_{i\to\mu} b_i$ in $B$, we have $r(e) = r'(e)$.
\end{enumerate}
\end{proposition}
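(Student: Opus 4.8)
The plan is to prove the two implications separately, with $(1)\Rightarrow(2)$ being routine and $(2)\Rightarrow(1)$ carrying all the content.

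For $(1)\Rightarrow(2)$ I would argue directly. Assume $V\subseteq E$ is open with $e\in V$ and $\restr{\pi}{V}$ injective, and let $r,r'\colon \pi(e)\uult\lim_{i\to\mu} b_i$ be a parallel pair. Lifting along the étale map $\pi$ produces the unique ultra-arrows $\bar r\colon e\ult\lim_{i\to\mu}e_i$ and $\bar r'\colon e\ult\lim_{i\to\mu}e_i'$ with $r(e)=(e_i)_{i\to\mu}$ and $r'(e)=(e_i')_{i\to\mu}$; since $\pi(\bar r)=r$ and $\pi(\bar r')=r'$ have the same target, we get $\pi(e_i)=b_i=\pi(e_i')$ $\mu$-eventually. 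Because $V$ is open and $e\in V$, the ultra-arrows $\bar r,\bar r'$ witness $e\preccurlyeq\lim_{i\to\mu}e_i$ and $e\preccurlyeq\lim_{i\to\mu}e_i'$, forcing $e_i\in V$ and $e_i'\in V$ $\mu$-eventually. On the resulting $\mu$-large set we have $e_i,e_i'\in V$ with $\pi(e_i)=\pi(e_i')$, so injectivity of $\restr{\pi}{V}$ yields $e_i=e_i'$, i.e.\ $r(e)=r'(e)$, which is exactly (2).

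For $(2)\Rightarrow(1)$ I would first record a reformulation that isolates the difficulty. Let $P$ be the ultraconvergence space whose points are pairs $(e_1,e_2)$ with $\pi(e_1)=\pi(e_2)$, and whose ultra-arrows $(e_1,e_2)\ult\lim_{i\to\mu}(z_i^1,z_i^2)$ are pairs of ultra-arrows $s^1\colon e_1\ult\lim z_i^1$, $s^2\colon e_2\ult\lim z_i^2$ in $E$ whose images $\pi(s^1),\pi(s^2)$ have the \emph{same target family} (but need not be equal as ultra-arrows); one checks directly that this is an ultraconvergence space, that the projections $p_1,p_2\colon P\to E$ and the diagonal $\Delta\colon E\to P$ are continuous, and I write $P^{\neq}$ for the subspace of pairs with $e_1\neq e_2$. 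Unwinding the definitions, an ultra-arrow $(e,e)\ult\lim(z_i^1,z_i^2)$ landing $\mu$-eventually in $P^{\neq}$ is exactly a parallel pair $r,r'\colon \pi(e)\uult\lim_{i\to\mu}b_i$ with $r(e)=(z_i^1)\neq(z_i^2)=r'(e)$, so condition (2) is equivalent to $(e,e)\notin\cl{P^{\neq}}$. On the other hand, for open $V\subseteq E$ the set $p_1^{-1}(V)\cap p_2^{-1}(V)$ is open in $P$ and is disjoint from $P^{\neq}$ exactly when $\restr{\pi}{V}$ is injective, so condition (1) is equivalent to the existence of an open $V\ni e$ whose ``rectangular'' open $p_1^{-1}(V)\cap p_2^{-1}(V)$ avoids $P^{\neq}$.

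In this language $(1)\Rightarrow(2)$ is immediate, and the entire task of $(2)\Rightarrow(1)$ is to upgrade the single open neighbourhood $P\setminus\cl{P^{\neq}}$ of $(e,e)$ to a rectangular one — this is the step I expect to be the main obstacle. A cheap first instance already uses (2): applied to constant target families it gives $e\notin\cl{F}$ for $F=\pi^{-1}(\pi(e))\setminus\{e\}$, so $V_0\coloneqq E\setminus\cl{F}$ is an open neighbourhood of $e$ over which $\pi$ is injective on the single fibre above $\pi(e)$. The remaining work is to shrink $V_0$ to an open $V$ that is injective over a whole neighbourhood of $\pi(e)$ in $B$, i.e.\ to construct the ``sheet'' through $e$: a continuous local section of $\pi$ on an open $W\ni\pi(e)$ with value $e$ at $\pi(e)$. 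The delicate point is that openness of $V$ demands closure under transport along \emph{all} ultra-arrows, whereas the mere topological accumulation of collisions towards $e$ does not, in a general ultraconvergence space, produce ultra-arrows; it is precisely the full strength of (2) for non-constant target families that makes the transport of $e$ single-valued over nearby points and thus allows the section to be defined coherently. I would carry this out either by a direct transport/closure construction of the section or, more safely, by a Zorn's-lemma argument on partial sections $(W,s)$ through $e$ ordered by extension, using (2) to start and to extend a maximal section, and Lemma~\ref{lem:etale-open} to keep the image $\pi[V]$ open in $B$.
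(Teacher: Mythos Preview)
Your argument for $(1)\Rightarrow(2)$ is correct and is exactly the paper's.

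For $(2)\Rightarrow(1)$, the reformulation via the space $P$ is valid and the equivalence between $(2)$ and $(e,e)\notin\cl{P^{\neq}}$ is correct; the observation that $(2)$ applied to constant targets already gives an open $V_0\ni e$ injective on the single fibre over $\pi(e)$ is also fine. But after that there is no proof: ``a direct transport/closure construction of the section'' and ``a Zorn's-lemma argument on partial sections'' are names for arguments, not arguments. Concretely, for Zorn you would need a non-empty poset to start (a continuous section over \emph{which} open $W\ni\pi(e)$?), upper bounds for chains (why should a union of continuous local sections of an étale map of ultraconvergence spaces again be continuous, when continuity must be checked against \emph{all} ultra-arrows?), and a way to extend a maximal element (transport along \emph{which} ultra-arrow, and why does the extended domain stay open?). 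Condition $(2)$ controls only lifts of parallel pairs \emph{at $e$}, not at nearby points, so it does not obviously feed an extension step. The obstacle you yourself flag as ``the main obstacle'' is left untouched.

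The paper proceeds in the opposite direction, proving the contrapositive of $(2)\Rightarrow(1)$ by an explicit construction. Assuming every open $U\ni e$ contains a collision $a_U\neq b_U$ with $\pi(a_U)=\pi(b_U)$, one takes an ultrafilter $\mathcal U$ on the neighbourhood poset $\mathcal N_e$ refining the down-set filter. For each $i\in\mathcal U$ one checks $e\in\cl{\{a_U:U\in i\}}$, yielding ultra-arrows $r_i\colon e\ult\lim_{j\to\nu_i}a_{f(i,j)}$; these are composed over a second ultrafilter $\lambda$ on $\mathcal U$ into a single ultra-arrow, and then Lemma~\ref{lem:quasi-right-inverse-ultrafilters} is invoked to reindex so that the target becomes $(a_U)_{k\to\kappa;\,U\to\mathcal U}$. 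The same is done for the $b_U$, and a final common reindexing over $\kappa\otimes\kappa'$ puts both ultra-arrows on the same index set. Their images under $\pi$ are then genuinely parallel (since $\pi(a_U)=\pi(b_U)$), while their lifts at $e$ differ, contradicting $(2)$. The essential ingredients---double ultrafilter bookkeeping on $\mathcal N_e$ and the quasi-right-inverse lemma for $\mathbf{UF}$---have no counterpart in your outline.
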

\begin{proof}
    (\ref{it:loc-inj}) $\To$ (\ref{it:par-lift}). Pick $V$ as in (\ref{it:loc-inj}), and let 
    $r,r' \colon \pi(e) \uult \lim_{i\to\mu} b_i$ be a parallel pair of ultra-arrows in $B$. Since $V$ is 
    open, the target ultrafamilies $(r_i(e))_{i \to \mu}$ and $(r'_i(e))_{i \to \mu}$ of the unique lifts of
    $r$ and $r'$, respectively, are both in $V$ $\mu$-eventually. Also, $\pi(r_i(e)) = b_i = \pi(r'_i(e))$ $\mu$-eventually. Since $\restr{\pi}{V}$ is injective, we conclude $r_i(e) = r'_i(e)$ $\mu$-eventually, i.e., $r(e) = r'(e)$.

    (\ref{it:par-lift}) $\To$ (\ref{it:loc-inj}). We prove the contrapositive. Suppose that, for each $U \in \mc N_e$, there exist $a_U \neq b_U$ in $U$ such that $\pi(a_U) = \pi(b_U)$. As the collection $\mc F \coloneqq \{{\downarrow} U \ | \ U \in \mc N_e\}$ is a proper filter on $\mc N_e$, choose an ultrafilter $\mc U$ on $\mc N_e$ which contains $\mc F$.

    We first claim that, for each $i \in \mc{U}$, the point $e$ is in $C_i \coloneqq \cl{\set{a_U \ | \ U \in i}}$ (Construction~\ref{con:closure}). Indeed, if we had $e \not\in C_i$, then $E \setminus C_i$ would be in $\mc{N}_e$, so that ${\downarrow}(E \setminus C_i)$ would be in $\mc{U}$. Now, for any $U \in i$, we have $a_U \in U \cap C_i$, so that $U \not\subseteq E \setminus C_i$, showing that $i$ is disjoint from ${\downarrow}(E \setminus C_i)$, but both would be in $\mc{U}$, which is impossible. 
    Thus, for each $i \in \mc{U}$, we may pick an  ultra-arrow $r_i \colon e \ult \lim_{j \to \nu_i} a_{f(i,j)}$, where $\nu_i$ is an ultrafilter on a set $J_i$, and $f(i,j) \in i$ for each $j \in J_i$. 

For each $i_0 \in \mc{U}$, write ${\downarrow}i_0 \coloneqq \{ i \in \mc{U} \ | \ i \subseteq i_0 \}$, and note that $\mc{G} \coloneqq \{ {\downarrow} i_0 \ | \ i_0 \in \mc{U} \}$ is a proper filter on $\mc{U}$. Choose an ultrafilter $\lambda$ on $\mc{U}$ which contains $\mc{G}$. 
Consider the unique $\UF$-arrow $! \colon (\mc{U}, \lambda) \to (\{\ast\}, 1)$ and the reindexing of $\id_e \colon e \ult \lim_{\ast \to 1} e$ along $!$, which gives an ultra-arrow $\id_e[!] \colon e \ult \lim_{i \to \lambda} e$. The composite $r_a \coloneqq (r_i)_{i \to \lambda} \cdot \id_e[!]$ is then an ultra-arrow $e \ult \lim_{i \to \lambda; j \to \nu_i} a_{f(i,j)}$.

Write $\nu$ for the dependent product  $(i \to \lambda) \otimes \nu_i$, an ultrafilter on the set $J \coloneqq \sum_{i \in \mc{U}} J_i$. We show that $f \colon (J, \nu) \to (\mc{N}_e, \mc{U})$ is an arrow in $\UF$, i.e., that $f(\nu) = \mc{U}$, or equivalently, $f^{-1}(i_0) \in \nu$ for every $i_0 \in \mc{U}$. Let $i_0 \in \mc{U}$ be arbitrary. Then, for each $i \in {\downarrow} i_0$, we have $J_i \subseteq f^{-1}(i_0)$, since, for any $j \in J_i$, $f(i,j) \in i \subseteq i_0$. By definition of $\nu$ and the fact that ${\downarrow}i_0 \in \lambda$, we have $\sum_{i \in {\downarrow}i_0} J_i \in \nu$, so that also $f^{-1}(i_0) \in \nu$.

Applying the same reasoning to the family $(b_U)_{U \to \mc{U}}$, we obtain an ultra-arrow $r_b \colon e \ult \lim_{i\to\lambda;k \to\kappa_i} b_{g(i,k)}$ for some $\UF$-arrow $g \colon (K, \kappa) \to (\mc N_e, \mc U)$. By Lemma~\ref{lem:completing-cospans}, we can then consider a commuting square
\[\begin{tikzcd}
	{(L,\xi)} & {(J,\nu)} \\
	{(K,\kappa)} & {(\mc N_e, \mc U)}
	\arrow["p", from=1-1, to=1-2]
	\arrow["q"', from=1-1, to=2-1]
	\arrow["f", from=1-2, to=2-2]
	\arrow["g"', from=2-1, to=2-2]
\end{tikzcd}\]
in $\UF$; let $h \colon (L, \xi) \to (\mc N_e, \mc U)$ be the diagonal map $f \circ p = g \circ q$. By reindexing $r_a$ and $r_b$ along $p$ and $q$ respectively, we obtain two ultra-arrows $s_a \colon e \ult \lim_{l\to \xi} a_{h(l)}$ and $s_b \colon e \ult \lim_{l\to \xi} b_{h(l)}$. Now $t \coloneqq \pi(s_a)$ and $t' \coloneqq \pi(s_b)$ are a parallel pair of ultra-arrows in $B$ from $\pi(e)$ to $(\pi(a_{h(l)}))_{l \to \xi} = (\pi(b_{h(l)}))_{l \to \xi}$. However, $t(e) = (a_{h(l)})_{l\to\xi}$ and $r'(e) = (b_{h(l)})_{l \to \xi}$, which are distinct because $a_U \neq b_U$ for all $U \in \mc{N}_e$. This proves the negation of (\ref{it:par-lift}), as required.

\end{proof}

\begin{remark}\label{rem:completing-cospans-ack}
    In an earlier draft of this paper, the proof of Proposition~\ref{prop:parallel-lifts-and-local-injectivity} used a direct translation to ultraconvergence spaces of Frayne's theorem \cite[Cor.~4.3.13]{ChaKei90}: given two structures for the same signature $M$ and $N$, the coherent theory of $M$ contains the coherent theory of $N$ if and only if there is a structure homomorphism from $M$ to an ultrapower of $N$.
    Gabriel Saadia and Errol Yuksel suggested to us to replace that argument with the much more elegant Lemma~\ref{lem:completing-cospans}.
\end{remark}

\begin{definition}
An étale map between small ultraconvergence spaces is \emph{locally injective} if it verifies the equivalent conditions of Proposition~\ref{prop:parallel-lifts-and-local-injectivity}.
\end{definition}
\begin{corollary}\label{cor:étale-is-local-homeo}
Let $\pi \colon E\to B$ be a continuous function between topological spaces. Then, $\pi$ is étale in the sense of Definition \ref{def:etale-map} if and only if it is a local homeomorphism.
\begin{proof}
    The `if' direction was already stated in Remark~\ref{rem:local-homeo-is-etale} above. Conversely, if $\pi$ is an étale map in the sense of Definition~\ref{def:etale-map}, then by Lemma \ref{lem:etale-open} it is open as a map of topological spaces. Moreover, $\pi$ trivially satisfies (\ref{it:par-lift}) in Proposition~\ref{prop:parallel-lifts-and-local-injectivity} at each $e \in E$, since 
there is at most one ultra-arrow $\pi(e_0) \ult \lim_{i\to\mu} b_i$ in $E$; thus, it is locally injective, and hence a local homeomorphism.
\end{proof}
\end{corollary}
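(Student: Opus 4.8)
The plan is to prove the two implications separately, relying on the two key structural lemmas about étale maps established just above. The forward direction, that a local homeomorphism is étale, is already recorded in Remark~\ref{rem:local-homeo-is-etale}, so only the converse requires argument. The strategy for the converse is the classical characterization of local homeomorphisms between topological spaces: a continuous map is a local homeomorphism precisely when it is \emph{open} and \emph{locally injective}. I would extract these two properties from the étale hypothesis and then assemble them.

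First I would show that an étale map $\pi$ is open in the topological sense. This is immediate from Lemma~\ref{lem:etale-open}: the direct image under $\pi$ of any open subspace of $E$ is an open subspace of $B$. Since for topological spaces viewed as ultraconvergence spaces the open subspaces coincide with the open sets of the topology (Remark~\ref{rem:top-sp-from-ultraconv-structure}), this says exactly that $\pi$ is an open map.

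Next I would verify local injectivity using Proposition~\ref{prop:parallel-lifts-and-local-injectivity}. The proposition requires the ultraconvergence spaces to be small, which is automatic here since a topological space has an underlying \emph{set} of points. The crucial observation is that the ultraconvergence structure attached to a topological space by Construction~\ref{constr:top-sp-as-ultraconv-sp} is two-valued, so there is at most one ultra-arrow of any given type $x \ult \lim_{i\to\mu} y_i$. Consequently, for every $e \in E$ and any parallel pair $r, r' \colon \pi(e) \uult \lim_{i\to\mu} b_i$ in $B$, one necessarily has $r = r'$, so condition~(\ref{it:par-lift}) holds trivially at each point. The proposition then yields that $\pi$ is locally injective.

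Finally I would combine the two properties. Given $e \in E$, local injectivity supplies an open $V \ni e$ on which $\restr{\pi}{V}$ is injective; since $\pi$ is open and continuous, $\restr{\pi}{V} \colon V \to \pi[V]$ is a continuous open bijection onto the open set $\pi[V]$, hence a homeomorphism, exhibiting $\pi$ as a local homeomorphism. I expect no serious obstacle: the only point needing care is confirming that the smallness hypothesis of Proposition~\ref{prop:parallel-lifts-and-local-injectivity} is satisfied, which is trivial for topological spaces, and recognizing that two-valuedness of the ultraconvergence structure collapses condition~(\ref{it:par-lift}) to a tautology.
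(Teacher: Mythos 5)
Your proposal is correct and follows essentially the same route as the paper: the `if' direction via Remark~\ref{rem:local-homeo-is-etale}, openness via Lemma~\ref{lem:etale-open}, local injectivity via Proposition~\ref{prop:parallel-lifts-and-local-injectivity} using the fact that the two-valued ultraconvergence structure on $B$ makes condition~(\ref{it:par-lift}) vacuous, and then the classical fact that open plus locally injective gives a local homeomorphism. Your explicit checks of the smallness hypothesis and of the identification of open subspaces with topological opens are points the paper leaves implicit, but they do not change the argument.
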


\begin{remark}\label{rem:etale-maps-of-top-spaces}
    In fact, more is true when $B$ is a topological space. If $\pi \colon E \to B$ is an étale map of ultraconvergence spaces, then $E$ itself must also be a topological space seen as an ultraconvergence space in the canonical way, as can be seen using Remark~\ref{rem:top-sp-from-ultraconv-structure}. 
    Moreover, any such $\pi$ is a local homeomorphism by the previous corollary. Thus, in this case, the category $\Et(B)$ is the category of étale spaces over $B$ in the usual sense, which is equivalent to the topos of sheaves over $B$.
\end{remark}

\subsection{A Grothendieck construction for ultraconvergence spaces}
\label{subsec:grothendieck}
Consider a category $\mc C$. Recall that the Grothendieck construction, associating to each functor $\mc C \to \Set$ its category of elements fibered over $\mc C$, realizes an equivalence between the functor category $\Set^{\mc C}$ and the category of \emph{discrete opfibrations} over $\mc C$ having small fibers:
\[ \cat{DiscOpfib}(\mc C) \simeq \Set^{\mc C}.\]
Consider now an ultraconvergence space $\braket{B, \Hom_\ult, \id,\cdot}$. 
The category 
$\UltSp(B,\Set)$ 
of continuous maps from $B$ to the ultraconvergence space $\Set$ 
comes equipped with a forgetful functor to $\Set^{\Sp(B)}$. 
Similarly, every étale map over $B$ is in particular a discrete opfibration over $\Sp(B)$ having small fibers, so that there is also a forgetful functor $\Et(B) \to \cat{DiscOpfib}(\Sp(B))$. 
In this section, we will show that the equivalence $\cat{DiscOpfib}(\Sp(B)) \simeq \Set^{\Sp(B)}$ lifts to an equivalence between $\Et(B)$ and $\UltSp(B,\Set)$, in such a way that the diagram
\[ \begin{tikzcd}
{\Et(B)} & {\UltSp(B, \Set)} \\
    {\cat{DiscOpfib}(\Sp(B))} & {\Set^{\Sp(B)}}
\arrow[""{name=0, anchor=center, inner sep=0}, shift right=0.45em,
from=1-1, to=1-2]
\arrow[from=1-1, to=2-1]
\arrow[""{name=1, anchor=center, inner sep=0}, shift right=0.6em,
from=1-2, to=1-1]
\arrow[from=1-2, to=2-2]
\arrow[""{name=2, anchor=center, inner sep=0}, shift right=0.45em,
from=2-1, to=2-2]
\arrow[""{name=3, anchor=center, inner sep=0}, shift right=0.6em,
from=2-2, to=2-1]
\arrow["\simeq"{description}, draw=none, from=0, to=1]
\arrow["\simeq"{description}, draw=none, from=2, to=3]
\end{tikzcd}\]
commutes. This means that the datum of a continuity structure on a $B$-indexed family of sets is essentially the same as that of an ultraconvergence structure on its category of elements together with an étale continuity structure on the projection to $B$. Note that this indeed extends the usual correspondence, which we recover taking $B$ to be the Alexandroff ultraconvergence space of a category $\mc C$, in which case the two forgetful functors are equivalences.

\begin{remark}
	This correspondence is implicitly used in Makkai's original proof \cite{makkai}, where the Grothendieck construction is applied to a so-called \emph{pre-ultrafunctor} on page 131.
\end{remark}

In Constructions~\ref{con:cont-map-of-etale-space} and \ref{con:etale-space-of-elements}, we define functors $(-)^* \colon \Et(B) \leftrightarrows \UltSp(B, \Set) \colon \int(-)$,  and we then prove in Theorem~\ref{thm:etale-spaces-as-continuous-maps} that they yield an equivalence.

\begin{constr}\label{con:cont-map-of-etale-space}
    We construct a functor $(-)^* \colon \Et(B) \to \UltSp(B, \Set)$.
    For any étale map $\pi \colon E \to B$, the function $\pi^* \colon B \to \Set$ sends $b \in B$ to the fiber $\pi^*(b) \coloneqq \pi^{-1}(b)$. It can be equipped with a natural continuity structure, by sending an ultra-arrow $r \colon b \ult \lim_{i \to \mu} b_i$ in $B$ to the function $\pi^{-1}(b) \to \prod_{i \to \mu} \pi^{-1}(b_i)$ mapping $e \in \pi^{-1}(b)$ to the ultra-family $r(e)$, which, we recall, denotes the target of the unique lift of $r$ along $\pi$ at $e$. 

For any morphism $\alpha \colon \pi_1 \to \pi_2$ in $\Et(B)$, we define $\alpha^* \coloneqq (\restr{\alpha}{\pi_1^*(b)})_{b \in B}$, which is a morphism from $\pi_1^*$ to $\pi_2^*$ in $\UltSp(B, \Set)$. Indeed, for any ultra-arrow $r \colon b \ult \lim_{i \to \mu} b_i$ in $B$ and $e \in \pi_1^*(b)$, we need to show that $\pi_2^*(r) \cdot \alpha_b^*$ and $(\alpha_{b_i}^*)_{i \to \mu} \cdot \pi_1^*(r)$ send $e$ to the same ultrafamily. For this, note that the continuity structure of $\alpha$ must send the unique ultra-arrow $\overline{r}_1$ lifting $r$ along $\pi_1$ at $e$ to the unique ultra-arrow $\overline{r}_2$ lifting $r$ along $\pi_2$ at $\alpha(e)$. In particular, applying $\alpha$ to the target of $\overline{r}_1$ yields the target of $\overline{r}_2$, which is what we needed to show.
\end{constr}
\begin{constr}\label{con:etale-space-of-elements}
    We construct a functor $\int(-) \colon \UltSp(B, \Set) \to \Et(B)$.
    Let $f \colon B \to \Set$ be a continuous map of ultraconvergence spaces. We denote the class of pairs $(b, x)$, where $b \in B$ and $x \in f(b)$, by $\int f$, and we endow it with an ultraconvergence structure, as follows. An ultra-arrow $r \colon (b,x) \ult \lim_{i \to \mu} (b_i,x_i)$ is defined to be an ultra-arrow $r \colon b \ult \lim_{i \to \mu} b_i$ in $B$ such that the function $f(r) \colon f(b) \to \prod_{i \to \mu} f(b_i)$ sends $x$ to $(x_i)_{i \to \mu}$. Note that the identity and composition operations of $B$ then lift to identity and composition operations on $\int f$.
    The continuous map $\pi_f \colon \int f \to B$, given by projection onto the first coordinate, is an étale map by construction: for any $(b, x) \in \int f$ and $r \colon b \ult \lim_{i \to \mu} b_i$, the unique lift of $r$ along $\pi_f$ is the ultra-arrow $r \colon (b,x) \ult \lim_{i \to \mu} (b_i, x_i)$, where $(x_i)_{i \to \mu}$ is the image of $x \in f(b)$ under the function $f(r)$.

    For  any morphism $\alpha \colon f_1 \To f_2$ between continuous maps $f_1, f_2 \colon B \rightrightarrows \Set$, we have, for each $b \in B$, a function $\alpha_b \colon f_1(b) \to f_2(b)$. This family of functions $(\alpha_b)_{b \in B}$ assembles into a function $\int \alpha \colon \int f_1 \to \int f_2$ which satisfies $\pi_{f_2} \circ \left(\int \alpha\right) = \pi_{f_1}$. The continuity structure on $\int \alpha$ sends an ultra-arrow $r \colon (b, x) \ult \lim_{i \to \mu} (b_i, x_i)$ in $\int f_1$ to the unique ultra-arrow $r \colon (b, \alpha_b(x)) \ult \lim_{i \to \mu} (b_i, \alpha_{b_i}(x_i)$ in $\int f_2$, which indeed exists because $f_2(r)(\alpha_b(x)) = (\alpha_{b_i}(x_i))_{i \to \mu}$, using that $\alpha$ is a morphism in $\UltSp(B, \Set)$ and that $r$ is an ultra-arrow in $\int f_1$.
\end{constr}
\begin{theorem}\label{thm:etale-spaces-as-continuous-maps}
    The pair of functors $(-)^* \colon \Et(B) \leftrightarrows \UltSp(B, \Set) \colon \int(-)$ is an equivalence of categories.
\end{theorem}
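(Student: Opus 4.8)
The plan is to exhibit two natural isomorphisms witnessing that the composites $\int(-)\circ(-)^*$ and $(-)^*\circ\int(-)$ are isomorphic to the respective identity functors. Since both $(-)^*$ and $\int(-)$ have already been checked to be functors in Constructions~\ref{con:cont-map-of-etale-space} and~\ref{con:etale-space-of-elements}, this suffices to conclude that they form an equivalence. In each direction the comparison map is the evident one on points, and the entire content of the proof is that the \emph{unique}-lifting property defining étale maps promotes this to a bijection on ultra-arrows as well.

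First I would handle the composite $(-)^*\circ\int(-)$, which is the easier of the two. Given a continuous map $f\colon B\to\Set$, the fiber $(\pi_f)^*(b)=\pi_f^{-1}(b)$ is $\set{(b,x)\mid x\in f(b)}$, so the second projection $(b,x)\mapsto x$ is a bijection $(\pi_f)^*(b)\to f(b)$, manifestly natural in $b$. To see that these components assemble into an isomorphism $\theta_f\colon(\pi_f)^*\To f$ in $\UltSp(B,\Set)$, I would check compatibility with the continuity structures: for an ultra-arrow $r\colon b\ult\lim_{i\to\mu} b_i$, the continuity structure of $(\pi_f)^*$ sends $(b,x)$ to the target of the unique lift of $r$ along $\pi_f$, which by Construction~\ref{con:etale-space-of-elements} is $((b_i,x_i))_{i\to\mu}$ with $(x_i)_{i\to\mu}=f(r)(x)$; transporting along $\theta_f$ recovers exactly $f(r)(x)$, which is how the continuity structure of $f$ acts. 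Naturality of $\theta_f$ in $f$ is immediate from the componentwise definition of $\int\alpha$.

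Next I would treat $\int(-)\circ(-)^*$. For an étale map $\pi\colon E\to B$, the points of $\int(\pi^*)$ are pairs $(b,e)$ with $e\in\pi^{-1}(b)$, i.e.\ with $\pi(e)=b$, so $b$ is redundant and $(b,e)\mapsto e$ defines a bijection $\phi_\pi\colon\int(\pi^*)\to E$ over $B$, with inverse $e\mapsto(\pi(e),e)$. The crux is the passage to ultra-arrows: by definition an ultra-arrow $(b,e)\ult\lim_{i\to\mu}(b_i,e_i)$ in $\int(\pi^*)$ is an ultra-arrow $r\colon b\ult\lim_{i\to\mu} b_i$ in $B$ with $\pi^*(r)(e)=(e_i)_{i\to\mu}$, and $\pi^*(r)(e)$ is precisely the target $r(e)$ of the unique lift $\bar r\colon e\ult\lim_{i\to\mu} e_i$ of $r$ along $\pi$ at $e$. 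Hence the ultra-arrows of $\int(\pi^*)$ out of $(b,e)$ are in bijection with the ultra-arrows of $E$ out of $e$, the correspondence being $r\mapsto\bar r$ in one direction and $s\mapsto\pi(s)$ in the other (here $s$ is automatically the unique lift of $\pi(s)$). Consequently $\phi_\pi$ is continuous with continuous inverse, hence an isomorphism in $\UltSp$; alternatively one may observe that $\phi_\pi$ is a bijective étale map and invoke Lemma~\ref{lem:bijective-étale-is-isomorphism}. Naturality of $\phi_\pi$ in $\pi$ follows because a morphism $\alpha\colon\pi_1\to\pi_2$ of étale spaces acts on ultra-arrows by sending each unique lift along $\pi_1$ to the unique lift along $\pi_2$, so it commutes with the projections.

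The main obstacle is not any single deep step but the careful verification, in the $\int(-)\circ(-)^*$ direction, that the bijection on ultra-arrows genuinely preserves identities, reindexings, and compositions in both directions. All three reduce to a single principle: because lifts along an étale map are unique, the lift of an identity (resp.\ a reindexing, a composite) must coincide with the identity (resp.\ reindexing, composite) of the lifts, since both are lifts of the same ultra-arrow in $B$ with the same source. Once this uniqueness argument is made explicitly, the remaining continuity checks and the two naturality squares are routine diagram chases, and I expect no genuine difficulty beyond organizing them.
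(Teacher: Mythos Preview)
Your proposal is correct and follows essentially the same approach as the paper: both construct the evident componentwise bijections $E\leftrightarrow\int\pi^*$ and $f(b)\leftrightarrow\pi_f^*(b)$ and verify that the unique-lifting property of étale maps makes them isomorphisms of ultraconvergence spaces (resp.\ morphisms of continuous maps), with naturality left as routine. Your write-up is in fact more explicit than the paper's about the bijection on ultra-arrows and the reason identities, reindexings, and compositions are preserved, but the argument is the same.
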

\begin{proof}
We describe the components of the natural isomorphisms between the composite functors $(-)^* \circ \int(-)$ and $\int(-) \circ (-)^*$ and the respective identity functors, and we omit the routine proof that these are natural.

Recall that, as in the usual Grothendieck construction, for any étale map $\pi \colon E \to B$, we have a bijective function
$E \cong \int \pi^*$ between the classes of points, sending $e \in E$ to $(\pi(e), e) \in \int \pi^*$. This bijective function is moreover equipped with a natural continuity structure, sending an ultra-arrow $r \colon e \ult \lim_{i \to \mu} e_i$ to $\pi(r)$, which we note is indeed an ultra-arrow $(\pi(e),e) \ult \lim_{i \to \mu} (\pi(e_i),e_i)$ in $\int \pi^*$.

In the other direction, let $f \colon B \to \Set$ be a continuous map.
For each $b \in B$, we have a bijective function $\alpha_b \colon f(b) \cong \pi_f^*(b)$, which sends $x \in f(b)$ to the point $(b,x) \in \pi_f^{-1}(b)$. Let us show that the family of bijections $(\alpha_b)_{b \in B}$ is a morphism $f \To \pi_f^*$. For any ultra-arrow $r \colon b \ult \lim_{i \to \mu} b_i$ in $B$ and $x \in f(b)$, write $(x_i)_{i \to \mu}$ for the ultrafamily $f(r)(x)$. Note that $(\pi_f^*(r) \cdot \alpha_b)(x) = ((b_i,x_i))_{i \to \mu}$, which is equal to $((\alpha_{b_i})_{i \to \mu} \cdot f(r))(x)$, as required.
\end{proof}

\subsection{The infinitary pretopos of étale spaces over an ultraconvergence space}
\label{subsec:etale-pretopos}

Let $X$ be a topological space seen as an ultraconvergence space in the
canonical way. Recall that the category of étale spaces over $X$ is a topos equivalent to the category of sheaves on $X$. For
an arbitrary ultraconvergence space $X$, as we show in Proposition~\ref{prop:inf-pretopos}, the category
$\Et(X)$ is an infinitary pretopos. 
In other words, the category $\Et(X)$ enjoys all the exactness
properties of a topos but, in general, it may lack a generating set. 

We will prove this in the indexed description of étale spaces
as continuous maps $X \to \Set$ provided by Theorem
\ref{thm:etale-spaces-as-continuous-maps}, instead of the fibrational one. Indeed, all the infinitary-pretopos structure of $\Set^X$ extends to $\UltSp (X,
\Set)$ along the forgetful functor $\UltSp (X , \Set )\to \Set^X$. 
The following proposition, which is similar to \cite[Prop.~5.4.6]{lurie} for ultracategories and which can also be found in \cite[Prop.~5.7]{saadia2025}, makes this precise.
\begin{proposition}
\label{prop:inf-pretopos}
Let $X$ be an ultraconvergence space. The category $\UltSp (X, \Set)$ is an
infinitary pretopos, and the forgetful functor $\UltSp (X , \Set )\to \Set^X$
is a conservative infinitary-pretopos morphism which reflects finite limits, small coproducts, and epimorphisms.
\begin{proof}
We only prove here that $\UltSp (X, \Set)$ admits finite limits, which are both
preserved and reflected by the forgetful functor $U \colon \UltSp (X , \Set
)\to \Set^X$; the rest of the proof proceeds analogously.

Let $D \colon \mc C \to \UltSp(X,\Set)$ be a finite diagram in
$\UltSp(X,\Set)$. 
The limit of the composite diagram $UD \colon \mc C \to \Set^X$ is computed pointwise, i.e., it is $l \colon X \to \Set$ given by $l(x) = \lim_{c \in \mc C} D(c)(x)$ for each $x \in X$.
In order to endow $l$ with a continuity
structure, let $r \colon x \ult \lim_{i\to\mu} x_i$ be an ultra-arrow in $X$.
We need to define an ultra-arrow $l(r) \colon l(x) \ult \lim_{i\to \mu} l(x_i)$
in $\Set$, meaning a function $l(x) \to \prod_{i\to\mu} l(x_i)$, which -- since
finite limits commute with (limits and) filtered colimits in $\Set$ -- means
explicitly
\[ \lim_{\mc C} D(-) (x) \to \prod_{i\to\mu} \lim_{\mc C}D(-)(x_i) \iso
\lim_{\mc C} \left(\prod_{i\to\mu} D(-)(x_i)\right)\]
where $\prod_{i\to\mu} D(-)(x_i)$ is the ultraproduct of functors $\mc C \to
\Set$. Note then that such a function is determined by the universal property
of limits considering, for each $c \in \mc C$, the function $D(c)(x) \to
\prod_{i\to\mu} D(c)(x_i)$ given by the ultra-arrow $D(c)(r) \colon D(c)(x)
\ult \lim_{i\to\mu} D(c)(x_i)$. In other words, $l(r) \colon l(x) \ult
\lim_{i\to\mu} l(x_i)$ is the unique function $l(x) \to \prod_{i\to\mu} l(x_i)$
such that the diagram
\[\begin{tikzcd}
{l(x)} &[20pt] {D(c) (x)} \\
{\prod_{i\to\mu}l(x_i)} & {\prod_{i\to\mu}D(c)(x_i)}
\arrow["{(p_c)_x}", from=1-1, to=1-2]
\arrow[dashed, from=1-1, to=2-1]
\arrow["{D(c)(r)}", from=1-2, to=2-2]
\arrow["{\prod_{i\to\mu}(p_{c})_{x_i}}"', from=2-1, to=2-2]
\end{tikzcd}\]
commutes for each object $c$ in $\mc C$, where $p_c \colon l \implies D(c)$
denotes the projection natural transformation. This makes $l$ a continuous map
$X \to \Set$, and by construction it is the only continuity structure on $l$
such that each projection $p_c$ determines a continuous map $l \implies
D(c)$.

This definition makes $l$ a continuous map $X \to \Set$, and moreover the
projections $p_c \colon l \implies D(c)$ in $\Set^X$ can be identified with
morphisms of continuous maps $l \implies D(c)$, making $l$ (the vertex of) a
cone on $D$. By the universal property of limits, it is then immediate to see
that the universal natural transformation $\phi \colon l' \implies l$ in
$\Set^X$, for a cone $l' \colon X \to \Set$ on $D$, determines a morphism of
continuous maps $\phi \colon l' \implies l$, which is thus the universal
morphism in $\UltSp(X,\Set)$ making $l$ the limit of $D$. By construction, it
is also evident that $U \colon \UltSp(X,\Set) \to \Set^X$ preserves and
reflects finite limits.
\end{proof}
\end{proposition}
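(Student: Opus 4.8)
The plan is to show that the forgetful functor $U \colon \UltSp(X,\Set) \to \Set^X$ \emph{creates} all of the infinitary-pretopos structure, which at once yields that $\UltSp(X,\Set)$ is an infinitary pretopos and that $U$ preserves and reflects the relevant (co)limits. Recall first that $\Set^X = \prod_{x \in X} \Set$ is an infinitary pretopos in which every construction is computed pointwise, inherited from $\Set$. For each such pointwise construction I would produce the \emph{unique} continuity structure on the resulting $X$-indexed family that turns the structural maps --- projections, coproduct inclusions, or quotient maps --- into morphisms of $\UltSp(X,\Set)$, and then verify the appropriate universal property in $\UltSp(X,\Set)$. Uniqueness of the lift is automatic in each case: since the structural maps are jointly monomorphic or jointly epimorphic at the level of underlying families, the value $l(r)$ of the lifted structure on each ultra-arrow $r$ is forced. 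This is precisely creation, and it gives reflection for free; preservation is immediate, as $U$ computes everything pointwise.

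The conceptual heart is how the ultraproduct functor $\prod_{i\to\mu}\colon \Set^I \to \Set$ interacts with each construction, and here there are two regimes. For finite limits and for quotients by equivalence relations the ultraproduct functor \emph{commutes} with the construction: it is coherent, hence preserves finite limits (as used above) and regular epimorphisms, so that $\prod_{i\to\mu}\bigl(f(x_i)/R(x_i)\bigr) \cong \bigl(\prod_{i\to\mu} f(x_i)\bigr)/\bigl(\prod_{i\to\mu} R(x_i)\bigr)$, and the continuity structure is read off through the commutation isomorphism exactly as in the finite-limit case. The genuinely different --- and hardest --- case is that of \emph{small coproducts}, since ultraproducts do \emph{not} preserve them: $\prod_{i\to\mu}\coprod_k f_k(x_i)$ need not coincide with $\coprod_k \prod_{i\to\mu} f_k(x_i)$ when the index set is infinite. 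Here one has only the canonical comparison map induced coordinatewise by the inclusions, and the continuity structure on $l = \coprod_k f_k$ is defined by declaring $l(r)$ to restrict on the $k$-th summand to the composite of $f_k(r)\colon f_k(x)\to \prod_{i\to\mu} f_k(x_i)$ with the inclusion $\prod_{i\to\mu} f_k(x_i) \to \prod_{i\to\mu}\coprod_k f_k(x_i)$. I would then check that this respects identities, reindexings, and compositions (using naturality of the inclusions with respect to the ultraproduct functor), that each inclusion $f_k \to l$ is thereby a morphism, and that the map out of $l$ induced by any competing cocone is again continuous, which is a short diagram chase on summands using continuity of each $f_k \to h$.

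Finally I would deduce the remaining Giraud axioms and the stated properties of $U$. Disjointness and pullback-stability of coproducts, effectiveness of equivalence relations, and pullback-stability of effective epimorphisms are all expressible through finite limits; since they hold in $\Set^X$ and $U$ reflects finite limits, they transfer to $\UltSp(X,\Set)$. Reflection of epimorphisms follows since $U$ is faithful and epimorphisms in $\Set^X$ are the pointwise surjections. Conservativity is direct: if $\phi\colon f \to g$ has underlying family $U\phi$ a pointwise bijection, then the pointwise inverse $\psi$ is automatically a morphism, since from $g(r)\circ\phi_x = \bigl(\prod_{i\to\mu}\phi_{x_i}\bigr)\circ f(r)$ one obtains $f(r)\circ\psi_x = \bigl(\prod_{i\to\mu}\psi_{x_i}\bigr)\circ g(r)$ by composing with the inverses, so $\phi$ is an isomorphism. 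I expect the main obstacle to be exactly the small-coproduct case: unlike finite limits and quotients it cannot be handled by a commutation isomorphism, and verifying the ultraconvergence axioms and the universal property for the inclusion-based continuity structure is where the real work lies.
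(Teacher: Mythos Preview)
Your approach is correct and is essentially the paper's: both lift the pointwise infinitary-pretopos structure from $\Set^X$ by equipping each construction with a continuity structure and verifying the universal property, i.e.\ creation by $U$. The paper only spells out finite limits and dismisses the rest as ``analogous''; your explicit treatment of small coproducts---observing that ultraproducts do not preserve them and defining $l(r)$ summand-by-summand via the comparison map $\prod_{i\to\mu} f_k(x_i)\to\prod_{i\to\mu}\coprod_k f_k(x_i)$---is precisely the detail the paper elides, and your diagram chase for the universal property is the right one.
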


We conclude this section by characterizing subobjects in $\Et(B)$.
\begin{lemma}\label{lem:open-subobjets-of-etale-space}
Let $B$ be an ultraconvergence space and let $\pi\colon E \to B$ be an étale space. The subobjects of $\pi$ in
$\Et(B)$ are precisely the restrictions of $\pi$ to open subspaces of $E$.
\begin{proof}
    Subobjects of $\pi$ are, by definition, isomorphism classes of monomorphisms $m \colon \pi' \rightarrowtail \pi$, or, equivalently, subspaces $U$ of $E$ such that $\pi|_{U} \colon U \to B$ is étale. Let $U$ be a subspace of $E$. 
    Since $\pi$ is étale, we have in particular, for any $e \in U$ and ultra-arrow $r \colon \pi(e) \ult \lim_{i \to \mu} x_i$ in $B$, a unique lift $\bar{r} \colon e \ult \lim_{i \to \mu} e_i$ in $E$. Therefore, the restricted map $\pi|_{U}$ is étale if, and only if, for every $e \in U$ and $r \colon \pi(e) \ult \lim_{i \to \mu} x_i$, the lifted ultrafamily $(e_i)_{i \to \mu}$ is eventually in $U$. This condition is equivalent to $U$ being open: it clearly must hold if $U$ is open, and for the converse, if $s \colon e \ult \lim_{i \to \mu} e_i$ is any ultra-arrow in $E$ with $e \in U$, then applying the condition to $r = \pi(s)$ shows that the $e_i$ are $\mu$-eventually in $U$.
\end{proof}
\end{lemma}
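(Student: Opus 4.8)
The plan is to split the statement into two claims and establish them in turn. The first claim is that, up to isomorphism, every subobject of $\pi$ in $\Et(B)$ is represented by the inclusion of a subspace $U\subseteq E$, carrying the ultraconvergence structure restricted from $E$, such that $\pi|_U\colon U\to B$ is again étale. The second, which is the geometric heart of the statement, is that for a subspace $U\subseteq E$ the restriction $\pi|_U$ is étale if and only if $U$ is open. Granting both, subobjects of $\pi$ are exactly the inclusions $\pi|_U\rightarrowtail\pi$ with $U$ open, as required.

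For the first claim I would begin by pinning down the monomorphisms of $\Et(B)$. The key observation is that a morphism $\alpha\colon\pi'\to\pi$ in $\Et(B)$ is determined by its underlying function on points: given an ultra-arrow $r'\colon e'\ult\lim_{i\to\mu}e'_i$ in $E'$, the ultra-arrow $\alpha(r')$ is forced to be the unique lift of $\pi'(r')$ along $\pi$ with source $\alpha(e')$, since $\pi$ is étale. Hence the forgetful functor $\Et(B)\to\Set^{\Sp(B)}$ is faithful, and, being also an infinitary-pretopos morphism by Proposition~\ref{prop:inf-pretopos}, it preserves monomorphisms; as a mono in $\Set^{\Sp(B)}$ is a pointwise injection, the monomorphisms of $\Et(B)$ are exactly the morphisms injective on points. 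Given such a mono $m\colon E'\rightarrowtail E$, I then set $U:=m(E')$ with the restricted structure and argue that $m\colon E'\to U$ is an isomorphism of ultraconvergence spaces: every ultra-arrow of $U$ is an $E$-ultra-arrow $\rho$ between points of $U$, and lifting $\pi(\rho)$ along the étale map $\pi'$ yields an $E'$-ultra-arrow whose image under $m$ must equal $\rho$ by uniqueness of lifts in $E$, injectivity of $m$ then making this assignment an inverse on ultra-arrows. This exhibits $\pi'\cong\pi|_U$ and, since $\pi'$ is étale, shows $\pi|_U$ is étale as well, reducing the problem to subspace inclusions. (This is the same phenomenon as Lemma~\ref{lem:bijective-étale-is-isomorphism}, which could be invoked once $m\colon E'\to U$ is recognised as étale.)

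For the second claim I would argue directly. Since $\pi$ is étale, for each $e\in U$ and each ultra-arrow $r\colon\pi(e)\ult\lim_{i\to\mu}b_i$ there is a unique lift $\bar r\colon e\ult\lim_{i\to\mu}e_i$ in $E$, and the small-fibre condition for $\pi|_U$ is automatic because $(\pi|_U)^{-1}(b)\subseteq\pi^{-1}(b)$. The restriction $\pi|_U$ is therefore étale precisely when each such $\bar r$ is an ultra-arrow of the subspace $U$, that is, when its target $(e_i)_{i\to\mu}$ lies $\mu$-eventually in $U$. I would then match this against openness: if $U$ is open, any such $\bar r$ starts in $U$ and so has target $\mu$-eventually in $U$; conversely, given any ultra-arrow $s\colon e\ult\lim_{i\to\mu}e_i$ in $E$ with $e\in U$, the unique $E$-lift of $\pi(s)$ at $e$ is $s$ itself, so the étale condition forces $e_i\in U$ $\mu$-eventually, which is exactly the defining property of an open subspace.

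I expect the main obstacle to be the first claim rather than the second. The second is a short unwinding of the lifting condition against the definition of open subspace. The first, by contrast, requires the care noted above: continuity is extra structure, so a continuous bijection need not be invertible, and one must use the étale property of \emph{both} $\pi$ and $\pi'$ (equivalently, Lemma~\ref{lem:bijective-étale-is-isomorphism}) to upgrade the injective monomorphism $m$ to a genuine isomorphism onto its image subspace.
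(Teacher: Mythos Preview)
Your proposal is correct and follows the same route as the paper: reduce to subspace inclusions, then show that $\pi|_U$ is étale precisely when the unique $E$-lift of every base ultra-arrow has target $\mu$-eventually in $U$, which is the definition of openness. The paper simply asserts the first reduction (``or, equivalently, subspaces $U$ of $E$ such that $\pi|_{U}$ is étale'') without argument, whereas you spell it out via Proposition~\ref{prop:inf-pretopos} and the two-sided étale lifting; your second claim is argued exactly as in the paper.
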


\section{Étale spaces over the points of a topos}\label{sec:main}
In this section, we first construct a canonical ultraconvergence structure on any class of points $X$ of a topos $\mc{E}$. We then show in Construction~\ref{con:etale-of-object} how to associate with any object of $\mc{E}$ an étale space over this canonical ultraconvergence space $X$. This association extends to an infinitary-pretopos morphism, denoted by $\sem{-}$, from $\mc{E}$ to $\Et(X)$. The remainder of the section contains the proof of our main theorem, Theorem~\ref{thm:main-theorem}, which states that this morphism is an equivalence provided that $X$ is a separating set of points. We will give a detailed outline of the proof once the required definitions are in place.

\paragraph{The canonical ultraconvergence structure.}Let $\mu$ be an ultrafilter on some set $I$. As recalled in Section \ref{introduction}, by {\L}o{\'s}'s theorem, the ultraproduct functor $\prod_{i\to\mu} \colon \cSet^I \to \cSet$ is coherent, and hence a pretopos morphism. As a consequence, for a
pretopos $\mc T$, the class $\Mod {(\mc T)}$ of \emph{models} of $\mc T$, i.e.\ pretopos morphisms $\mc T \to \cSet$, is equipped with a canonical ultraconvergence structure, which we now describe.

For a $\mu$-family $(x_i)_{i\to\mu}$ of pretopos morphisms $\mc T \to \cSet$, the functor $\prod_{i\to\mu} x_i \colon \mc T \to \Set$ defined by applying the ultraproduct functor pointwise is again a pretopos morphism by {\L}o{\'s}'s theorem. Therefore, defining ultra-arrows $x \ult \lim_{i\to\mu} x_i$ to be natural transformations $x \implies \prod_{i\to\mu} x_i$ endows $\Mod (\mc T)$ with an ultraconvergence structure. This ultraconvergence structure restricts to any class $X$ of models of $\mc T$. In particular, taking $\mc T$ to be a topos $\mc E$ and $X$ to be a class of points of $\Ec$, we obtain the \emph{canonical ultraconvergence structure} on $X$.

\paragraph{Étale spaces from evaluations.}Let $\Ec$ be a topos and $X$ a class of points of $\Ec$, endowed with the canonical ultraconvergence structure: ultra-arrows $x \ult \lim_{i \to \mu} x_i$ are natural transformations $x \To \prod_{i \to \mu} x_i$. 

The evaluation functor $\eval{-} \colon \Ec \to \Set^X$ described in Section~\ref{introduction} lifts to a functor $\eval{-} \colon \Ec \to \UltSp(X,\Set)$: for any object $\phi$ of $\Ec$, its associated function $\eval{\phi} \colon X \to \Set$ carries a canonical continuity structure, sending any ultra-arrow $r \colon x \ult \lim_{i \to \mu} x_i$ to its component $r_\phi$. In the proof of our main theorem, we will mainly work with the \emph{étale space} corresponding to this continuous map $\eval{\phi} \colon X \to \Set$ through the equivalence of Theorem~\ref{thm:etale-spaces-as-continuous-maps}, which we denote $\pi_\phi \colon \sem{\phi} \to X$. For convenience, we now describe this étale space more directly.

\begin{constr}\label{con:etale-of-object}
    Let $\phi$ be an object of $\Ec$. We describe its \emph{associated étale space} \[ \pi_\phi \colon \eval{\phi} \to X \ . \]
The points of the ultraconvergence space $\eval{\phi}$ are pairs $(x, v)$ where $x \in X$ and $v \in x(\phi)$, and the function $\pi_\phi$ is the projection on the first coordinate. 
Given a point $(x, v) \in \eval{\phi}$ and an ultrafamily $(x_i, v_i)_{i\to\mu}$ in $\eval{\phi}$, we define an ultra-arrow $r \colon (x,v) \ult \lim_{i\to\mu} (x_i, v_i)$ to be an ultra-arrow $r \colon x\ult \lim_{i\to\mu} x_i$ in $X$ 
which satisfies $r_\phi (v) = (v_i)_{i\to\mu}$. 
We extend the function $\pi_\phi$ to ultra-arrows by setting $\pi_\phi(r)$ to be $r$ itself, which evidently makes $\pi_\phi$ continuous. 

Note that the map $\pi_\phi$ is étale: indeed, for any ultra-arrow $r \colon \pi_\phi(x, v)\ult \lim_{i\to\mu} x_i$ in $X$, denote by $(v_i)_{i \to \mu}$ the ultra-family $r_\phi(v)$. Then the unique lift of $r$ is the ultra-arrow $r \colon (x,v) \ult \lim_{i\to\mu} (x_i,v_i)$ in $\eval{\phi}$. 
\end{constr}
\begin{remark}
Let $\mathbb T$ be a geometric theory classified by the topos $\Ec$, so that we can think of $X$ as a class of models of $\mathbb T$ with the ultraconvergence structure described in Example \ref{ex:ultraconvergence-space-modT}. Recall that objects of $\Ec$ can be seen as sheaves over the syntactic category of $\mathbb T$, so that we can intuitively think of $\phi$ as a `geometric construction' over the theory $\mathbb T$. Then the fiber of $\pi_\phi \colon \eval{\phi}\to X$ at a model $x \in X$ is precisely the \emph{extension} of the formula $\phi$ in $x$. 
\end{remark}
Observe that, indeed, the continuous map associated by Theorem~\ref{thm:etale-spaces-as-continuous-maps} to the étale space $\pi_\phi$ of Construction~\ref{con:etale-of-object} is the evaluation function $\eval{\phi}\colon X \to \Set$, equipped with its canonical continuity structure. Since each point $x \in X$ is an infinitary-pretopos morphism, so is $\eval{-} \colon \Ec \to \Set^X$. Since the forgetful functor $\UltSp(X,\Set) \to \Set^X$ reflects the infinitary-pretopos structure (Proposition~\ref{prop:inf-pretopos}), it follows that $\sem{-} \colon \Ec \to \UltSp(X,\Set)$ is also an infinitary-pretopos morphism. Composing with the equivalence $\Et(X) \simeq \UltSp(X, \Set)$ of Theorem~\ref{thm:etale-spaces-as-continuous-maps}, we thus obtain an infinitary-pretopos morphism $\Ec \to \Et(X)$, which we also denote by $\eval{-}$. Explicitly, for an arrow $f \colon \phi \to \psi$ in $\Ec$, the associated morphism of étale spaces $\pi_\phi \to \pi_\psi$ is the continuous map sending a point $(x, v) \in \eval{\phi}$ to the point $(x, x(f)(v)) \in \eval{\psi}$, and acting as the identity on ultra-arrows.

A necessary condition for $\eval{-}$ to be an equivalence of categories is conservativity. As the forgetful functor $\UltSp(X,\Set) \to \Set^X$ is conservative, this is equivalent to the evaluation functor $\eval{-} \colon \Ec \to \Set^X$ being conservative, i.e.\ to the class $X$ being separating for the topos $\Ec$. In particular, this means that $\Ec$ must have enough points, so that we can assume without loss of generality that $X$ is a separating \emph{set} of points, as opposed to a proper class.

We are now ready to prove the main theorem of this paper:

\mainresult*

\paragraph{Proof outline.} Our proof of Theorem~\ref{thm:main-theorem} follows the same structure as Makkai's proof for the pretopos setting, see~\cite[Lem.~4.2]{makkai}. We will prove that the functor $\eval{-} \colon \Ec \to \Et(X)$ has the following two properties:
\begin{enumerate}
    \item $\eval{-}$ is \emph{full on subobjects}, i.e.\ for each $\phi$ in $\Ec$ and each subobject $p \colon Y \to X$ of $\pi_\phi \colon \sem{\phi} \to X$ in $\Et(X)$, there exists a subobject $\psi$ of $\phi$ in $\Ec$ such that $\eval{\psi} = p$ in $\Sub_{\Et(X)}(\eval{\phi})$; 
    \item $\eval{-}$ is \emph{covering}, i.e.\ for each étale space $\pi \colon Y \to X$, there exist an object $\phi$ in $\Ec$ and an epimorphism $\alpha \colon \pi_\phi \onto \pi$ in $\Et(X)$. 
\end{enumerate}
Given these two properties, Theorem~\ref{thm:main-theorem} can then be deduced from \cite[Lem.~1.4.9]{makkai-reyes}. For the reader's convenience, we give an explicit proof of this final step. Recall first that a conservative cartesian functor is faithful, and if it is full on subobjects then it is also full (see, e.g., \cite[Lem.~D3.5.6]{elephant}). 
        Therefore, to conclude, it remains to show that $\eval{-}$ is essentially surjective. To this end, let $\pi \colon Y \to X$ be an étale space. 
        As $\eval{-}$ is covering, pick a surjective continuous map $\alpha \colon \eval{\phi}\onto Y$ such that $\pi \circ \alpha = \pi_\phi$ for some $\phi $ in $ \Ec$. 
        Consider the kernel pair of $\alpha$ in $\Et(X)$, which we can identify with the subspace $R \coloneqq \set{ (a,b) \in \eval{\phi}^2 | \alpha(a) = \alpha(b)}$ of $\eval{\phi} \times_X \eval{\phi}$ together with the restriction of the étale map $\pi_\phi \times \pi_\phi \colon \eval{\phi} \times_X \eval{\phi} \to X$ to $R$. 
        As $\eval{-}$ preserves finite products and is full on subobjects, there exists a subobject $\rho \rightarrowtail \phi \times \phi$ in $\Ec$ such that $R = \eval{\rho}$. 
        Note then that $\eval{-}$ reflects equivalence relations, and hence $\rho$ is an equivalence relation in $\Ec$. 
        Thus, $\rho \rightarrowtail \phi \times \phi$ is the kernel pair of its coequalizer $\phi \onto \phi/\rho$, which $\eval{-}$ maps to $\alpha$; in particular, $Y = \eval{\phi/\rho}$.

Below, we will complete the proof by establishing that $\eval{-}$ is indeed full on subobjects (Proposition~\ref{prop:step-1}) and covering (Proposition~\ref{prop:step-2}).\medskip

\begin{tcolorbox}[emphbox]
\textbf{Assumption.} Throughout the remainder of this section, we fix a topos $\Ec$, a separating set $X$ of points of $\Ec$ endowed with the canonical ultraconvergence structure, and a small full subcategory $\mc{S}$ of $\Ec$ which is dense and closed under finite limits.
\end{tcolorbox}

\subsection{Fullness on subobjects}
The aim of this subsection is to prove Proposition~\ref{prop:step-1}, stating that $\sem{-}$ is full on subobjects. The proof strategy is to first prove it in the special case of subobjects of the terminal object (Lemma~\ref{lem:every-open-is-eval}), from which we then deduce the general fact by slicing.

The étale space associated to the terminal object $1$ of $\Ec$ is the identity map $1_X \colon X \to X$, which is indeed terminal in $\Et(X)$.
Restricting the functor $\eval{-} \colon \Ec \to \Et(X)$ to subterminals thus yields a function 
\[ \eval{-} \colon \Sub_{\Ec}(1) \to \Sub_{\Et(X)}(1_X),\]
which is a frame homomorphism since $\eval{-}$ is an infinitary-pretopos morphism,
and is injective since $\eval{-}$ is conservative.
By applying Lemma~\ref{lem:open-subobjets-of-etale-space} to the étale space $\id_X \colon X \to X$, 
the frame $\Sub_{\Et(X)}(1_X)$ can be identified with $\Open(X)$.
The main technical result of this subsection,
Lemma~\ref{lem:every-open-is-eval} below, says
that $\eval{-} \colon \Sub_{\Ec}(1) \to \Open(X)$ is also surjective,
and therefore an isomorphism $\Sub_{\Ec}(1) \iso \Open(X)$. As we will see in Proposition~\ref{prop:step-1},
this will suffice to ensure that $\eval{-} \colon \Ec \to \Et(X)$ is full on
subobjects by considering slice toposes.

In order to explain the model-theoretic intuition behind Lemma~\ref{lem:every-open-is-eval}, recall from Example~\ref{ex:ultraconvergence-space-modT} that we may think of $X$ as a set of models for a geometric theory $\mathbb{T}$. A subset $U$ of $X$ is then \emph{open} if, and only if, for any model $x$ in $U$ and any ultra-family of models $(x_i)_{i \to \mu}$ in $X$, if $x$ admits a homomorphism to $\prod_{i \to \mu} x_i$, then $x_i$ is $\mu$-eventually in $U$. It is also clear from this point of view that $\sem{\phi}$ is open for any geometric sentence $\phi$. The surjectivity of $\sem{-}$ means that, conversely, any such open class $U$ of models can be defined by a geometric sentence in the theory $\mathbb{T}$. 

\looseness=-1 
In the proof of Lemma~\ref{lem:every-open-is-eval}, we use the following. For any $x \in X$, if $\phi$ is subterminal in $\mc{E}$, then $x(\phi)$ is subterminal in $\Set$, and we have $x \in \sem{\phi}$ if, and only if, $x(\phi)$ is non-empty. We thus have, for any $x \in X$, a completely prime filter 
$F_x \coloneqq \set{ \phi \in \Sub_{\mc{E}}(1)  | x \in \sem{\phi} }$ in $\Sub_{\Ec}(1)$.

\begin{lemma}\label{lem:every-open-is-eval}
    Let $U$ be an open subspace of $X$. There exists $\phi_U \in \Sub_{\Ec}(1)$ such that $\sem{\phi_U} = U$.
\end{lemma}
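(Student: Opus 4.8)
The plan is to define $\phi_U$ as the join in the frame $\Sub_{\Ec}(1)$ of all subterminals whose extension is contained in $U$, and then to prove the two inclusions $\sem{\phi_U} \subseteq U$ and $U \subseteq \sem{\phi_U}$ separately. Concretely, I would set $\phi_U \coloneqq \bigvee \{\phi \in \Sub_{\Ec}(1) \mid \sem{\phi} \subseteq U\}$, which exists since $\Sub_{\Ec}(1)$ is a frame. Because $\sem{-} \colon \Sub_{\Ec}(1) \to \Open(X)$ is a frame homomorphism, it preserves this join, so $\sem{\phi_U} = \bigcup\{\sem{\phi} \mid \sem{\phi}\subseteq U\} \subseteq U$, giving the first inclusion for free. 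The content is the reverse inclusion, which I would prove by contradiction: fix $x \in U$ and suppose $x \notin \sem{\phi_U}$. Unwinding the definitions, this says exactly that every subterminal $\phi \in F_x$ satisfies $\sem{\phi} \cap C \neq \emptyset$, where $C \coloneqq X \setminus U$ is the closed complement of $U$. The goal is then to manufacture from this assumption an ultra-arrow exhibiting $x$ as a limit of a $\mu$-family lying in $C$, which contradicts $x \in U$ since $C = \cl{C}$ by Construction~\ref{con:closure}.

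To build such an ultra-arrow, recall that an ultra-arrow $x \ult \lim_{i\to\mu} y_i$ is by definition a natural transformation $x \To \prod_{i\to\mu} y_i$, and that, by Lemma~\ref{lem:extension-from-a-dense-subcategory}, it suffices to construct such a natural transformation on the dense subcategory $\mc S$, since $x$ preserves all small colimits. By Lemma~\ref{lem:cofiltered-category-of-elements}, the category of elements $\int (x|_{\mc S})$ is cofiltered, and a natural transformation $x|_{\mc S} \To M|_{\mc S}$ into any functor $M$ is the same as a compatible family $(\xi_{(s,a)})_{(s,a)}$ with $\xi_{(s,a)} \in M(s)$ indexed by this cofiltered category. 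I would take the index set $I$ of the ultrafilter to be the set of finite subdiagrams $F$ (finitely many objects and morphisms) of $\int (x|_{\mc S})$; one may first replace $\int(x|_{\mc S})$ by a cofiltered poset via Lemma~\ref{lem:initial-functor-cofiltered} to streamline the bookkeeping. For each such $F$, cofilteredness provides a cone with vertex $(s_F, a_F)$; since $a_F \in x(s_F)$, the image of the unique map $s_F \to 1$ is a subterminal $\phi_F$ with $x \in \sem{\phi_F}$, i.e.\ $\phi_F \in F_x$. The standing assumption then yields a model $y_F \in \sem{\phi_F} \cap C$, and $y_F \in \sem{\phi_F}$ forces $y_F(s_F) \neq \emptyset$; I pick a witness $b_F \in y_F(s_F)$ and push it forward along the cone legs to define values $\xi^F_d \coloneqq y_F(\mathrm{leg}_d)(b_F) \in y_F(s_d)$ at every object $d$ of $F$, which are compatible over the morphisms of $F$ by the cone condition. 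Choosing an ultrafilter $\mu$ on $I$ that contains, for every finite $F_0$, the set $\{F \mid F \supseteq F_0\}$ — these sets have the finite intersection property — the $\mu$-classes $\xi_d \coloneqq (\xi^F_d)_{F \to \mu} \in \prod_{F\to\mu} y_F(s_d)$ are then $\mu$-eventually compatible, hence form a genuine compatible family. Extending along $\mc S \hookrightarrow \Ec$ gives the desired natural transformation $x \To \prod_{F\to\mu} y_F$, i.e.\ an ultra-arrow $x \ult \lim_{F\to\mu} y_F$ whose target family lies entirely in $C$.

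With this ultra-arrow in hand, $x \preccurlyeq \lim_{F\to\mu} y_F$ with $(y_F)_{F\to\mu}$ a family in $C$, so $x \in \cl{C} = C$, contradicting $x \in U$; therefore $x \in \sem{\phi_U}$, and $U \subseteq \sem{\phi_U}$ as required. The main obstacle is the construction of the ultra-arrow in the second paragraph: it amounts to realizing the entire cofiltered diagram of $x$ inside an ultraproduct of models drawn from $C$, whereas the assumption only supplies $C$-models with nonempty fibers over each single vertex $s_F$, that is, realizations of arbitrarily small \emph{finite} pieces of the diagram. The role of the ultrafilter is precisely to glue these finite approximations into a single coherent realization in the ultraproduct. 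A point I would check carefully is that compatibility of the family, once arranged $\mu$-eventually through the cone conditions, automatically forces the resulting transformation to respect all subobjects (the \emph{positive type} of each element), so that no separate argument realizing subobjects of $x$ in the $y_F$ is needed.
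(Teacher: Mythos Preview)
Your proposal is correct and follows essentially the same approach as the paper: both define $\phi_U$ as the join of all subterminals with extension contained in $U$, then for $x$ not in $\sem{\phi_U}$ construct an ultra-arrow $x \ult \lim y_i$ with the $y_i$ in $X\setminus U$ by exploiting cofilteredness of $\int(x|_{\mc S})$, picking for each index a witness model together with a point in the relevant fiber, and pushing these points forward to build the components of the natural transformation. The only cosmetic difference is that the paper indexes directly over a cofiltered poset obtained from Lemma~\ref{lem:initial-functor-cofiltered} (which you mention as an alternative), whereas you index over finite subdiagrams; your variant has the minor advantage that no separate well-definedness check is needed, since the cone and witness are fixed per index.
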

\begin{proof}
    Consider $\phi_U \coloneqq \bigvee \set{ \phi \in \Sub_{\Ec}(1) \colon \sem{\phi} \subseteq U}$, a subterminal object of $\Ec$. Since the map $\sem{-} \colon \Sub_{\Ec}(1) \to \Open(X)$ is a frame homomorphism, the equality $\sem{\phi_U} = U$ is equivalent to
\begin{equation*}
\bigcup \set{ \sem{\phi} \subseteq X |\phi \in \Sub_{\mc{E}}(1)
\colon \sem{\phi} \subseteq U} = U.
\end{equation*}

    The left-to-right inclusion is clear. For the right-to-left inclusion, we will show the contrapositive. Let $x \in X$ and suppose that, for every $\phi \in \Sub_{\mc{E}}(1)$, if $\sem{\phi} \subseteq U$, then $x \not\in \sem{\phi}$. Equivalently, for every $\phi \in F_x$, the set $\sem{\phi} \setminus U$ is non-empty.
We will show $x \not\in U$.

Write $\mc{I} \coloneqq \int \restr{x}{\mc{S}}$
for the category of elements of the restriction of the point $x \colon \Ec \to
\Set$ to $\mc{S}$, which is small as so is $\mc S$. By Lemma
\ref{lem:cofiltered-category-of-elements}, as $\mc S$ is finitely complete,
$\mc{I}$ is cofiltered. By Lemma \ref{lem:initial-functor-cofiltered}, 
pick a cofiltered poset $\mc{P}$
and an initial functor $d \colon \mc{P} \to \mc{I}$. For each $p \in \mc P$,
let us write $d(p) \coloneqq (\psi_p, v_p)$, where $\psi_p$ is an object of
$\mc{S}$ and $v_p$ is an element of the set $x(\psi_p)$.

Let $p \in \mc{P}$. Write $\exists \psi_p$ for the image of the unique
arrow $!_{\psi_p} \colon \psi_p \to 1$. Note that $\exists \psi_p$ is in $F_x$: 
    $v_p$ is in the domain of the function $x(!_{\psi_p})$, so its image $x(\exists \psi_p)$ is non-empty.  
    Therefore, by hypothesis we can choose a point
$x_{p} \in \sem{\exists \psi_p} \setminus U$. Since $x_p$ sends the epimorphism
    $\psi_p \onto \exists \psi_p$ in $\mc{E}$ to a surjective function, we can also pick
    an element $w_p \in x_p(\psi_p)$.

    Choose an ultrafilter $\mu$ on $\mc P$ which contains every principal down-set of $\mc{P}$; such an ultrafilter exists because $\mc{P}$ is cofiltered. In order to construct an ultra-arrow $r \colon x \ult \lim_{p \to \mu} x_p$, by Lemma~\ref{lem:extension-from-a-dense-subcategory}, it suffices to define a natural transformation $r \colon x|_{\mc{S}} \To \prod_{p \to \mu} x_p|_{\mc{S}}$, as we will do now. 

Let $\psi$ be an arbitrary object of $\mc{S}$ and let $u \in x(\psi)$.
By the initiality of $d \colon \mc{P} \to \mc{I}$, 
we can pick $p \in \mc{P}$ and an arrow
$f \colon (\psi_p, v_p) \to (\psi,u)$ in $\mc I$. 
For each
$q \leq p$, we then also have an arrow $\restr f q \coloneqq f \circ d(q \leq
    p) \colon (\psi_q, v_q) \to (\psi, u)$ in $\mc{I}$, which means that $\restr{f}{q} \colon \psi_q \to \psi$ is an arrow in $\mc{E}$ such that $x(\restr{f}{q})(v_q) = u$. We define
$r_\psi(u)$ to be the $\mu$-family determined by the sequence $(x_q(\restr f
q)(w_q))_{q \leq p}$; this defines an element of the ultraproduct
$\prod_{p \to \mu} x_p(\psi)$ since ${\downarrow} p \in \mu$.
To see that this definition does not depend on the choice of $f$, let $f'
\colon (\psi_{p'}, v_{p'}) \to (\psi, u)$ be an arrow in $\mc{I}$, for some $p'
\in \mc{P}$. By initiality of $d$, pick $p_0 \in \mc{P}$ with $p_0 \leq p$ and
$p_0 \leq p'$ such that $\restr f {p_0}  = \restr {f'}{p_0}$. Then, for any $q
\leq p_0$, we have $\restr f q = \restr{f'}q$, so that the sequences
$(x_q(\restr f q)(w_q))_{q \leq p}$ and $(x_q(\restr{f'}q)(w_q))_{q \leq p'}$
coincide on the set ${\downarrow} p_0$, which is in $\mu$.

Finally, we prove that $r$ is natural in $\psi$. Let $g \colon \psi \to
\psi'$ be an arrow in $\mc{S}$. We will show that the following diagram
commutes:
\begin{equation}\label{eq:naturality-r}
\begin{tikzcd}
x(\psi) \ar[r, "r_\psi"] \ar[d, "x(g)"'] & \prod_{p \to \mu} x_p(\psi)
\ar[d, "\prod_{p \to \mu} x_p(g)"] \\
x(\psi') \ar[r, "r_{\psi'}"'] & \prod_{p \to \mu} x_p(\psi').
\end{tikzcd}
\end{equation}
Let $u \in x(\psi)$. Write $u' := x(g)(u)$, so that $g$ determines an arrow
$(\psi, u) \to (\psi', u')$ in $\mc{I}$. According to the definition above, we have, for any arrows $f \colon (\psi_p,v_p) \to (\psi, u)$ and $f' \colon (\psi_{p'}, v_{p'}) \to (\psi',u')$ in $\mc{I}$,
\[ r_\psi(u) = \left( x_q(\restr f q)(w_q) \right)_{q \to \mu} \quad \text{ and } \quad r_{\psi'}(u') = \left(x_q(\restr{f'}q)(w_q)\right)_{q \to \mu} \ .\]
Taking $f' = g \circ f$, we obtain
\[ r_{\psi'}(u') = \left( x_q(g \circ \restr f q)(w_q) \right)_{q\to\mu} =
\Big( x_q(g)(r_\psi(u)) \Big)_{q\to\mu}, \]
which is precisely the square \eqref{eq:naturality-r} at $u \in x(\psi)$.

To conclude, note that we have constructed an ultra-arrow $r \colon x \ult \lim_{p \to \mu} x_p$. Since $x_p \not\in U$ for every $p \in \mathcal{P}$, it follows that $x \not\in U$, since $U$ is open.
\end{proof}

In order to prove fullness on subobjects, we now apply Lemma~\ref{lem:every-open-is-eval} in slice toposes of $\mc{E}$. 
\begin{proposition}\label{prop:step-1}
The functor $\eval{-}\colon \Oc(X) \to \Et(X)$ is full on subobjects.
\end{proposition}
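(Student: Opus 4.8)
The plan is to reduce the general case to the subterminal case already treated in Lemma~\ref{lem:every-open-is-eval}, by passing to slice toposes. Fix an object $\phi$ of $\Ec$. By Lemma~\ref{lem:open-subobjets-of-etale-space}, a subobject of $\pi_\phi \colon \sem{\phi} \to X$ in $\Et(X)$ is the same datum as an open subspace $V$ of the total space $\sem{\phi}$, so the goal is to produce, for each such $V$, a subobject $\psi \rightarrowtail \phi$ in $\Ec$ with $\eval{\psi} = V$ as subobjects of $\sem{\phi}$.

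First I would pass to the slice topos $\Ec/\phi$. Its terminal object is $\id_\phi$, and its subterminal objects are exactly the subobjects of $\phi$ in $\Ec$, so $\Sub_{\Ec/\phi}(1) \cong \Sub_{\Ec}(\phi)$. Recall that a point of $\Ec/\phi$ is the same datum as a point $x$ of $\Ec$ together with an element $v \in x(\phi)$, the corresponding inverse image sending an object $a \colon A \to \phi$ to the fiber $x(a)^{-1}(v)$. I would take $X_\phi \coloneqq \set{(x,v) \ | \ x \in X, \ v \in x(\phi)}$, which is a small set (a small coproduct of the small sets $x(\phi)$), and check that it is separating for $\Ec/\phi$: if a morphism over $\phi$ is sent by every $(x,v)$ to a bijection, then each $x$ sends it to a bijection, since the fibers over the various $v \in x(\phi)$ partition the evaluations; conservativity of $X$ on $\Ec$ then forces an isomorphism. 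As $\Ec/\phi$ is a topos admitting a small dense finitely complete subcategory, Lemma~\ref{lem:every-open-is-eval}, whose proof uses only these data, applies to $\Ec/\phi$ with $X_\phi$.

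The crux of the argument, and the step I expect to be the main obstacle, is to identify the canonical ultraconvergence space $X_\phi$ of $\Ec/\phi$ with the étale total space $\sem{\phi}$ of Construction~\ref{con:etale-of-object}. On points these classes are literally equal, so I must check that their ultra-arrows coincide. An ultra-arrow in $X_\phi$ from $(x,v)$ to $(x_i,v_i)_{i\to\mu}$ is a natural transformation $r$ between the slice points, that is, a family of maps on fibers $x(a)^{-1}(v) \to \prod_{i\to\mu} x_i(a)^{-1}(v_i)$ natural in $a \colon A \to \phi$; an ultra-arrow in $\sem{\phi}$ is a natural transformation $s \colon x \To \prod_{i\to\mu} x_i$ with $s_\phi(v) = (v_i)_{i\to\mu}$. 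From $s$ I obtain $r$ by restricting each component $s_A$ to fibers, which lands in the correct target precisely because of the marking condition $s_\phi(v) = (v_i)_{i\to\mu}$ together with naturality of $s$. Conversely, from $r$ I recover $s$ by evaluating at the second projections $\mathrm{pr} \colon A \times \phi \to \phi$, using the canonical identifications $x(\mathrm{pr})^{-1}(v) \cong x(A)$; naturality of $s$ follows from that of $r$ along the maps $g \times \id_\phi$, and the marking condition is obtained from naturality of $r$ along the diagonal $\Delta \colon \phi \to \phi \times \phi$, regarded as a morphism $\id_\phi \to \mathrm{pr}$ in $\Ec/\phi$. These two assignments are mutually inverse and respect identities, reindexings, and composition, so they constitute an isomorphism of ultraconvergence spaces $X_\phi \cong \sem{\phi}$.

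With this identification in hand, an open subspace $V$ of $\sem{\phi}$ is an open subspace of the canonical ultraconvergence space $X_\phi$, and Lemma~\ref{lem:every-open-is-eval} applied to $\Ec/\phi$ yields a subterminal $\psi \in \Sub_{\Ec/\phi}(1) \cong \Sub_{\Ec}(\phi)$ with $\sem{\psi}_{\Ec/\phi} = V$. It then remains to verify that this equality, computed in the slice, translates back to $\eval{\psi} = V$ in $\Sub_{\Et(X)}(\sem{\phi})$: since $\psi \rightarrowtail \phi$ is monic, $x(\psi) \to x(\phi)$ is injective for each $x$, and one computes that both $\sem{\psi}_{\Ec/\phi}$ and $\eval{\psi}$ equal $\set{(x,v) \ | \ v \in x(\psi) \subseteq x(\phi)}$ as subspaces of $\sem{\phi}$. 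This produces the required subobject $\psi$ of $\phi$ and completes the proof that $\eval{-}$ is full on subobjects.
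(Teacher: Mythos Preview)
Your proposal is correct and follows essentially the same approach as the paper's proof: reduce to the subterminal case by passing to the slice topos $\Ec/\phi$, take the separating set $X_\phi = \{(x,v)\mid x\in X,\ v\in x(\phi)\}$, and apply Lemma~\ref{lem:every-open-is-eval} there. The paper's proof simply asserts that $X_\phi$ ``coincides with the set $\eval{\phi}$'' and proceeds, whereas you identify the comparison of the canonical ultraconvergence structure on $X_\phi$ with the étale-space structure on $\eval{\phi}$ as the nontrivial step and sketch a proof via restriction to fibers in one direction and evaluation at the projections $A\times\phi\to\phi$ in the other; this extra care is warranted, and your outline of the bijection on ultra-arrows is sound.
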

\begin{proof}
Let $\phi $ be an object of $\Ec$ and let $U \subseteq \eval{\phi}$ be an open subspace. 
Recall (e.g.\ by~\cite[Prop.~IV.5.12]{SGA4}) that points of the
slice topos $\Ec/\phi$ may be identified with pairs $(e, u)$ where $e$ is a point of $\Ec$ 
and $u \in e(\phi)$. Note that the set of such pairs whose
first component lies in $X$ is separating for $\Ec/\phi$, and coincides with the set
$\eval{\phi}$ underlying the étale space associated with $\phi$. 
Write $\eval{-}_\phi \colon \Ec/\phi \to \Et(\eval{\phi})$ for the evaluation functor
associated with the topos $\Ec/\phi$ and the set of points $\eval{\phi}$. 
Applying Lemma~\ref{lem:every-open-is-eval} in this case, there exists a subterminal $\psi$ in $\Ec/\phi$ such that $U = \eval{\psi}_{\phi}$. The subterminal $\psi$ in $\Ec/\phi$ may be identified with a subobject $\psi$ of $\phi$, and we then have that the subspace $\sem{\psi}$ of $\sem{\phi}$ coincides with $\eval{\psi}_{\phi} = U$.
\end{proof}

\subsection{Covering}
We will now prove that $\sem{-}$ is covering. 
The crucial technical point is Lemma~\ref{lem:local-injective-pullback} below.
In light of Proposition~\ref{prop:parallel-lifts-and-local-injectivity}, this lemma will show
that any étale space over $X$ can be `locally trivialized' by taking a pullback with an object
of the form $\sem{\phi}$. Here, instantiating Construction~\ref{constr:pullbacks} 
when $\pi \colon Y \to X$ is an étale space over $X$ and $\phi$ is an object of $\Ec$, the pullback
\[ \begin{tikzcd}
  |[label={[label distance=-2mm]-45:\lrcorner}]|{Y \times_X \sem{\phi}} & {\eval{\phi}} \\
Y & X
\arrow["\widetilde{\pi_\phi}",from=1-1, to=2-1]
\arrow["\widetilde{\pi}"', from=1-1, to=1-2]
\arrow["\pi", from=2-1, to=2-2]
\arrow["{\pi_\phi}"', from=1-2, to=2-2]
\end{tikzcd} \]
consists of pairs $(y, a)$ with $a \in \pi(y)(\phi)$.

Lemma~\ref{lem:local-injective-pullback} below is analogous to \cite[Lem.~4.4]{makkai}, where a pair $(\phi,a)$ as in the statement of the lemma is called a \emph{support} of $y$.
\begin{lemma}\label{lem:local-injective-pullback}
    Let $\pi \colon Y \to X$ be an étale space, $x \in X$, and $y \in \pi^{-1}(x)$. There exist an object $\phi$ of $\Ec$ and $a \in x(\phi)$ such that the pullback $\widetilde{\pi} \colon Y \times_X \sem{\phi} \to \sem{\phi}$ is locally injective at $(y, a)$.
\end{lemma}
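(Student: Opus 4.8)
The plan is first to reduce local injectivity of $\widetilde\pi$ at $(y,a)$ to a condition living entirely over $X$, and then to establish that condition by an ultraproduct argument modelled on Lemma~\ref{lem:every-open-is-eval}. By Lemma~\ref{lem:etale-maps-and-pullbacks} the map $\widetilde\pi \colon Y\times_X \sem\phi \to \sem\phi$ is étale, so I would test local injectivity at $(y,a)$ through criterion~(\ref{it:par-lift}) of Proposition~\ref{prop:parallel-lifts-and-local-injectivity}. Unwinding Constructions~\ref{constr:pullbacks} and~\ref{con:etale-of-object}, a parallel pair of ultra-arrows $(x,a)\uult\lim_{i\to\mu}(x_i,a_i)$ in $\sem\phi$ is exactly a parallel pair $r,r'\colon x\uult\lim_{i\to\mu}x_i$ in $X$ with $r_\phi(a)=r'_\phi(a)=(a_i)_{i\to\mu}$, and the lifts of these through $\widetilde\pi$ at $(y,a)$ have targets $(r(y),a)$ and $(r'(y),a)$. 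Hence $\widetilde\pi$ is locally injective at $(y,a)$ precisely when, for every such parallel pair, $r(y)=r'(y)$; the lemma thus amounts to finding $\phi$ and $a\in x(\phi)$ for which agreement of the $\phi$-components at $a$ forces agreement of the two lifts at $y$.

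I would prove the existence of such a $\phi$ by contradiction, looking for it among the objects of $\mc S$. Assume that for every $\psi\in\mc S$ and every $u\in x(\psi)$ the map $\widetilde\pi$ fails to be locally injective at $(y,u)$; equivalently, for each object $(\psi,u)$ of the category $\mc I\coloneqq\int\restr x{\mc S}$---cofiltered by Lemma~\ref{lem:cofiltered-category-of-elements}---there is a parallel pair $r,r'\colon x\uult\lim x_i$ agreeing at $(\psi,u)$ yet with $r(y)\neq r'(y)$. Using Lemma~\ref{lem:initial-functor-cofiltered} I would fix a cofiltered poset $\mc P$ and an initial functor $d\colon\mc P\to\mc I$, write $d(p)=(\psi_p,u_p)$, and abbreviate the chosen witnesses at $d(p)$ by $r_p,r'_p\colon x\uult\lim_{i\to\mu_p}x^{(p)}_i$. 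Choosing an ultrafilter $\lambda$ on $\mc P$ containing every principal down-set, I would assemble them into the parallel pair $R\coloneqq(r_p)_{p\to\lambda}\cdot\id_x[!]$ and $R'\coloneqq(r'_p)_{p\to\lambda}\cdot\id_x[!]$, both of type $x\uult\lim_{p\to\lambda;\,i\to\mu_p}x^{(p)}_i$, where $!\colon(\mc P,\lambda)\to(\{*\},1)$.

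The contradiction then comes from comparing $R$ and $R'$ in two ways. Since the associated continuous map $\pi^*$ preserves composition and reindexing (Construction~\ref{con:cont-map-of-etale-space}), the lift-target of $R$ at $y$ is $R(y)=(r_p(y))_{p\to\lambda;\,i\to\mu_p}$, and similarly for $R'$; as $r_p(y)\neq r'_p(y)$ for every $p$, we get $R(y)\neq R'(y)$. On the other hand, I claim $R_\psi(u)=R'_\psi(u)$ for every $(\psi,u)$ in $\mc I$: initiality of $d$ provides $p_0$ and an arrow $f\colon(\psi_{p_0},u_{p_0})\to(\psi,u)$, and for each $q\le p_0$ the naturality of $r_q$ and $r'_q$ applied to the induced map $\restr f q\colon\psi_q\to\psi$ converts the witness equality $(r_q)_{\psi_q}(u_q)=(r'_q)_{\psi_q}(u_q)$ into $(r_q)_\psi(u)=(r'_q)_\psi(u)$; since ${\downarrow}p_0\in\lambda$ this holds $\lambda$-eventually, giving $R_\psi(u)=R'_\psi(u)$. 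Thus $R$ and $R'$ restrict to the same natural transformation on $\mc S$, so $R=R'$ by Lemma~\ref{lem:extension-from-a-dense-subcategory} (using that the point $x$ preserves small colimits), contradicting $R(y)\neq R'(y)$.

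The main obstacle is exactly the transfer carried out in the previous paragraph: the witnesses are only guaranteed to agree at the moving objects $d(p)$, whereas density requires agreement at all objects of $\mc S$ in order to collapse $R$ and $R'$ into a single ultra-arrow. This is where the initial functor out of a cofiltered poset and the ultrafilter $\lambda$ containing all principal down-sets do the real work, letting naturality propagate the individual agreements simultaneously along the whole diagram---the same device that drives the proof of Lemma~\ref{lem:every-open-is-eval}. Everything else, notably the unwinding of the pullback and of the lifts in the first paragraph, is routine.
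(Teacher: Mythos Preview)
Your proposal is correct and follows essentially the same route as the paper: reduce local injectivity of $\widetilde\pi$ at $(y,a)$ to criterion~(\ref{it:par-lift}) of Proposition~\ref{prop:parallel-lifts-and-local-injectivity}, assume for contradiction that every $(\psi,u)\in\int\restr{x}{\mc S}$ admits a witnessing parallel pair, assemble these over a cofiltered poset $\mc P$ into composite ultra-arrows $R,R'$ using an ultrafilter containing all principal down-sets, and derive the contradiction by showing $R(y)\neq R'(y)$ while naturality and Lemma~\ref{lem:extension-from-a-dense-subcategory} force $R=R'$. The paper's proof is the same argument with minor notational differences.
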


\begin{proof}
    As in the proof of Lemma~\ref{lem:every-open-is-eval}, we write $\mc{I} \coloneqq \int x|_{\mc{S}}$, we pick an initial functor $d \colon \mc{P} \to \mc{I}$ with $\mc{P}$ a cofiltered poset, and we choose an ultrafilter $\mu$ on $\mc{P}$ that contains every principal down-set of $\mc{P}$. For each $p \in \mc{P}$, write $d(p) \coloneqq (\psi_p, v_p)$, where $\psi_p \in \mc{S}$ and $v_p \in x(\psi_p)$.

    Now, towards a contradiction, suppose that no $\phi$ and $a$ with the claimed property exist. Then, in particular, for every $p \in \mc{P}$, the pullback $Y \times_X \sem{\psi_p} \to \sem{\psi_p}$ is not locally injective at $(y, v_p)$. By Proposition~\ref{prop:parallel-lifts-and-local-injectivity}(\ref{it:par-lift}), pick an ultrafamily $(x^p_i)_{i \to \nu_p}$ in $X$ and natural transformations $r^p \colon x \To \prod_{i \to \nu_p} x^p_i$ and $s^p \colon x \To \prod_{i \to \nu_p} x^p_i$ such that $r^p(v_p) = s^p(v_p)$, but $r^p(y) \neq s^p(y)$.

Consider the composite ultra-arrows
\[ r \coloneqq (r^p)_{p \to \mu} \cdot \id_x[!] \text{ and } s \coloneqq (s^p)_{p \to \mu} \cdot \id_x[!], \]
where $\id_x[!] \colon x \ult \lim_{p \to \mu} x$ is the ultra-arrow in $X$ obtained by reindexing the identity ultra-arrow $\id_x \colon x \ult \lim_{\ast \to 1} x$ along the unique $\UF$-arrow $! \colon (\mc{P}, \mu) \to (\{\ast\}, 1)$.
We will show that $r(y) \neq s(y)$, but also that $r = s$, which is a contradiction.

By composition, $r(y) = (r^p_i(y))_{p\to\mu;i\to\nu_p}$ and $s(y) = (s^p_i(y))_{p\to\mu;i\to\nu_p}$. We know that $r^p_i(y) \neq s^p_i(y)$ holds $\nu_p$-eventually for every fixed $p$, by hypothesis. Hence, $r(y) \neq s(y)$.

It remains to show that $r=s$. By Lemma~\ref{lem:extension-from-a-dense-subcategory}, it is enough to show that $r_\phi = s_\phi$ for every $\phi \in \mc S$. Let $\phi \in \mc S$ and let $u \in x(\phi)$. Since $d$ is initial, there is an arrow $h \colon (\phi_p,v_p) \to (\phi,u)$ in $\mc{I}$ for some $p \in \mc P$, which means that $h \colon \phi_p \to \phi$ satisfies $x(h)(v_p) = u$. Reusing the notation $\restr{h}{q} = h \circ d(q\leq p)$ from Lemma~\ref{lem:every-open-is-eval}, we obtain
\begin{align*}
	r(u) &= r(x(h)(v_p)) = (r^q_i(x(h)(v_p)))_{q\to\mu;i\to\nu_q} \tag*{by definition}\\
	&= (r^q_i(x(\restr{h}{q})(v_q)))_{q\to\mu;i\to\nu_q} \tag*{since $q \leq p$ holds $\mu$-eventually}\\
	&= (x^q_i(\restr{h}{q})(r^q_i(v_q)))_{q\to\mu;i\to\nu_q} \tag*{by naturality of $r^q$}\\
	&= (x^q_i(\restr{h}{q})(s^q_i(v_q)))_{q\to\mu;i\to\nu_q} \tag*{since $r^q(v_q) = s^q(v_q)$}\\
	&= s(u) \tag*{by the same argument reversed}
\end{align*}
We conclude that $r = s$, but this contradicts $r(y) \neq s(y)$.
\end{proof}
\begin{proposition}\label{prop:step-2}
    The functor $\sem{-} \colon \Ec \to \Et(X)$ is covering.
\end{proposition}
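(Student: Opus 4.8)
The plan is to exhibit $Y$ as covered by ``local pieces'', one for each of its points, to realize each piece as an evaluation $\sem{\psi}$ by means of Proposition~\ref{prop:step-1}, and then to take a coproduct of these pieces. First I would observe that, since $X$ is a set and $p$ has small fibers, the total space $Y$ is a small set of points. Fix $y \in Y$ and put $x := p(y)$. Applying Lemma~\ref{lem:local-injective-pullback} yields an object $\phi_y$ of $\Ec$ and an element $a_y \in x(\phi_y)$ such that the pullback projection $Y \times_X \sem{\phi_y} \to \sem{\phi_y}$ --- which is étale by Lemma~\ref{lem:etale-maps-and-pullbacks} --- is locally injective at $(y, a_y)$. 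By Proposition~\ref{prop:parallel-lifts-and-local-injectivity}(\ref{it:loc-inj}) I may then pick an open subspace $V_y$ containing $(y, a_y)$ on which this projection is injective.

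Next I would turn this local injectivity into an honest isomorphism with an evaluation. The restriction of the projection to $V_y$ is again étale, and its image $W_y$ is an open subspace of $\sem{\phi_y}$ by Lemma~\ref{lem:etale-open}; being étale and bijective onto $W_y$, it is an isomorphism $V_y \cong W_y$ by Lemma~\ref{lem:bijective-étale-is-isomorphism}. Since the open subspaces of $\sem{\phi_y}$ are exactly the subobjects of $\pi_{\phi_y}$ in $\Et(X)$ (Lemma~\ref{lem:open-subobjets-of-etale-space}), Proposition~\ref{prop:step-1} lets me write $W_y = \sem{\psi_y}$ for some subobject $\psi_y \rightarrowtail \phi_y$ in $\Ec$. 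Composing the inverse isomorphism $\sem{\psi_y} \cong V_y$ with the other projection $Y \times_X \sem{\phi_y} \to Y$ then produces a map $\gamma_y \colon \sem{\psi_y} \to Y$; commutativity of the pullback square gives $p \circ \gamma_y = \pi_{\psi_y}$, so $\gamma_y$ is a morphism in $\Et(X)$, and since $(y, a_y) \in V_y$ its image contains $y$.

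Finally I would assemble the pieces. Setting $\phi := \coprod_{y \in Y} \psi_y$, a small coproduct in $\Ec$, the fact that $\sem{-}$ is an infinitary-pretopos morphism gives $\sem{\phi} \cong \coprod_{y \in Y} \sem{\psi_y}$ in $\Et(X)$, and the universal property of the coproduct assembles the $\gamma_y$ into a single morphism $\alpha \colon \sem{\phi} \to Y$. Each $y \in Y$ lies in the image of $\gamma_y$, hence of $\alpha$, so $\alpha$ is surjective on points, that is, surjective on every fiber over $X$. Under the equivalence $\Et(X) \simeq \UltSp(X, \Set)$ of Theorem~\ref{thm:etale-spaces-as-continuous-maps} this amounts to a pointwise surjection of the associated fiber functors; as the forgetful functor $\UltSp(X, \Set) \to \Set^X$ reflects epimorphisms (Proposition~\ref{prop:inf-pretopos}), it follows that $\alpha$ is an epimorphism. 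Thus $\phi$ and $\alpha$ witness that $\sem{-}$ is covering.

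I expect the main obstacle to be the middle step: upgrading the purely local statement ``locally injective at $(y, a_y)$'' to a genuine isomorphism onto an evaluation $\sem{\psi_y}$ that is moreover a morphism over $X$. This requires chaining Lemmas~\ref{lem:etale-open}, \ref{lem:bijective-étale-is-isomorphism}, and~\ref{lem:open-subobjets-of-etale-space} together with Proposition~\ref{prop:step-1}, while carefully tracking that both pullback projections, and the resulting triangle over $X$, behave as expected. By contrast, the genuinely hard content --- producing a ``support'' $(\phi_y, a_y)$ that trivializes $Y$ near $y$ --- is already delivered by Lemma~\ref{lem:local-injective-pullback}, so the remainder is essentially bookkeeping with the established properties of étale maps.
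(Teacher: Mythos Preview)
Your proposal is correct and follows essentially the same route as the paper's proof: both apply Lemma~\ref{lem:local-injective-pullback} to obtain a support $(\phi_y,a_y)$, use Proposition~\ref{prop:parallel-lifts-and-local-injectivity}(\ref{it:loc-inj}) and Lemmas~\ref{lem:etale-open},~\ref{lem:bijective-étale-is-isomorphism} to identify an open $V_y$ with an open image in $\sem{\phi_y}$, invoke Proposition~\ref{prop:step-1} to realize that image as $\sem{\psi_y}$, and then take the coproduct over $y \in Y$. Your version is in fact slightly more explicit than the paper's in justifying the smallness of $Y$ and in deducing that the resulting surjection is an epimorphism via Proposition~\ref{prop:inf-pretopos}.
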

\begin{proof}
    \looseness=-1 
    Let $\pi \colon Y \to X$ be an étale space, and let $y \in Y$ be arbitrary. By Lemma~\ref{lem:local-injective-pullback}, pick an object $\phi$ of $\mc{E}$ and $a \in \pi(y)(\phi)$ such that $\widetilde{\pi}$ is locally injective at $(y, a)$. By Proposition~\ref{prop:parallel-lifts-and-local-injectivity}(\ref{it:loc-inj}),  there exists an open subset $V \subseteq Y \times_X \sem{\phi}$ such that $\restr {\widetilde{\pi}} {V} \colon V \to \sem{\phi}$ is injective, and $(y, a) \in V$. 
The subset $\widetilde{\pi}[V]$ of $\sem{\phi}$ is open by Lemma \ref{lem:etale-open}: so, using that $\sem{-}$ is full on subobjects (Proposition~\ref{prop:step-1}), pick a subobject $\psi_y \rightarrowtail
\phi$ in $\mc E$ such that $\sem{\psi_y} = \widetilde{\pi}[V]$ as subspaces of $\eval{\phi}$.
By Lemma \ref{lem:bijective-étale-is-isomorphism}, $\restr{\widetilde{\pi}}V \colon V \to
\sem{\psi_y}$ is an isomorphism in $\UltSp$. Define $\alpha_y \colon \eval{\psi_y}
\to Y$ to be the composite
\[ \begin{tikzcd}
    {\eval{\psi_y}} &[20pt] V & {Y \times_X \eval{\phi}} & Y.
\arrow["{(\restr{\widetilde{\pi}}V)^{-1}}", from=1-1, to=1-2]
\arrow[hook, from=1-2, to=1-3]
\arrow["\widetilde{\pi_\phi}", from=1-3, to=1-4]
\end{tikzcd} \]
Note that $\pi \circ \alpha_y = \restr{\pi_\phi}{\eval{\psi_y}} \circ \restr{\widetilde{\pi}}{V} \circ
(\restr {\widetilde{\pi}} V)^{-1} = \restr{\pi_\phi}{\eval{\psi_y}} = \pi_{\psi_y}$, so that $\alpha$ is a morphism from 
the étale space $\pi_{\psi_y}$ to the étale space $\pi$.
Moreover, $\alpha_y(\widetilde{\pi}(y,a)) = \widetilde{\pi_\phi}(y,a) = y$, so $y$ lies in the image of $\alpha$.

Thus, we may choose, for every $y \in Y$, an object $\psi_y$ of $\mc{E}$ and a morphism of étale spaces $\alpha_y \colon \pi_{\psi_y} \to \pi$ whose image contains $y$. The coproduct $\psi \coloneqq \sum_{y \in Y} \psi_y$ is an object of $\mc{E}$, and we have a surjective morphism $\alpha \colon \sem{\psi} \cong \sum_{y \in Y} \sem{\psi_y} \onto \pi$, obtained as the coproduct of the morphisms $\alpha_y$.
\end{proof}

\section{Conclusion}\label{sec:conclusions}

In this paper, we have shown how the canonical ultraconvergence structure on a separating set of points of a topos allows us to recover the topos as a category of étale spaces. Although the theorem is a direct extension of Lurie's \cite{lurie} reconstruction theorem for coherent toposes, our proof is structurally closer to Makkai's \cite{makkai} proof of his reconstruction theorem for pretoposes via ultracategories, which it generalizes and simplifies. As highlighted by the proof of the key \Cref{prop:parallel-lifts-and-local-injectivity}, our proof requires the fixed set of points to be, indeed, a \emph{set} rather than a proper class. However, as also noted in \cite[Thm.~8.3]{saadia2025}, a simple argument allows one to lift this to a reconstruction theorem for toposes with enough points, presenting such a topos as the category of étale spaces over its (large) ultraconvergence space of points.

\begin{corollary}\label{cor:reconstruction}
Let $\Ec$ be a topos with enough points. Then, the functor $\sem{-} \colon
\Ec \to \Et(\pt (\Ec))$ is an equivalence of categories.
\begin{proof}
Let $\mc J$ be the filtered posetal category of separating sets of points of $\Ec$, ordered by
inclusion. Then, $\pt (\Ec)$ is the pseudocolimit of the diagram defined by
$\mc J$ in $\CAT$, and hence the ultraconvergence space $\pt (\Ec)$ is the
pseudocolimit of the diagram defined by $\mc J$ in $\UltSp$ with the canonical
ultraconvergence structures. By Theorem \ref{thm:main-theorem}, for each $X \in
\mc J$ we have an equivalence $\sem{-} \colon \Ec \to \UltSp(X,\Set)$, from which:
\[ \UltSp ( \pt ( \Ec) , \Set ) \simeq\UltSp ( \colim_{X \in \mc
J}\pt ( \Ec) , \Set ) \simeq \lim_{X\in \opp{\mc J} } \UltSp ( X, \Set) \simeq
\lim_{X\in \opp{\mc J} } \Ec \simeq \Ec , \]
making $\sem{-} \colon \Ec \to \UltSp(\pt(\Ec), \Set)$ an equivalence.
\end{proof}
\end{corollary}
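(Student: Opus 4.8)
The plan is to reduce the large-point-class case to Theorem~\ref{thm:main-theorem}, which applies only to a separating \emph{set}, by exhibiting $\pt(\Ec)$ as a filtered colimit of its separating subsets and transporting the resulting family of equivalences through the contravariant hom-2-functor $\UltSp(-,\Set)$.

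First I would introduce the poset $\mc{J}$ of separating sets of points of $\Ec$, ordered by inclusion. Since $\Ec$ has enough points, $\mc{J}$ is nonempty, and it is filtered: the union of two separating sets is again separating, and any given point can be adjoined to a separating set without destroying separation. I would then argue that $\pt(\Ec)$, as an ultraconvergence space with its canonical structure, is the filtered pseudocolimit in $\UltSp$ of the diagram $\mc{J}\to\UltSp$, $X\mapsto X$. Concretely, every point, every morphism of points, and every ultra-arrow of $\pt(\Ec)$ already lives in the structure of some $X\in\mc{J}$: an ultra-arrow $x \ult \lim_{i\to\mu} y_i$ mentions only the point $x$ and the points $y_i$, so adjoining these to any chosen separating set produces a separating set $X$ in which the ultra-arrow is defined, and the canonical structure on $\pt(\Ec)$ restricts to the canonical structure on each such $X$.

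Next I would use that $\UltSp(-,\Set)$ is a contravariant 2-functor, hence carries the pseudocolimit $\pt(\Ec)=\colim_{X\in\mc{J}} X$ to the pseudolimit $\UltSp(\pt(\Ec),\Set)\simeq\lim_{X\in\opp{\mc{J}}}\UltSp(X,\Set)$. Theorem~\ref{thm:main-theorem}, together with Theorem~\ref{thm:etale-spaces-as-continuous-maps}, then provides for each $X\in\mc{J}$ an equivalence $\sem{-}\colon\Ec\to\UltSp(X,\Set)$; crucially, these equivalences are compatible with the transition (restriction) functors of the diagram, since restricting the étale space $\sem{\phi}$ from a larger separating set to a smaller one returns $\sem{\phi}$ again. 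Thus the diagram $X\mapsto\UltSp(X,\Set)$ is pseudonaturally equivalent to the constant diagram at $\Ec$. Since $\opp{\mc{J}}$ is cofiltered, and in particular connected, the pseudolimit of a constant diagram over it is simply its value, so $\lim_{X\in\opp{\mc{J}}}\UltSp(X,\Set)\simeq\Ec$; chasing $\sem{-}$ through these equivalences then identifies it with the desired equivalence $\Ec\simeq\UltSp(\pt(\Ec),\Set)\simeq\Et(\pt(\Ec))$.

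I expect the main obstacle to be the first step: carefully justifying that $\pt(\Ec)$ \emph{with its canonical ultraconvergence structure} is the filtered pseudocolimit in $\UltSp$, and not merely that its underlying category is a filtered colimit in $\CAT$. One must check that filtered pseudocolimits in $\UltSp$ are computed on underlying classes of points, and that both the continuity data of maps out of $\pt(\Ec)$ and its ultra-arrows are detected at finite stages — the latter relying precisely on the observation above that each ultra-arrow, though indexed by a possibly infinite ultrafilter, refers to only a set of points that can be absorbed into a single $X\in\mc{J}$. The compatibility of the equivalences $\sem{-}$ with the restriction functors, needed to collapse the limit to $\Ec$, is then a routine verification.
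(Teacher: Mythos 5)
Your proposal is correct and follows essentially the same route as the paper's proof: present $\pt(\Ec)$ as the filtered pseudocolimit in $\UltSp$ of the poset $\mc J$ of separating sets, apply $\UltSp(-,\Set)$ to turn it into a pseudolimit, and use Theorem~\ref{thm:main-theorem} at each stage to collapse that limit to $\Ec$. In fact you spell out several points the paper leaves implicit — the filteredness of $\mc J$, the absorption of any ultra-arrow (which mentions only a set of points) into a single separating set, and the compatibility of the equivalences $\sem{-}$ with the restriction functors needed to identify the limit with $\Ec$.
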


\begin{example}\label{ex:alexandroff-presheaf-type}
      Building on Examples~\ref{ex:ultraconvergence-space-set} and \ref{ex:ultraconvergence-space-modT}, note that the object classifier $\Set[\mathbb O]$ is equivalent to $\Et(\Set) \simeq \UltSp(\Set,\Set)$. It is well known that $\Set[\mathbb O]$ is also equivalent to $[\cat{Fin},\Set]$ where $\cat{Fin}$ is the category of finite sets \cite[Sec.~D3.2]{elephant}. This can be understood in terms of ultraconvergence spaces as follows: the finite sets form a separating class of models of $\Set[\mathbb O]$, hence $\Set[\mathbb O]$ is equivalent to $\UltSp(\cat{Fin},\Set)$. But since $\cat{Fin}$ is Alexandroff (Construction~\ref{con:alexandroff}), the continuous maps $\cat{Fin}\to\Set$ are just the functors $\cat{Fin}\to\Set$. The same story can be told about any theory of presheaf type \cite{BekTheoriesPresheafType2004}, which are dual to the (Cauchy-complete) Alexandroff ultraconvergence spaces.
\end{example}

As in the coherent case, we can also rephrase our result in terms of $2$-fully-faithfulness.
\begin{corollary}\label{cor:adjunction}
      The 2-category $\mathbf{Topos}_{wep}$ of toposes with enough points embeds 2-fully-faithfully into the 2-category $\mathbf{UltSp}$ of ultraconvergence spaces.
      \begin{proof}
            Let $\mathbf{UltSp}_0$ be the full 2-subcategory of $\mathbf{UltSp}$ spanned by ultraconvergence spaces $X$ such that $\UltSp(X,\Set)$ is a topos, and note that $\Set$ lies in $\mathbf{UltSp}_0$ as  $\UltSp(\Set,\Set)$ is equivalent to the topos $\Set[\mathbb O]$. Seen as a dualizing object, $\Set$ yields a 2-adjunction
            \[ \begin{tikzcd}
                  {\mathbf{Topos}} && {\UltSp_0}
                  \arrow[""{name=0, anchor=center, inner sep=0}, "{\pt{(-)}}"', curve={height=18pt}, from=1-1, to=1-3]
                  \arrow[""{name=1, anchor=center, inner sep=0}, "{\UltSp(-, \Set)}"', curve={height=18pt}, from=1-3, to=1-1]
                  \arrow["\dashv"{anchor=center, rotate=-90}, draw=none, from=1, to=0]
            \end{tikzcd} \]
            whose counit, at a topos $\Ec$, is given by (a geometric morphism whose inverse image is) the functor $\eval{-} \colon \Ec \to \UltSp(\pt (\Ec), \Set)$. By Theorem \ref{thm:main-theorem}, this is an equivalence for each topos with enough points, meaning that the 2-adjunction induces a 2-fully-faithful embedding $\mathbf{Topos}_{wep} \hookrightarrow \UltSp_0$. In particular, $\mathbf{Topos}_{wep}$ embeds 2-fully-faithfully into $\mathbf{UltSp}$.
      \end{proof}
\end{corollary}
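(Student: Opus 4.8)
The plan is to realise the embedding as the 2-functor $\pt(-)$ sending a topos to its ultraconvergence space of points, and to deduce its 2-fully-faithfulness from a dual adjunction governed by the object $\Set$, which plays a double role: it is the terminal topos, so that $\pt(\Ec) = \mathbf{Topos}(\Set, \Ec)$, and it is an ultraconvergence space (Example~\ref{ex:ultraconvergence-space-set}). First I would carve out the full sub-2-category $\UltSp_0 \subseteq \UltSp$ of those spaces $X$ for which $\UltSp(X,\Set)$ is a topos; by Example~\ref{ex:alexandroff-presheaf-type}, $\Set$ itself lies in $\UltSp_0$ since $\UltSp(\Set,\Set) \simeq \Et(\Set) \simeq \Set[\mathbb O]$. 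The candidate adjoint pair is then $\UltSp(-,\Set) \colon \UltSp_0 \to \mathbf{Topos}$ together with $\pt(-) \colon \mathbf{Topos} \to \UltSp_0$.

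Next I would set up the 2-adjunction itself. Both assignments are covariant with respect to the geometric-morphism direction: a geometric morphism acts on points by composition, while a continuous map $f \colon X \to Y$ induces by precomposition a functor $\UltSp(Y,\Set) \to \UltSp(X,\Set)$ that preserves pointwise finite limits and colimits, hence is the inverse image of a geometric morphism $\UltSp(X,\Set) \to \UltSp(Y,\Set)$. The heart of the matter is a pseudonatural equivalence of hom-categories
\[ \mathbf{Topos}\bigl(\UltSp(X,\Set),\, \Ec\bigr) \;\simeq\; \UltSp\bigl(X,\, \pt(\Ec)\bigr), \]
natural in $X \in \UltSp_0$ and $\Ec \in \mathbf{Topos}$, exhibiting $\UltSp(-,\Set)$ as a left 2-adjoint to $\pt(-)$: a geometric morphism with inverse image $g^{*} \colon \Ec \to \UltSp(X,\Set)$ corresponds to the continuous map $x \mapsto (\phi \mapsto g^{*}(\phi)(x))$, with inverse sending a continuous map $c \colon X \to \pt(\Ec)$ to the evaluation-along-$c$ functor $\phi \mapsto (x \mapsto c(x)(\phi))$. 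Taking $X = \pt(\Ec)$ and tracing the identity continuous map then identifies the counit at $\Ec$ with the geometric morphism $\UltSp(\pt(\Ec),\Set) \to \Ec$ whose inverse image is the evaluation functor $\eval{-} \colon \Ec \to \UltSp(\pt(\Ec),\Set)$.

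Finally I would invoke the general principle that a right 2-adjoint restricts to a 2-fully-faithful functor on the full sub-2-category of objects at which the counit is an equivalence. Here the right adjoint is $\pt(-)$, and by the reconstruction theorem (Corollary~\ref{cor:reconstruction}) the counit $\eval{-}$ is an equivalence exactly when $\Ec$ has enough points. Thus $\pt(-)$ restricts to a 2-fully-faithful embedding $\mathbf{Topos}_{wep} \hookrightarrow \UltSp_0$, and composing with the inclusion $\UltSp_0 \hookrightarrow \UltSp$, which is 2-fully-faithful because $\UltSp_0$ is full, yields the statement. Concretely, for $\Ec, \mc F$ with enough points the equivalence on hom-categories factors as
\[ \mathbf{Topos}(\Ec, \mc F) \;\simeq\; \mathbf{Topos}\bigl(\UltSp(\pt(\Ec),\Set),\, \mc F\bigr) \;\simeq\; \UltSp\bigl(\pt(\Ec),\, \pt(\mc F)\bigr), \]
the first equivalence being reconstruction for $\Ec$ and the second the adjunction.

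I expect the main obstacle to be the two-dimensional bookkeeping of the adjunction rather than its underlying bijection: one must verify that the hom-category equivalence above is pseudonatural in both variables and that the triangle identities hold up to coherent invertible modification, and check that $\UltSp_0$ is genuinely closed under the relevant structure so that $\pt(-)$ and $\UltSp(-,\Set)$ land in the stated 2-categories. The one-dimensional content, by contrast, follows formally once $\Set$ is recognised as a dualizing object and the reconstruction theorem is in hand.
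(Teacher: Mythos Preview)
Your proposal is correct and follows essentially the same route as the paper: carve out $\UltSp_0$, set up the dual 2-adjunction induced by $\Set$ as dualizing object, identify the counit with $\eval{-}$, and conclude 2-fully-faithfulness from the counit being an equivalence on toposes with enough points. One small point where you are actually more precise than the paper is in citing Corollary~\ref{cor:reconstruction} rather than Theorem~\ref{thm:main-theorem}, since the counit is taken at the possibly large $\pt(\Ec)$ rather than at a separating \emph{set}.
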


\begin{remark}
\looseness=-1
From a logical point of view, the previous corollaries can be interpreted as a \emph{strong conceptual completeness} theorem, in the sense of Makkai \cite{makkai,makkai-strong-cc-fol}, for geometric logic. Despite its name, note that this does not imply a \emph{conceptual completeness} theorem for geometric logic: that is, it is not true that a geometric morphism $\mc E \to \mc F$ inducing an equivalence of categories $\pt (\Ec) \to \pt(\mc F)$ is an equivalence itself, not even for toposes with enough points. 

In other words, conceptual completeness is a \emph{conservativity} property over $\CAT$, in the 2-dimensional sense of reflecting \emph{equivalences} rather than isomorphisms, for an appropriate 2-functor taking categories of models: in the setting of coherent logic, this was originally proved in \cite[Chap.~7]{makkai-reyes}, identifying coherent theories with pretoposes. Instead, \emph{strong} conceptual completeness is a \emph{reconstruction} result of syntax from semantics, and it typically requires considering additional structure on the categories of models: in the setting of coherent logic, this is the main result of \cite{makkai,lurie}, allowed by ultrastructure. A general definition of the latter kind, for fragments of geometric logic, is proposed in \cite[\S 6.3]{diliberti2025logicconcepts2categorytopoi}, where it is simply referred to as \emph{conceptual completeness}.
\end{remark}

\begin{remark}
	Via Construction~\ref{con:alexandroff}, we can identify small categories with Alexandroff ultraconvergence spaces whose underlying category is small. By \Cref{rem:sp-and-alex} we know that the topos $\UltSp ( \mc C, \Set)$ induced by a small category $\mc C$ is simply the presheaf topos $\Et(\mc{C}) \coloneqq [\mc{C},\cSet]$. However, for small categories $\mc{C}$ and $\mc{D}$, there are in general more geometric morphisms $\Et(\mc{C}) \to \Et(\mc{D})$ than continuous maps (i.e.\ functors) $\mc{C} \to \mc{D}$. Indeed, even assuming $\mc{D}$ to be Cauchy-complete, the only geometric morphisms arising from functors  $\mc{C} \to \mc{D}$ are the \emph{essential} ones \cite[Ex.\ A4.1.4]{elephant}. In general, geometric morphisms $\Et(\mc{C}) \to \Et(\mc{D})$ correspond to functors $\mc{C} \to \pt(\Et(\mc{D}))$, where the category $\pt(\Et(\mc{D}))$ is equivalent to the \emph{Ind-completion} $\Ind(\mc{D})$ of $\mc D$ by \cite[Prop.\ 0.1]{BekTheoriesPresheafType2004}. 
    
    In other words, we can think of the Alexandroff space $\Ind(\mc{D})$ as the \emph{soberification} of a small category $\mc D$, in analogy with the topological case, since for any topos $\mc F$ we have:
	\[ \mathbf{UltSp}(\mc D ,\pt(\mc F)) \simeq \mathbf{Topos}(\Et(\mc D),\mc F) \simeq \mathbf{UltSp}(\Ind(\mc{D}) ,\pt(\mc F)) \text{.} \]
    \looseness=-1
    More generally, for a topos $\mc E$, the ultraconvergence space $\pt (\mc E)$ can be seen as the soberification of any subclass $X$ of points endowed with the canonical ultraconvergence structure. This raises the question of constructing the soberification of $X$ directly in terms of the space, without going through the topos $\mc E$. 
    We leave this for future work.
\end{remark}

\printbibliography 
 
\end{document}